%
\newcounter{myctr}


\documentclass{ws-m3as-arxiv}
\usepackage{url,subfigure}

\usepackage{lebListes}  
\noitemsep  

\usepackage{algorithm,algpseudocode}

\usepackage{color,pstcol,pstricks}
\newgray{backgroundgray}{0.90}
\definecolor{backgroundgray}{gray}{0.90}


\newcommand{\eqdef}{\stackrel{\mathrm{def}}{=}} 

\newenvironment{petit}
{\par\vspace{.5\baselineskip}\noindent\footnotesize}
{\nobreak\par\vspace{.5\baselineskip}}








\newcommand{\bp}{ 
  \small \ttfamily
 \begin{tabbing}
 aaa\=aaa\=aaa\=aaa\=aaaaaaaa \= aaaaaaaaaa\= \kill
 }
\newcommand{\ep}{\end{tabbing}\normalfont\normalsize }

\newcommand{\pro}[1]{{\ttfamily\small#1}}

\newcommand{\fref}[1]{Figure~\ref{#1}}
\newcommand{\sref}[1]{Section~\ref{#1}}
\newcommand{\cref}[1]{Chapter~\ref{#1}}
\newcommand{\coref}[1]{Corollary~\ref{#1}}
\newcommand{\eref}[1]{Eq.(\ref{#1})}

\newcommand{\thref}[1]{Theorem~\ref{#1}}
\newcommand{\pref}[1]{Proposition~\ref{#1}}

\newcommand{\calA}{\mathcal A}

\newcommand{\calC}{\mathcal C}

\newcommand{\calL}{\mathcal L}

\newcommand{\calP}{\mathcal P}

\newcommand{\calS}{\mathcal S}

\newcommand{\lp}{\left(}
\newcommand{\lb}{\left[}
\newcommand{\lc}{\left\{}

\newcommand{\rp}{\right)}
\newcommand{\rb}{\right]}
\newcommand{\rc}{\right\}}

\newcommand{\eps}{\varepsilon}

\newcommand{\abs}[1]{\left| #1 \right|}



\newcommand{\ind}[1]{1_{\{#1\}}}


\def\be{\begin{equation}}
\def\ee{\end{equation}}
\def\ben{\[}
\def\een{\]}
\def\bearn{\begin{eqnarray*}}
\def\eearn{\end{eqnarray*}}
\def\bear{\begin{eqnarray}}
\def\eear{\end{eqnarray}}
\def\barr{\begin{array}}
\def\earr{\end{array}}


\def\bmat{\left(\begin{array}}
\def\emat{\end{array}\right)}

\newcommand{\limit}[2]{\lim_{#1 \rightarrow #2}}


\newcommand{\mif}{\mathrm{\; if \; }}

\newcommand{\mfor} {\mathrm{\;  for \;  }}

\def\Reals{\mathbb{R}}

\def\Nats{\mathbb{N}}
\def\E{\mathbb{E}}
\def\P{\mathbb{P}}

\newcommand{\cro}[1]{\langle#1\rangle}   






\begin{document}

\makeatletter
\def\@biblabel#1{[#1]}
\makeatother

\markboth{G\'omez-Serrano, Graham and Le Boudec}{Bounded Confidence
Model Of Opinion Dynamics}

%
\catchline{}{}{}{}{}
%

\title{The Bounded Confidence Model Of Opinion Dynamics
}

\author{\footnotesize G\'OMEZ-SERRANO, JAVIER}

\address{Instituto de Ciencias Matem\'aticas, Consejo Superior de Investigaciones Cient\'ificas \\(ICMAT CSIC-UAM-UCM-UC3M),
Serrano 123, 28006 Madrid, Spain, javier.gomez@icmat.es \footnote{J. G\'omez-Serrano's research was done while being an exchange student at EPFL.}}

\author{\footnotesize GRAHAM, CARL}

\address{Centre de math\'ematiques appliqu\'ees, CNRS et \'Ecole Polytechnique \\
91128 Palaiseau, France,
carl@cmap.polytechnique.fr}

\author{LE BOUDEC, JEAN-YVES}

\address{EPFL, 1015 Lausanne, Switzerland\\jean-yves.leboudec@epfl.ch}



\maketitle

\begin{abstract}
The bounded confidence model of opinion dynamics, introduced by Deffuant \emph{et al}, is a 
stochastic model for the evolution of continuous-valued opinions within a finite group of peers.
We prove that, as time goes to infinity, the opinions evolve globally into 
a random set of clusters too far apart to interact, and thereafter all opinions in every cluster
converge to their barycenter.
We then prove a mean-field limit result, propagation of chaos:
as the number of peers goes to infinity 
in adequately started systems
and time is rescaled accordingly,
the opinion processes converge to i.i.d. nonlinear Markov (or McKean-Vlasov) processes;
the limit opinion processes evolves as if under the influence of opinions drawn from
its own instantaneous law, which are the unique solution of a 
nonlinear integro-differential equation of Kac type.
This implies that
the (random) empirical distribution processes converges to this (deterministic) solution.
We then prove that, as time goes to infinity, this solution converges to a law concentrated on isolated opinions too far apart to interact,
and identify sufficient conditions for the limit not to depend on the
initial condition, and to be concentrated at a single opinion.
Finally, we prove that if the equation has an initial condition with a density,
then its solution has a density at all times, develop
a numerical scheme for the corresponding functional equation,
and show numerically that bifurcations may occur.
\end{abstract}

\keywords{Social networks; reputation; opinion; mean-field limit;
propagation of chaos; nonlinear integro-differential equation;
kinetic equation; numerical experiments.}

\ccode{MSC2010: 91D30,60K35,45G10,37M99}


\section{Introduction}

Some models about opinion dynamics (or belief or gossip propagation, etc.) are based on binary
values,\cite{follmer1974random,arthur1994increasing,orlean1995bayesian,latane1997self,weisbuch2002dynamical,sznajd2000opinion}
and often lead to attractors that display uniformity of opinions.
These models are not valid for
scenarios such as the social network of truck drivers interested
in the quality of food of a highway restaurant or the critics'
ratings about the new opening movies, for which it is required to have
a continuous spectrum of opinions, as is also the case in politics when people
are positioned on a scale going from extreme left-wing to right-wing opinions.\cite{1957}

The bounded confidence model introduced by Deffuant \emph{et al}.\cite{deffuant2000mixing}
is a popular model for such scenarios.
Peers have $[0,1]$-valued opinions;
repeatedly in discrete steps, two peers are sampled, and if their opinions differ by at most a \emph{deviation threshold} then both move closer, in  barycentric fashion
governed by a \emph{confidence factor}. These parameters are the same for all peers, and the system is in binary mean-field interaction.
The model has been studied and generalized, notably
to other interaction graphs than the fully-connected one,\cite{weisbuch2003interacting,weisbuch2004bounded,weisbuch2005persuasion,dittmer2001consensus,hegselmann2002opinion}
to vector-valued opinions,\cite{neau2000revisions,weisbuch2003interacting}
and to peer-dependant deviation thresholds\cite{weisbuch2005persuasion}.

Reputation systems have lately emerged due to the necessity to
measure trust about users while doing transactions over the internet;
popular examples can be found in
e-Bay\cite{resnick2002trust} or Bizrate.\cite{wang2004design}
Some models for trust evolution and the potential effects of groups
of liars attacking the system can be seen as a
generalization of the bounded confidence model,
in particular
when there are no liars nor direct observations
and the system evolves only by interaction between the peers.\cite{le2007generic,buchegger2004robust}
The ``Rendez-vous" model used by Blondel et al \cite{blondel2r} has qualitative resemblance to the model used in this paper; like ours, it converges to a finite number of clusters in finite time for the finite $N$ case. However, the interaction model is different, and our techniques (based on convexity and conservation of mean, see \pref{prop-convex}) do not seem to apply to this model.

The mean-field approximation method for large interacting systems
has a very long history. Its heuristic and rigorous use started in statistical
physics\cite{sznitman1991topics,graham1997stochastic,meleard1996asymptotic,bellomo2000modeling}
and entered many other fields, notably communication
networks,\cite{whitt1985blocking,kelly1991loss,graham1993propagation,graham1994chaos,graham2000chaoticity,graham2000kinetic,1555363}
TCP connections,\cite{baccelli2002mean,tinnakornsrisuphap2003limit,baccelli2006http,graham2009interacting}
robot swarms,\cite{martinoli2004modeling} transportation
systems,\cite{afanassieva1997models} and online reputation
systems\cite{le2007generic,mundinger2005analysis,mundinger-impact} in which is particularly appealing
since the number of users may be very large (over 400 million for
Facebook\cite{facebookstatistics}).



This paper provides some rigorous proofs of old and new results
on the Deffuant \emph{et al}.\cite{deffuant2000mixing} model, which has been studied intensively, but essentially by heuristic arguments and simulations.
Notably,
justifying the validity of the mean-field approach is not a simple matter,
and classical methods do not apply,
as seen below.

We prove that as time increases to infinity,
opinions eventually group after some random finite time into a constant number of clusters,
which are separated by more than the deviation threshold, and cannot influence one another.
Thereafter, all opinions within every such cluster converge to their barycenter.
The limit distribution of opinions is thus of a degenerate form, in which there are only a
small number of fixed opinions which differ too much to influence each
other, called a ``partial consensus"; when it is constituted
of one single opinion, it is called a ``total consensus''. Note that the limit distribution is itself random, i.e. different sample runs of the same model with same initial conditions always converge, but perhaps to different limiting distributions of opinions.

We then prove a mean-field limit result, called ``propagation of chaos'' in statistical mechanics:
if the number of peers goes to infinity, the systems are adequately started,
and time is rescaled accordingly, then the processes of the opinions converge in law to i.i.d. processes.
Each of these is a so-called nonlinear Markov (or McKean-Vlasov) process, corresponding to
an opinion evolving under the influence of opinions drawn
independently from the marginal law of the opinion process itself, at a rate which
is the limit of that at which a given peer in the finite system encounters its peers.
Moreover, these marginals are the unique solution of an adequately started
nonlinear integro-differential equation.

This implies a law of large numbers: the empirical measures of the interacting processes
converge to the law of the nonlinear Markov process.
Such process level results imply results for the marginal laws,
but they are much stronger: limits are derived for functionals of the sample paths,
such as hitting times or extrema. In particular, a functional law of large numbers holds for the marginal processes of these empirical measures, with limit the solution of the
integro-differential equation.

The probabilistic structure of this limit equation is similar to that of kinetic equations
such as the cutoff spatially-homogeneous
Boltzmann or Kac equations, classically used in statistical mechanics to describe
the limit of certain particle systems with binary interaction.
Under quite general assumptions, satisfied here, it has long been known that
it is well-posed in the space of probability laws,
and that if the initial law has a density, then the solution has a density at all times
satisfying a functional formulation of this equation.

\begin{remark}
\label{r-smf}
There are two main difficulties in the propagation of chaos proof:
\begin{enumerate}
\item
\label{dif1}
the interaction is \emph{binary} mean-field, since two opinions change simultaneously,
\item
\label{dif2}
the indicator functions related to
the deviation threshold are discontinuous.
\end{enumerate}
A  system in which only one opinion would change at a time would be
in \emph{simple} mean-field interaction, and one could write equations for the opinions
in almost closed form, which could be passed to the limit in various classical ways.
This cannot be done for binary interaction, in which there is much more feedback between peers;
moreover, this would require continuous coefficients.
See \sref{s-diffmfpr} for details.
\end{remark}

Such difficulties have been solved before\cite{sznitman1991topics}.
In order to adapt results obtained for a class of interacting systems
inspired by communication network models\cite{graham1993propagation,graham1994chaos}
using stochastic coupling techniques,
which can be applied to various Boltzmann and Kac models\cite{graham1997stochastic,desvillettes1999probabilistic},
we introduce an intermediate \emph{auxiliary} system,
a continuous-time variant of the discrete-time model of Deffuant \emph{et al}.\cite{deffuant2000mixing} interacting
at Poisson instants, which itself constitutes a relevant opinion model.

For this auxiliary system,
we prove propagation of chaos, in total variation norm with estimates on any finite time interval.
We then control the distance between this auxiliary system and the
Deffuant \emph{et al}. model, and prove propagation of chaos for a weaker topology,
but still at the process level and allowing discontinuous measurable
dependence on the $[0,1]$-values taken by the opinions.

The method can be easily generalized, for instance to
vector-valued opinions, or to randomized interactions with a joint law
governing whether one or both peers change opinion and by how much; for instance,
choosing uniformly at random one peer to change opinion and leaving the other fixed
would lead to a simple mean-field interacting model,
and the limit model would be slowed down by a factor two.

To the best of our knowledge, this is the first rigorous mean-field limit result
for this model.
Similar integro-differential equations were used without formal justification
before,\cite{deffuant2000mixing,ben2003bifurcations,lorenz2007continuous}
and appear to be incorrect by a factor~2 (perhaps by disregarding that two peers change opinion at once), which illustrates the
interest of deriving the macroscopic equation from a
microscopic description, as we do here.

We thank a referee to have brought to our attention the preprint
Como-Fagnani.\cite{como2010scaling}
It contains results for the marginal laws of a continuous-time
variant of the model with two major simplifications:
the interaction is simple mean-field (only one opinion changes at a time),
and the indicator functions are replaced by Lipschitz-continuous functions; this
removes difficulties (\ref{dif1}) and (\ref{dif2})
in Remark~\ref{r-smf}, to which its techniques do not apply.
We have overcome these difficulties in the precise model of Deffuant \emph{et al}.,\cite{deffuant2000mixing}
and have given much stronger results,
for process laws in total variation norm and not for marginal laws in weak topologies.

One expects that the long-time behavior for the
mean-field limit should be highly related to the behavior for an large number of peers of
the long-time limit of the finite model.
This heuristic inversion of long-time and large-number limits can be sometimes rigorously justified, for instance by a compactness-uniqueness
method,\cite{whitt1985blocking,kelly1991loss,graham2000chaoticity,graham2000kinetic}
but here the limit nonlinear integro-differential equation may have multiple equilibria, and
formal proof would constitute a formidable task.

We prove that the long-time behavior of the solution of the limit integro-differential equation is similar
to that of the
model with finitely many peers: it converges  to a
partial consensus constituted of a small number of fixed opinions
which differ too much to influence each
other.

We then develop a numerical method for the limit equation, and use it
to explore the properties of the model. We observe phase transitions
with respect to the number of limit opinions,
while varying the deviation threshold for some fixed
initial condition.
We model the scenario of a company fusion, dividing the
workers into an ``undecided'' group and two ``extremist'' factions, and obtain that
having 20\% of the workers ``undecided'' is enough to achieve consensus between all.

Last, we
establish a bound on the deviation threshold, allowing
to determine if there is total consensus or not, under the
assumption of symmetric initial conditions.

In the sequel, \sref{sec-model} describes the finite model,
and \sref{sec-finite} studies some of its long time properties.
\sref{sec-convergence} rigorously derives the mean-field limit,
\sref{sec-infinite} studies some of its long time properties,
and \sref{sec-num} is devoted to numerical results. The appendix contains some
probabilistic complements in Section~A, the details of the algorithm in Section~B and all proofs in Section~C.
%
%
%



\section{Interacting system model, and reduced descriptions}
\label{sec-model}

The model for $N\ge2$ interacting peers introduced by Deffuant \emph{et al}.\cite{deffuant2000mixing}
is as follows. The random variable (r.v.)
$X^N_i(k)$ with values in $[0,1]$ denotes the reputation record
kept at peer $i\in\{1,\ldots,N\}$ at time
$k\in\Nats=\{0,1,\dots\}$, representing its opinion (or belief, etc.)
about a given subject, the same for all peers.
The discrete-time process of the states taken by the system of peers is
\[
X^N = (X^N(k),{k\in\Nats})\,,
\qquad
X^N(k) = (X_i^N(k))_{1 \le i \le N}\,,
\]
and evolves in function of the \emph{deviation threshold}
$\Delta\in (0,1]$ and the \emph{confidence factor} $w\in (0,1)$.
At each instant $k$, two peers $i$ and $j$ are selected uniformly at random without replacement,
and:
\begin{itemize}
\item if $|X_i^N(k) - X_j^N(k)| > \Delta$ then $X^N(k+1)=X^N(k)$,
the two peers' opinions being too different for mutual influence,
   \item
if $|X_i^N(k) - X_j^N(k)| \le \Delta$ then  the values of peers $i$ and $j$
are updated to
\[
\left\{
\begin{aligned}
X_i^N(k+1) &= wX_i^N(k) + (1-w)X_j^N(k)\,,
\\
X_j^N(k+1) &= wX_j^N(k) + (1-w)X_i^N(k)\,,
\end{aligned}
\right.
\]
and the values of the other peers do not change at time $k+1$, the two peers
having sufficiently close opinions to influence each other.
\end{itemize}

Small values of $\Delta$ and
large values of $w$ mean that the peers trust very much their own opinions
in comparison to the new information given by the other
interacting peer.
The extreme excluded values $\Delta=0$ or $w = 1$ correspond to peers never changing opinion,
and $w = 0$ to peers switching opinions if close enough. For $w = 1/2$, two close-enough peers
would both end up with the average of their opinions.

A reduced, or macroscopic, description of the system is given by the
\emph{empirical measure} $\Lambda^N$,
and by its marginal process $M^N = (M^N(k), {k\in\Nats})$
also called the \emph{occupancy process},
\begin{equation*}
\Lambda^N = \frac1N \sum_{i=1}^N \delta_{X^N_i}
= \frac1N \sum_{i=1}^N \delta_{(X^N_i(k), {k\in\Nats})}\,,
\qquad
M^N(k) = \frac1N \sum_{i=1}^N \delta_{X^N_i(k)}\,.
\end{equation*}
The random measure $\Lambda^N$ has samples in $\calP([0,1]^\Nats)$, the space of probability
measures
on $[0,1]^\Nats$; its projection $M^N = (M^N(k), {k\in\Nats})$, which carries much less information,
has sample paths in $\calP([0,1])^\Nats$, the space of sequences of probability measures
on $[0,1]$.
For measurable $g: [0,1]^\Nats \to \Reals$ and $h: [0,1] \to \Reals$,
\[
\cro{g,\Lambda^N} = \frac{1}{N} \sum_{i=1}^N g(X_i^N)\,,
\qquad
\cro{h,M^N(k)}= \frac{1}{N} \sum_{i=1}^N h(X_i^N(k))\,.
\]

We will also re-scale time as $t= \frac{k}{N}$, and consider in particular the
rescaled occupancy process $\tilde{M}^N = (\tilde{M}^N(t),
{t\in\Reals_+})$ given by $
 \tilde{M}^N(t) = M^N(\lfloor N t\rfloor)$, which in  \sref{sec-convergence}
 will be shown to converge
to a deterministic process $(m(t),{t\in\Reals_+})$.


\section{Long-time behavior of the finite N model}
\label{sec-finite}

We consider a fixed finite number of peers and let time $k$ go to infinity. We prove that the distribution of peer opinions $M^N(k)$ converges almost surely (a.s.) to a random distribution $M^N(\infty)$. Note that the limiting distribution $M^N(\infty)$ depends on chance as well as on the initial condition. We prove that
$M^N(\infty)$ is a combination of at most
$\left\lceil\frac{1}{\Delta}\right\rceil$ Dirac measures at points
separated by at least $\Delta$. A key observation
here is that if $h$ is any convex function then $\cro{h,M^N(k)}$ is
non-increasing in $k$. Dittmer and Krause
\cite{dittmer2001consensus,krause2000discrete} obtained similar
results, but for a deterministic model.

\begin{definition}
\label{d-partcons}
We say that  $\nu\in \calP[0,1]$
is a \emph{partial consensus with $c$ components} if $\nu =
\sum_{m=1}^{c}\alpha_m \delta_{x_m}$ with
 $x_m \in [0,1]$, $\abs{x_m-x_{m'}}
> \Delta$ for $m \neq m'$, and $\alpha_m >0$. Necessarily
$c \leq \left\lceil\frac{1}{\Delta}\right\rceil$ and $\sum_{m= 1}^{c} \alpha_m
= 1$.
If $c=1$, \emph{i.e.}, if $\nu$ is a Dirac measure, we say that
$\nu$ is a \emph{total consensus}.
\end{definition}

If $M^N(k)$ is a partial consensus, then
peers are grouped in a number of components too far apart to interact,
and
within one component all peers have the same
value.
Thus, a partial consensus is an absorbing state for
$M^N$, and \thref{discrete_convergence}
below will show that $M^N(k)$ converges a.s., as
$k\to \infty$, to one such state.

\subsection{Convexity and Moments} We start with results about convexity and moments, which are
needed to establish the convergence result and are also of
independent interest.

\begin{proposition}For any convex function $h: [0,1] \to \Reals$,
any $x,y$ and $w$ in $[0,1]$,
 \ben
 h\lp wx+(1-w)y\rp+h\lp wy+(1-w)x\rp-h(x)-h(y)\leq 0
 \een
 with equality when $h$ is strictly convex possible only if
 $x=y$ or $w =0$ or $w=1$.\label{lem-convex}\label{prop-convex}
\end{proposition}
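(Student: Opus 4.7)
The proof is a one-line consequence of the definition of convexity, applied twice. The key observation is that both $wx+(1-w)y$ and $wy+(1-w)x$ are convex combinations of $x$ and $y$, with weights $(w,1-w)$ and $(1-w,w)$ respectively, so applying the convexity inequality to each yields
\[
h\bigl(wx+(1-w)y\bigr) \le w\,h(x) + (1-w)\,h(y),
\qquad
h\bigl(wy+(1-w)x\bigr) \le (1-w)\,h(x) + w\,h(y).
\]
The plan is to add these two inequalities: the right-hand side telescopes to $h(x)+h(y)$, which gives the desired inequality directly.

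For the strict-convexity claim, I would argue by contraposition. Recall that if $h$ is strictly convex on $[0,1]$, then for any $\alpha\in(0,1)$ and $u\neq v$ in $[0,1]$ one has $h(\alpha u+(1-\alpha)v) < \alpha h(u)+(1-\alpha)h(v)$. Thus if $x\neq y$ and $w\in(0,1)$, both displayed inequalities above are \emph{strict}, so their sum is strict, ruling out equality in the statement. This exhausts the cases: equality forces $x=y$, $w=0$, or $w=1$ (and in each of those cases equality is trivially achieved, since the pair $\{wx+(1-w)y,\,wy+(1-w)x\}$ coincides with $\{x,y\}$).

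The proof involves no obstacle worth mentioning; the only thing to be careful about is to state correctly the direction of the strict-convexity equivalence, namely that \emph{strict} inequality in Jensen for two points requires a nontrivial weight \emph{and} distinct points. No induction, no boundary subtlety (the endpoints $w\in\{0,1\}$ and $x=y$ are handled by the equality case description), and no additional assumption on $h$ beyond convexity (in particular no continuity or differentiability is needed, since convex functions on $[0,1]$ satisfy Jensen at two points by definition).
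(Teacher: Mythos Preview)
Your proof is correct and essentially identical to the paper's own argument: the paper also applies the definition of convexity to each of the two convex combinations and sums the resulting inequalities, noting strictness when $h$ is strictly convex, $x\neq y$, and $w\in(0,1)$.
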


The following corollary is immediate from the interaction structure of the model.

\begin{corollary}
If $h: [0,1] \to \Reals$ is a convex function, then $\cro{h,M^N(k)}$ is a non-increasing function of $k$
along any sample path. \label{prop-fn-conv}
\end{corollary}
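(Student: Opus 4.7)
The plan is to unpack the definition of $\cro{h,M^N(k)}$ and examine what happens in one update step, using that only two peers' opinions can change at a time. Fix a sample path and a time $k$, and let $i,j$ be the pair of peers selected at step $k$. Since
\[
\cro{h,M^N(k)} = \frac{1}{N} \sum_{\ell=1}^N h(X_\ell^N(k)),
\]
and $X_\ell^N(k+1) = X_\ell^N(k)$ for every $\ell \notin \{i,j\}$, forming the difference $\cro{h,M^N(k+1)} - \cro{h,M^N(k)}$ leaves only the contributions from peers $i$ and $j$.

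I would then split into the two cases of the dynamics described in \sref{sec-model}. If $|X_i^N(k) - X_j^N(k)| > \Delta$, the state is left unchanged and trivially $\cro{h,M^N(k+1)} = \cro{h,M^N(k)}$. Otherwise, with $x = X_i^N(k)$ and $y = X_j^N(k)$, the update rule gives
\[
N\bigl(\cro{h,M^N(k+1)} - \cro{h,M^N(k)}\bigr) = h(wx + (1-w)y) + h(wy + (1-w)x) - h(x) - h(y),
\]
which is $\leq 0$ by \pref{prop-convex} applied to the convex function $h$ and the parameters $w \in (0,1)$, $x,y \in [0,1]$. In both cases the increment is nonpositive, so $k \mapsto \cro{h,M^N(k)}$ is non-increasing along the sample path.

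There is no real obstacle here; the entire content of the corollary is that the only modification to $M^N$ at each step is the replacement of two point masses by the two point masses arising from the barycentric mixing, and \pref{prop-convex} is precisely the statement that such a replacement cannot increase a convex functional. I would therefore keep the proof to a few lines, just verifying that the pair-only update structure lets us reduce the global statement to the two-point inequality already established.
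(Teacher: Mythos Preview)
Your proof is correct and is exactly the argument the paper has in mind: the paper simply states that the corollary is ``immediate from the interaction structure of the model,'' and your write-up makes explicit the two-line computation reducing the increment of $\cro{h,M^N(k)}$ to the two-point inequality of \pref{prop-convex}.
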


Applying this to $h(x)=x$, $h(x)=-x$ and $h(x)=x^n$ yields that in any sample path,
the first moment is constant and other moments are non-increasing with time.

\begin{corollary}
\label{cor-fin-mom}
For $n =1,2,\dots$ and $k\in\Nats$, let $\mu_n^N(k) =
\frac{1}{N}\sum_{i=1}^{N}X_i^N(k)^n$ denote the $n$-th moment
of $M^N(k)$, and let $\sigma^N(k)$ be the standard deviation given by
$\sigma^N(k)^2= \mu^N_2(k)-\mu^N_1(k)^2 $.
Then:
\begin{enumerate}
  \item The mean $\mu_1^N(k)$ is stationary in $k$, \emph{i.e.}, $\mu_1^N(k)=\mu_1^N(0)$
  for all $k$.
  \item The moments and the standard deviation are
      non-increasing in $k$: if $k \leq k'$
      then $\mu_n^N(k) \geq \mu_n^N(k')$ and $\sigma^N(k)
      \geq \sigma^N(k')$.
\end{enumerate}\label{discrete_moments}\label{discrete_variance_drop}
\end{corollary}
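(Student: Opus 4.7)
The plan is to deduce this corollary directly from Corollary \ref{prop-fn-conv} by testing $\cro{h, M^N(k)}$ against a small family of convex functions, then combining the resulting monotonicities.

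For part (1), the key trick is that an affine function is convex and its negation is also convex. Applying Corollary \ref{prop-fn-conv} to $h(x) = x$ gives that $\mu_1^N(k) = \cro{x, M^N(k)}$ is non-increasing in $k$. Applying it to $h(x) = -x$ gives that $-\mu_1^N(k)$ is non-increasing as well, i.e. $\mu_1^N(k)$ is non-decreasing. The two constraints together force $\mu_1^N(k) = \mu_1^N(0)$ for all $k$, which is exactly the claimed stationarity of the mean. (One can also read this off directly from the interaction rule, since $wx + (1-w)y + wy + (1-w)x = x+y$, so the sum of the two interacting opinions is preserved at each step; the convexity argument just makes this automatic from the lemma already established.)

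For part (2), for each $n \ge 2$ the map $x \mapsto x^n$ is convex on $[0,1]$ (its second derivative $n(n-1)x^{n-2}$ is nonnegative), so Corollary \ref{prop-fn-conv} applied to $h(x) = x^n$ yields that $\mu_n^N(k) = \cro{x^n, M^N(k)}$ is non-increasing in $k$. The case $n = 1$ is covered by part (1) (constant sequences are non-increasing). Finally, for the standard deviation, write $\sigma^N(k)^2 = \mu_2^N(k) - \mu_1^N(k)^2$: part (1) makes the second term constant in $k$, while we have just shown that $\mu_2^N(k)$ is non-increasing; hence $\sigma^N(k)^2$ is non-increasing, and taking the square root (a monotone operation on $\Reals_+$) gives the same for $\sigma^N(k)$.

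There is essentially no obstacle here, since Proposition \ref{prop-convex} has already done the real work: the corollary is just the observation that $x$, $-x$ and $x^n$ are convex on $[0,1]$. The only minor care is to notice that the mean requires applying the lemma to both $x$ and $-x$ to upgrade ``non-increasing'' to ``constant'', and that the assertion about $\sigma^N$ uses both parts of the already-proven moment statement rather than being an independent convexity computation.
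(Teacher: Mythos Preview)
Your proof is correct and follows exactly the approach sketched in the paper: apply Corollary~\ref{prop-fn-conv} to $h(x)=x$, $h(x)=-x$, and $h(x)=x^n$, then combine the stationarity of $\mu_1^N$ with the monotonicity of $\mu_2^N$ for the variance. The paper gives only the one-line hint preceding the statement, and your write-up fills in precisely those details.
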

%
%
%
%

Moreover, stationarity of moments is equivalent to
reaching partial consensus:

\begin{proposition}
\label{pro-partcons}
If $M^N(k)$ is a partial consensus, then $\mu_n^N(k')=\mu_n^N(k)$ for all $n\ge1$ and $k'\geq k$.
Conversely, if for some $n \geq 2$ there exists a (random) instant $k$ such that $\mu_n^N(k')=\mu_n^N(k)$ for all $k'\geq k$,
then  $M^N(k')=M^N(k)$ for all $k'\geq k$
and $M^N(k)$ is a partial consensus, almost surely.
\end{proposition}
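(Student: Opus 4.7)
The plan is to treat the two directions separately. The forward direction is immediate from the interaction rule, while the converse splits into a deterministic step (strict convexity forces configuration stationarity) and a probabilistic step (Borel-Cantelli upgrades this to the partial consensus structure).

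For the forward direction, I would simply observe that in a partial consensus, any pair of peers either has equal values (both in the same cluster, in which case interaction leaves them unchanged since $w x + (1-w) x = x$) or values more than $\Delta$ apart (distinct clusters, no interaction at all). Hence $M^N$ is absorbing at partial consensus states, and by induction every moment is constant from time $k$ onwards.

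For the converse, the first step passes from moment stationarity to configuration stationarity. The function $h_n(x) = x^n$ is strictly convex on $[0,1]$ for $n \geq 2$, so by \pref{prop-convex} applied with $w \in (0,1)$, any interaction between peers with distinct values within threshold $\Delta$ strictly decreases $\mu_n^N$. The hypothesis $\mu_n^N(k') = \mu_n^N(k)$ for all $k' \geq k$ therefore forces the randomly selected pair $(I_{k'}, J_{k'})$ at every step $k' \geq k$ to be either too far apart (so there is no interaction) or already equal (so interaction has no effect). In both cases $M^N(k'+1) = M^N(k')$ along the sample path, and by induction $M^N(k') = M^N(k)$ for all $k' \geq k$.

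The second step deduces partial consensus almost surely, by a contradiction argument carried out at each fixed $k$ and then a countable union over $k$. Suppose $M^N(k) = x$ is not a partial consensus: then there exist indices $i^{*}, j^{*}$ with $x_{i^{*}} \neq x_{j^{*}}$ and $|x_{i^{*}} - x_{j^{*}}| \leq \Delta$. Since the pair selections are i.i.d. uniform on unordered pairs and independent of the state process, the events $\{\{I_{k'}, J_{k'}\} = \{i^{*}, j^{*}\}\}$ for $k' \geq k$ are independent with common probability $2/(N(N-1)) > 0$; by the second Borel-Cantelli lemma, this pair is selected at some $k' \geq k$ almost surely, and on the event of stationarity just derived the selection would in fact move $x_{i^{*}}$ and $x_{j^{*}}$ to a strict barycenter (as $w \in (0,1)$), contradicting $M^N(k'+1) = M^N(k')$. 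The main obstacle is precisely this almost-sure clause: deterministic reasoning alone cannot exclude that the random selections fortuitously avoid every ``bad'' pair forever, so one genuinely needs the independence and uniform positive selection probabilities, which Borel-Cantelli packages cleanly.
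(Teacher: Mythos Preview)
Your proof is correct and follows essentially the same route as the paper's: the forward direction via the absorbing property of partial consensus, and the converse by first using strict convexity of $x\mapsto x^n$ (via \pref{prop-convex}) to force every selected pair after time $k$ to be either too far apart or equal, hence full configuration stationarity, and then arguing that a non-partial-consensus configuration contains a ``bad'' pair which is almost surely selected eventually. You are a bit more explicit than the paper in invoking Borel--Cantelli and in handling the random time $k$ via a countable union, but the substance is identical.
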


\subsection{Almost Sure Convergence to Partial Consensus}

\begin{definition}
We say that two peers $i$ and $j$ are
\emph{connected} at time $k$ if their values $x$ and $y$
satisfy
 $\abs{y-x} \leq \Delta$. We
say that $F\subset \{1,2,\dots,N\}$ is a \emph{cluster} at time
$k$ if it is a maximal connected component.
\end{definition}
In other words, a cluster is a maximal set of peers such that
every peer can pass the deviation test with one neighbour in
the cluster. The set of clusters at time $k$ is a random
partition of the set of peers.
The following proposition states that a cluster can either
split or stay constant, but cannot grow.

\begin{proposition}
\label{pro-clusters}
Let $\calC^N(k)=\{C_1, \dots ,C_{\ell}\}$
be the set of clusters at time $k$. Then either $\calC^N(k+ 1)=
\calC^N(k)$ or
  $\calC^N(k+ 1) = \lp \calC^N(k) \setminus C_{\ell_1}\rp \cup \calC'$
where $\calC'$ is a partition of $C_{\ell_1}$,
for some $\ell_1\in \{1, \dots, \ell\}$.
 \label{cluster_not_grow}
\end{proposition}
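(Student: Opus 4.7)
The plan is a case split on the sampled pair $(i,j)$ at step $k$. If $|X_i^N(k)-X_j^N(k)|>\Delta$, then $X^N(k+1)=X^N(k)$ by the update rule and $\calC^N(k+1)=\calC^N(k)$ trivially. Otherwise $i$ and $j$ are connected at time $k$ and hence belong to a common cluster, which I would take to be $C_{\ell_1}$. The goal is then to show that interaction can only split $C_{\ell_1}$, never merge it with any other cluster nor affect the other clusters.

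The engine of the proof is a structural lemma about clusters that I would establish first: for any cluster $C$ at time $k$, with values sorted as $x_1\le x_2\le\cdots\le x_m$, the consecutive gaps satisfy $x_{r+1}-x_r\le\Delta$, and consequently every peer $p\notin C$ has value outside the closed interval $[x_1,x_m]$ and in fact at distance strictly more than $\Delta$ from it. The gap bound is by contradiction: a gap larger than $\Delta$ would split $C$ into two nonempty sets whose values are pairwise at distance more than $\Delta$, contradicting that $C$ is a connected component of the $\Delta$-proximity graph. Then if some $p\notin C$ had $X_p\in[x_1,x_m]$, $X_p$ would be bracketed by some consecutive pair $(x_r,x_{r+1})$ and thus within $\Delta$ of one of them (since $(X_p-x_r)+(x_{r+1}-X_p)=x_{r+1}-x_r\le\Delta$), making $p$ connected to $C$ and hence in $C$ by maximality. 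The strict $>\Delta$ separation beyond the interval then follows because the extremal peer in $C$ realizing $x_1$ or $x_m$ is not connected to $p$.

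With the lemma in hand, the conclusion follows quickly. The barycentric update keeps the new values in the convex hull: $X_i^N(k+1),X_j^N(k+1)\in[\min(X_i,X_j),\max(X_i,X_j)]\subset[\min C_{\ell_1},\max C_{\ell_1}]$, while every peer $\ell\ne i,j$ is unchanged. For any $p\notin C_{\ell_1}$ and any $q\in C_{\ell_1}$, the lemma gives $X_p^N(k)$ at distance more than $\Delta$ from $[\min C_{\ell_1},\max C_{\ell_1}]$, and since $X_q^N(k+1)$ still lies in that interval, $|X_p^N(k+1)-X_q^N(k+1)|>\Delta$. Thus no edge is created between $C_{\ell_1}$ and the complement, and for clusters $C\ne C_{\ell_1}$ neither the peers nor their values have changed, so $C$ persists as a cluster. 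The only place where edges may have been lost is inside $C_{\ell_1}$, so it breaks up into some partition $\calC'$ of itself, yielding $\calC^N(k+1)=(\calC^N(k)\setminus C_{\ell_1})\cup\calC'$; the trivial partition $\calC'=\{C_{\ell_1}\}$ recovers the first alternative. The main obstacle is the structural lemma, i.e.\ turning the combinatorial notion of maximal connected component into the geometric statement that outsiders sit strictly more than $\Delta$ away from the cluster's interval of values; once this is available, the interaction analysis is immediate from the barycentric form of the update and the fact that peers outside $\{i,j\}$ do not move.
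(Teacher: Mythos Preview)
Your proof is correct and follows the same strategy as the paper's: case-split on whether the sampled pair interacts, place $i,j$ in a common cluster $C_{\ell_1}$, and check that no edge can appear between $C_{\ell_1}$ and its complement because the updated opinions remain in a convex hull from which outsiders were already more than $\Delta$ away. Your structural lemma is more than the paper uses, however. The paper argues more directly: any $i'\notin C_{\ell_1}$ is, by maximality of the cluster, disconnected from $i$ and from $j$ individually, i.e.\ $|X_{i'}-X_i|>\Delta$ and $|X_{i'}-X_j|>\Delta$; since the updated values of $i$ and $j$ lie in $[\min(X_i,X_j),\max(X_i,X_j)]$ and $X_{i'}$ is at distance $>\Delta$ from both endpoints of that interval (and cannot lie between them because $|X_i-X_j|\le\Delta$), the disconnection persists. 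So one never needs the full-interval separation $\mathrm{dist}(X_{i'},[\min C_{\ell_1},\max C_{\ell_1}])>\Delta$; the two-point version for the specific pair $(i,j)$ suffices. Your lemma is a pleasant standalone fact about clusters, but for this proposition it is overkill.
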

The number of clusters is thus non decreasing,
and since it is bounded by $\left\lceil
\frac{1}{\Delta} \right\rceil$ it must be
constant after some time, yielding the
following:

\begin{corollary}
\label{constant_clusters_after_time} There exists a random time $K^N$,
a.s. finite, such that
 \ben
 \calC^N(k) = \calC^N(K^N) \mfor k \geq K^N\,.
 \een
\end{corollary}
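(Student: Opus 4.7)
The plan is to use Proposition \ref{pro-clusters} as the sole mechanism and treat the integer-valued process $(|\calC^N(k)|)_{k\in\Nats}$ as a bounded monotone sequence.

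First I would show that $k\mapsto|\calC^N(k)|$ is non-decreasing, with strict increase whenever $\calC^N(k+1)\neq\calC^N(k)$. Indeed, in the non-trivial alternative of Proposition \ref{pro-clusters}, a single cluster $C_{\ell_1}$ is replaced by a partition $\calC'$ of it; reading the two alternatives as exclusive forces $|\calC'|\geq 2$, whence
\[
|\calC^N(k+1)|=|\calC^N(k)|-1+|\calC'|\geq|\calC^N(k)|+1.
\]

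Second I would bound this process. Since by definition any cluster $C$ has the property that consecutive peers in $C$ (after sorting their values) are at distance at most $\Delta$, and since any two distinct clusters must contain values separated by more than $\Delta$, the interval $[0,1]$ can contain at most $\lceil 1/\Delta\rceil$ clusters, so $|\calC^N(k)|\leq\lceil 1/\Delta\rceil$ for every $k$. Combined with the monotonicity above, on every sample path the process $|\calC^N(k)|$ undergoes at most $\lceil 1/\Delta\rceil-|\calC^N(0)|$ strict increases and is therefore eventually constant.

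Finally I would define
\[
K^N=\min\lc k\in\Nats:\, |\calC^N(k')|=|\calC^N(k)|\mfor k'\geq k\rc,
\]
which is a measurable stopping time (expressible as the last hitting time of the a.s.-finite supremum of a monotone adapted integer process) and satisfies $K^N<\infty$ a.s. by the previous step. For $k\geq K^N$, the cardinality is frozen, so the strict-increase alternative of Proposition \ref{pro-clusters} is excluded and one is in the case $\calC^N(k+1)=\calC^N(k)$, giving the claim by induction on $k\geq K^N$. No serious obstacle is anticipated; the only point requiring care is reading Proposition \ref{pro-clusters} so that its two alternatives are exclusive, i.e.\ that $\calC'$ denotes a nontrivial partition.
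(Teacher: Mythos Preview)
Your proposal is correct and matches the paper's approach exactly: the paper simply observes (in the sentence preceding the corollary) that the number of clusters is non-decreasing by \pref{pro-clusters} and bounded by $\lceil 1/\Delta\rceil$, hence eventually constant. Your worry about exclusivity of the two alternatives in \pref{pro-clusters} is harmless either way: if $|\calC^N(k+1)|=|\calC^N(k)|$ in the second alternative then $|\calC'|=1$, which forces $\calC'=\{C_{\ell_1}\}$ and hence $\calC^N(k+1)=\calC^N(k)$ regardless.
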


Finally, we prove that the occupancy measure
converges to a partial consensus (see \ref{sec-probab} for the
usual weak topology on $\calP[0,1]$):

\begin{theorem}
\label{thm-longtimelim}
\label{discrete_convergence} As $k$ goes to infinity,  $M^N(k)$ converges
almost surely, for the weak topology on $\calP[0,1]$, to a random probability $M^N(\infty)$,
which is a partial consensus with $L^N$ components,
where $L^N:=\textnormal{Card}(\calC^N(K^N))$ is
the \emph{final number of clusters}.
\end{theorem}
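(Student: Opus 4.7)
The plan is to fix the cluster partition once it stabilizes at the a.s.\ finite random time $K^N$ from \coref{constant_clusters_after_time}, and then show that within each cluster the opinions collapse to its conserved barycenter, via a quadratic Lyapunov function controlled by its Doob compensator. Writing $\calC^N(K^N)=\{C_1,\ldots,C_{L^N}\}$, I first note that maximality of $\le\Delta$-connected components forces any two peers in distinct clusters to stay at distance strictly greater than $\Delta$ at every $k\ge K^N$, so no cross-cluster interaction is ever possible. Consequently, for each $C=C_m$ the barycenter $\bar x_C:=\frac{1}{\abs{C}}\sum_{i\in C}X_i^N(K^N)$ is exactly conserved from $K^N$ onwards; the barycentric update also makes $\max_{i\in C}X_i^N(k)$ non-increasing and $\min_{i\in C}X_i^N(k)$ non-decreasing, so the strict inter-cluster separation $>\Delta$ is preserved for all future times.

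Next I would introduce $V_C(k):=\sum_{i\in C}(X_i^N(k)-\bar x_C)^2$; the identity underlying \pref{prop-convex} for $h(x)=(x-\bar x_C)^2$ yields the sharp drop $V_C(k)-V_C(k+1)=2w(1-w)(X_i^N(k)-X_j^N(k))^2$ whenever the selected pair $\{i,j\}\subset C$ is connected, and leaves $V_C$ unchanged otherwise. Hence $V_C$ is a non-negative bounded non-increasing process for $k\ge K^N$, converging a.s.\ to some $V_C(\infty)\ge 0$. Because the selected pair is uniform over the $\binom{N}{2}$ pairs and independent of $\calF_k$, the conditional increment is
\[
\E[V_C(k)-V_C(k+1)\mid\calF_k]\;=\;\frac{2w(1-w)}{\binom{N}{2}}\,\Phi(k),\quad \Phi(k):=\!\!\sum_{\{i,j\}\subset C,\;\abs{X_i^N(k)-X_j^N(k)}\le\Delta}\!\!(X_i^N(k)-X_j^N(k))^2,
\]
so the Doob compensator $A$ of the bounded non-negative process $V_C$ satisfies $\E[A_\infty]\le\E[V_C(K^N)]<\infty$, giving $A_\infty<\infty$ a.s., and therefore $\sum_{k\ge K^N}\Phi(k)<\infty$ a.s., whence $\Phi(k)\to 0$ almost surely.

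The pointwise conclusion inside $C$ then follows from a spanning-tree bound. At every $k\ge K^N$ the cluster $C$ remains $\le\Delta$-connected by \pref{pro-clusters}, so it admits some spanning tree $T_k$ whose $\abs{C}-1$ edges each appear in the sum defining $\Phi(k)$; every edge of $T_k$ thus satisfies $\abs{X_u^N(k)-X_v^N(k)}\le\sqrt{\Phi(k)}$, and the triangle inequality along a $T_k$-path between the cluster's current argmax and argmin yields
\[
\max_{i\in C}X_i^N(k)-\min_{i\in C}X_i^N(k)\;\le\;(\abs{C}-1)\sqrt{\Phi(k)}\;\longrightarrow\;0\quad\text{a.s.}
\]
Combined with $\min_{i\in C}X_i^N(k)\le\bar x_C\le\max_{i\in C}X_i^N(k)$, this forces $X_i^N(k)\to\bar x_C$ a.s.\ for every $i\in C$; summing over clusters gives weak a.s.\ convergence $M^N(k)\to M^N(\infty):=\sum_{m=1}^{L^N}\frac{\abs{C_m}}{N}\delta_{\bar x_{C_m}}$. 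The strict inter-cluster separation noted above passes to the limit, so the distinct $\bar x_{C_m}$'s are pairwise more than $\Delta$ apart, and $M^N(\infty)$ is a partial consensus with exactly $L^N$ components as claimed.

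The main obstacle is upgrading the elementary $L^1$-estimate $\sum_k\E[\Phi(k)]<\infty$ to the pathwise summability $\sum_k\Phi(k)<\infty$ a.s.\ in spite of the discontinuous interaction indicator: this is handled cleanly by the Doob decomposition of the bounded non-negative supermartingale $V_C$. A second, milder subtlety is that $\Phi(k)$ only captures currently-connected pairs and could miss the cluster's extremes, but since $C$ stays connected the spanning-tree bound with its uniform-in-$k$ factor $\abs{C}-1$ bridges this gap.
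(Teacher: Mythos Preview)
Your proof is correct and takes a genuinely different route from the paper's. Both arguments rest on the same quadratic Lyapunov identity---an effective interaction drops the (within-cluster) second moment by exactly $2w(1-w)(X_i^N-X_j^N)^2$---but they exploit it differently. The paper argues by contradiction: assuming $\limsup_k \delta_{\ell_1}(k)>0$ for some cluster, it extracts a subsequence along which the diameter stays bounded below, then uses a second Borel--Cantelli argument to find infinitely many times at which the diameter-realizing pair is actually selected, forcing a uniformly positive variance drop each time and contradicting convergence of $\sigma^2(k)$. Your approach instead reads off from the Doob decomposition of the bounded nonnegative supermartingale that $\sum_k\Phi(k)<\infty$ a.s., and then converts $\Phi(k)\to 0$ into $\mathrm{diam}\,C\to 0$ via the spanning-tree bound. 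This is cleaner in one respect: your $\Phi(k)$ sums only over \emph{currently connected} pairs, so the argument never needs the extremal pair of a cluster to be within $\Delta$ of each other---a point the paper's proof, as written, glosses over when it applies the variance-drop lemma to $(I(k),J(k))$ without checking $|X^N_{I(k)}-X^N_{J(k)}|\le\Delta$. The paper's line is in turn more elementary, needing no martingale machinery.

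Two minor polishings are worth making explicit. First, $K^N$ is a last-change time rather than a stopping time, so $V_C$ and its compensator are not adapted from time $0$; the cleanest fix is to run your compensator argument on the globally adapted process $\sum_i X_i^N(k)^2$ (whose conditional decrement is $\frac{2w(1-w)}{\binom{N}{2}}\sum_{\{i,j\}:|X_i-X_j|\le\Delta}(X_i-X_j)^2$), and then restrict to $k\ge K^N$ pathwise. Second, the phrase ``passes to the limit'' for the strict separation is slightly misleading, since limits of strict inequalities are only non-strict; but you already have the right ingredient---$\bar x_{C_m}\in[\min_{C_m}X^N(k),\max_{C_m}X^N(k)]$ together with the $>\Delta$ gap between cluster ranges at any fixed $k\ge K^N$ gives $|\bar x_{C_m}-\bar x_{C_{m'}}|>\Delta$ directly, without any limiting.
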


\thref{thm-longtimelim} notably implies that there is
convergence to total consensus if and only if $L^N = 1$. The
probability $p^*:=\P(L^N=1)$ of convergence to total consensus
is not necessarily $0$ or $1$, but:
\begin{enumerate}
    \item If the diameter of $M^N(0)$ is less than $\Delta$
        (\emph{i.e.}, $\max_{i,j}\abs{X^N_i(0)-X^N_j(0)}<
        \Delta$) then $p^*=1$ (obvious);
    \item If there is more than 1 cluster in $M^N(0)$ then
        $p^*=0$ (see \pref{cluster_not_grow}).
\end{enumerate}

\section{Mean-field limit results when $N$ goes to infinity}
\label{sec-convergence}

This section is devoted to a rigorous statement for
the following heuristic statistical mechanics limit picture: all peers act independently,
as if each were influenced by an infinite
supply of independent statistically similar peers, of which the instantaneous laws solve a
nonlinear equation
obtained by consistency from this feedback.

In statistical mechanics and probability theory,
such convergence to an i.i.d. system is called \emph{chaoticity},
and the fact that chaoticity at time $0$ implies chaoticity at further times is
called \emph{propagation of chaos}.

Some other probabilistic definitions and facts are recalled in \ref{sec-probab}.

We introduce an intermediate auxiliary system, in  which the peer meet at the instants of a Poisson process, which itself constitutes a relevant opinion model.
We adapt results in
Graham-M\'el\'eard\cite{graham1994chaos,graham1997stochastic}
to prove that the sample paths of the auxiliary system are well approximated by the limit system.
We eventually control the distance between the auxiliary system and the model of
Deffuant \emph{et al}.\cite{deffuant2000mixing}

\subsection{Mean-field regime, rescaled and auxiliary systems}

The number $N$ of peers is typically large,
and we let it go to infinity. At each time-step two peers are possibly updated,
and the empirical measures have jumps of order $\frac1N$,
hence time must be rescaled by a factor $N$.
This is a mean-field limit, in which time is usually rescaled
by physical considerations (such as ``the peers meet in proportion to their numbers'').
It is also related to fluid limits.

For a Polish space $\calS$, let $\mathcal{P}(\calS)$ denote the space of probability measures
on $\calS$, with the Borel $\sigma$-field, and
$D(\Reals_+,\calS)$ denote the Skorohod space of paths
from $\Reals_+$ to $\calS$ which are
right-continuous with left-hand limits, with the product $\sigma$-field.

A non-trivial continuous-time limit process is expected for
the \emph{rescaled} system
\begin{equation}
\label{eq-resc}
\widetilde{X}^N = (\widetilde{X}^N_i)_{1\le i\le N}\,,
\qquad
\widetilde{X}^N = (\widetilde{X}^N(t), {t\in\Reals_+}) = (X^N(\lfloor Nt \rfloor), {t\in\Reals_+})\,,
\end{equation}
with
sample paths in  $D(\Reals_+,[0,1]^N)$.
The corresponding empirical measure $\widetilde{\Lambda}^N$ and
the process $\widetilde{M}^N = (\widetilde{M}^N(t), {t\in\Reals_+})$
constituted of its marginal laws are given by
\begin{equation}
\label{eq-empresc}
\widetilde{\Lambda}^N = \frac1N \sum_{i=1}^N\delta_{\widetilde{X}^N_i}
= \frac1N \sum_{i=1}^N\delta_{(\widetilde{X}^N_i(t), {t\in\Reals_+})}\,,
\qquad
\widetilde{M}^N(t) = \frac1N \sum_{i=1}^N\delta_{\widetilde{X}^N_i(t)}\,,
\end{equation}
respectively with samples in $\calP(D(\Reals_+,[0,1]))$
and sample paths in $D(\Reals_+,\calP[0,1])$.

An \emph{auxiliary} (rescaled) system is obtained by randomizing the
jump instants of the original model by waiting i.i.d.\ exponential r.v. of mean $\frac1N$
between selections, instead of deterministic $\frac1N$ durations.
A convenient construction using a Poisson process
$(A(t), {t\in\Reals_+})$ of intensity~$1$ is that, with sample spaces as above,
\begin{equation}
\label{eq-aux}
\begin{aligned}
&\widehat{X}^N = (\widehat{X}^N_i)_{1\le i\le N}\,,
&&\widehat{X}^N = (\widehat{X}^N(t), {t\in\Reals_+})
= (X^N(A(Nt)), {t\in\Reals_+})\,,
\\
&\widehat{\Lambda}^N = \frac1N \sum_{i=1}^N \delta_{\widehat{X}^N_i}\,,
&&\widehat{M}^N = (\widehat{M}^N(t),{t\in\Reals_+})\,,\;\;
\widehat{M}^N(t) = \frac1N \sum_{i=1}^N \delta_{\widehat{X}^N_i(t)}\,.
\end{aligned}
\end{equation}

If $T_k$ for $k\ge0$ are given by $T_0=0$ and the jump instants of $(A(t), {t\in\Reals_+})$, then
\begin{equation}
\label{eq-rescaux}
\quad
\widetilde{X}^N(t) = \widehat{X}^N(t') = X^N(k)\,,
\qquad
\frac{k}N\le t < \frac{k+1}N\,,
\quad
\frac{T_k}N\le t' < \frac{T_{k+1}}N\,.
\end{equation}
Note that $\widetilde{M}^N(t) = M^N(\lfloor Nt \rfloor)$
and $\widehat{M}^N(t) = M^N(A(Nt))$,
but that the relationship between
$\widetilde{\Lambda}^N $ and $\widehat{\Lambda}^N $ and $\Lambda^N$
is more involved.

The process $\widehat{X}^N$ is a pure-jump Markov process with rate bounded by $N$, at which
two peers are chosen uniformly at random without replacement, say $i$ and $j$
at time $t$, and:
\begin{itemize}
\item
if $|\widehat{X}^N_i(t-)- \widehat{X}^N_j(t-)| >\Delta$ then $\widehat{X}^N(t) = \widehat{X}^N(t-)$,
\item
if $|\widehat{X}^N_i(t-)-\widehat{X}^N_j(t-)| \le \Delta$ then only the values of peers $i$ and $j$
change to
\[
\left\{
\begin{aligned}
\widehat{X}^N_i(t) &= w \widehat{X}^N_i(t-) + (1-w) \widehat{X}^N_j(t-)\,,
\\
\widehat{X}^N_j(t) &= w X^N_j(t-) + (1-w) \widehat{X}^N_i(t-)\,.
\end{aligned}
\right.
\]
\end{itemize}

\begin{remark}
\label{r-ratecomp}
Each of the $\frac{N(N-1)}2$ unordered pairs of peers is thus
chosen at rate $\frac2{N-1} = N / \frac{N(N-1)}2$, and then both peers
undergo a \emph{simultaneous} jump in their values if these are close enough.
Each peer is thus affected at rate $2 = (N-1)\frac2{N-1}$.
\end{remark}

The generator $\calA^N$ of $\widehat{X}^N = (\widehat{X}^N_n)_{1\le n\le N}$
acts on $f \in L^\infty([0,1]^N)$ (the Banach space of essentially bounded measurable functions
on $[0,1]^N$) as
\begin{equation}
\label{eq-gensys}
\calA^N f ((x_n)_{1\le n\le N})
= \frac{2}{N-1} \sum_{1\le i < j\le N}
[f((x_n)_{1\le n\le N}^{i,j}) - f((x_n)_{1\le n\le N})] \ind{|x_i-x_j|\le\Delta}
\end{equation}
where $(x_n)_{1\le n\le N}^{i,j}$ is obtained from $(x_n)_{1\le n\le N}$ by replacing $x_i$ and $x_j$ with
$w x_i + \allowbreak(1-w) x_j$ and $w x_j + (1-w) x_i$  and leaving the other coordinates fixed.
Its operator norm is bounded by $2N$, and the law of the corresponding Markov process $\widehat{X}^N$
is well-defined in terms of the law of $\widehat{X}^N(0) = X^N(0)$.

For $1\le k \le N$, this generator
acts on $h_k \in L^\infty([0,1]^N)$ which depend only on the $k$-th coordinate,
of the form $h_k((x_n)_{1\le n\le N}) = h(x_k)$ for
some $h\in L^\infty[0,1]$, as
\begin{multline}
\label{eq-margen}
\frac{2}{N-1} \sum_{\substack{1\le j\le N : j\neq k}}
[h(w x_k + (1-w) x_j)-h(x_k)]  \ind{|x_k-x_j|\le\Delta}
\\
:=
\calA\Biggl( \frac{1}{N-1} \sum_{\substack{1\le j\le N : j\neq k}} \delta_{x_j}(dy)\Biggr)h(x_k)
\end{multline}
where the generators $\calA(\mu)$
act on $h\in L^\infty[0,1]$ as
\begin{equation}
\label{eq-genfam}
\calA(\mu) h (x)
=
2\langle [h(w x + (1-w) y)-h(x)] \ind{|x-y|\le\Delta} \,, \mu(dy) \rangle\,,
\quad
\mu \in \calP[0,1]\,.
\end{equation}

Heuristically, if the
$\widehat{X}^N_i(0)$ converge in law to i.i.d. r.v. of law $m_0$, then the $\widehat{X}^N_i$ are expected to converge in law to i.i.d.
processes of law $Q$, the law of a
time-inhomogeneous
Markov process  $\widehat{X}$ with initial law $m_0$ and generator $\calA (m(t))$ at time $t\in\Reals_+$,
where $m(t) = \calL(\widehat{X}_t) = Q_t$ is the instantaneous law of this same process,
the marginal of $Q$.
Such a process is called a nonlinear Markov process, or a McKean-Vlasov process.
Considering the forward Kolmogorov equation for this Markov process,
 $(m(t), {t\in\Reals_+})$ should satisfy the following weak (or distributional-sense)
 formulation of a nonlinear integro-differential
equation.

\begin{definition}[Problem 1]
\label{d-prob1}
We say that $m=(m(t), {t\in\Reals_+})$ with $m(t) \in \calP[0,1]$
is solution to Problem~1 with initial value
$m_0 \in \calP[0,1]$ if $m(0) = m_0$ and
\begin{align}
\label{eq-fm}
&\langle h, m(t) \rangle
- \langle h, m(0) \rangle
= \int_0^t \langle \calA (m(s))h, m(s)\rangle\,ds
\notag\\
&\qquad :=
\int_0^t 2 \big\langle [h(w x + (1-w) y)-h(x)] \ind{|x-y|\le\Delta} \,,
m(s)(dy) m(s)(dx)
\big\rangle\,ds
\end{align}
for all test functions $h\in L^\infty[0,1]$;
this can be written more symmetrically as
\begin{align}
\label{eq-alt-def-pb1}
\langle h, m(t) \rangle
- \langle h, m(0) \rangle
&=
\int_0^t
\big\langle [h(w x + (1-w) y) + h(w y + (1-w) x)
\notag\\
&\qquad
 - h(x) -h(y)] \ind{|x-y|\le\Delta} \,,
m(s)(dy) m(s)(dx)
\big\rangle\,ds\,.
\end{align}
\end{definition}

The distance in total variation norm of $\mu$ and $\mu'$ in $\calP(\calS)$ is
given by
\begin{equation}
\label{tvn}
|\mu - \mu'| = \sup_{\Vert\phi \Vert_\infty\le 1} \langle \phi, \mu-\mu' \rangle
= 2\sup\{\mu(A)-\mu'(A) :  \text{measurable } A\subset \calS \}\,.
\end{equation}

\begin{theorem}
\label{theo-eunl}
Consider the generators $\calA(\mu)$ given by \eqref{eq-genfam}, and  $m_0$ in $\calP[0,1]$.
\begin{enumerate}
\item
\label{eunl1}
There is a unique solution $m=(m(t), {t\in\Reals_+})$ to Problem 1 starting at $m_0$.
For the total variation norm on $\calP[0,1]$,
$t \mapsto m(t)$ is continuous, and
$m_0\mapsto (m(t), {t\in\Reals_+})$ is continuous for uniform convergence on bounded time sets.

\item
\label{eunl2}
There is a unique law $Q=\calL(\widehat{X})$ on $D(\Reals_+,[0,1])$
for an inhomogeneous Markov process $\widehat{X} = (\widehat{X}(t), t\in \Reals_+)$ with
generator $\calA (m(t))$ at time $t$ and initial law $\calL(\widehat{X}(0))= m_0$.
Its marginal $Q_t=\calL(\widehat{X}_t)$ is given by $m(t)$.
\end{enumerate}
\end{theorem}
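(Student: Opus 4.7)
The plan is to solve Problem~1 by a Banach fixed-point argument in $C([0,T],\calP[0,1])$ equipped with the total-variation distance~(\ref{tvn}) for arbitrary $T>0$, then to build the nonlinear Markov process of~(\ref{eunl2}) from the resulting marginal flow $m(t)$ by standard bounded-generator theory, and finally to identify its one-dimensional marginals with $m(t)$. The decisive structural feature is that $\calA(\mu)$ is a uniformly bounded jump generator on $L^\infty[0,1]$: from~(\ref{eq-genfam}) one reads off $\|\calA(\mu)h\|_\infty\le 4\|h\|_\infty$ for every $\mu\in\calP[0,1]$, and $\mu\mapsto\calA(\mu)h$ is affine-linear and $4$-Lipschitz in total variation.

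For part~(\ref{eunl1}), I would introduce the symmetric bilinear map $\calB$ valued in signed measures on $[0,1]$, defined in duality with $h\in L^\infty[0,1]$ by
\begin{equation*}
\langle h,\calB(\mu,\nu)\rangle := 2\langle [h(wx+(1-w)y)-h(x)]\ind{|x-y|\le\Delta},\,\mu(dx)\nu(dy)\rangle,
\end{equation*}
so that~(\ref{eq-fm}) is the integral equation $m(t)=m_0+\int_0^t \calB(m(s),m(s))\,ds$. Bounding $\|h\|_\infty\le 1$ and integrating first against $\nu-\nu'$ yields $|\calB(\mu,\nu-\nu')|\le 4|\nu-\nu'|$ and symmetrically, so the splitting $\calB(\mu,\mu)-\calB(\mu',\mu')=\calB(\mu,\mu-\mu')+\calB(\mu-\mu',\mu')$ gives $|\calB(\mu,\mu)-\calB(\mu',\mu')|\le 8|\mu-\mu'|$ on $\calP[0,1]$. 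Defining $\Phi(m)(t)=m_0+\int_0^t \calB(m(s),m(s))\,ds$, contraction of $\Phi$ in the equivalent weighted norm $\sup_{t\le T}e^{-16t}|\cdot|$ produces a unique fixed point. That every Picard iterate actually lies in $\calP[0,1]$ is most transparent from the Duhamel form
\begin{equation*}
m(t)=e^{-2t}m_0+2\int_0^t e^{-2(t-s)}K^*(m(s))m(s)\,ds,
\end{equation*}
where $K(\mu):=\mathrm{Id}+\frac{1}{2}\calA(\mu)$ is a Markov kernel (as one checks by applying it to $h\equiv 1$), which exhibits each iterate as a convex combination of probability measures. The construction extends from $[0,T]$ to all of $\Reals_+$; since $|\calB(\mu,\mu)|\le 4$ for $\mu\in\calP[0,1]$, the integral equation forces $t\mapsto m(t)$ to be $4$-Lipschitz in total variation; and a final Gronwall estimate on $|m(t)-m'(t)|$ with inhomogeneity $|m_0-m_0'|$ gives continuous dependence on the initial condition uniformly on bounded time sets.

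For part~(\ref{eunl2}), once $m$ is known the family $(\calA(m(t)))_{t\ge 0}$ is uniformly bounded of the form $2(K(m(t))-\mathrm{Id})$; the standard explicit construction of an inhomogeneous pure-jump Markov process from a rate-$2$ Poisson clock, drawing an independent jump from the kernel $K(m(t))$ at each clock instant~$t$, produces a unique law $Q$ on $D(\Reals_+,[0,1])$ with initial law $m_0$ and generator family $\calA(m(t))$. Its one-dimensional marginals $q_t:=\calL(\widehat{X}(t))$ satisfy the forward Kolmogorov equation $\langle h,q_t\rangle=\langle h,m_0\rangle+\int_0^t\langle\calA(m(s))h,q_s\rangle\,ds$; subtracting~(\ref{eq-fm}) and using $|\langle\calA(m(s))h,q_s-m(s)\rangle|\le 4\|h\|_\infty|q_s-m(s)|$ yields $|q_t-m(t)|\le 4\int_0^t|q_s-m(s)|\,ds$, so $q_t=m(t)$ by Gronwall.

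The main obstacle is the bilinear/quadratic coupling $\mu\mapsto\calB(\mu,\mu)$, controlled by the symmetric splitting above; everything else then reduces to bounded-operator estimates and Gronwall's lemma. The discontinuity of $\ind{|x-y|\le\Delta}$ is harmless in the total-variation norm — which is precisely why we work in this strong topology rather than in a weak one — and no regularity of $m_0$ is required.
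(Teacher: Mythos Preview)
Your argument is correct and, in substance, the same as the paper's; you simply carry it out in full whereas the paper defers to a reference. The paper rewrites $\calA(\mu)h(x)=\int(h(z)-h(x))\,J(\mu,x,dz)$ for the jump kernel $J(\mu,x,\cdot)$ given as the image of $2\ind{|x-y|\le\Delta}\,\mu(dy)$ under $y\mapsto wx+(1-w)y$, observes $|J(\mu,x,\cdot)|\le 2$ and $|J(\mu,x,\cdot)-J(\nu,x,\cdot)|\le 2|\mu-\nu|$, and then invokes a general well-posedness result (Proposition~2.3 in Ref.~\refcite{graham2000chaoticity}) for nonlinear Kolmogorov equations with bounded, total-variation-Lipschitz jump kernels. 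Your Picard iteration in total variation, the Duhamel rewriting $m(t)=e^{-2t}m_0+2\int_0^t e^{-2(t-s)}K^*(m(s))m(s)\,ds$ to keep iterates in $\calP[0,1]$, and the Gronwall arguments for continuous dependence and for the marginal identification in part~(\ref{eunl2}) are precisely what the proof of that proposition supplies; packaging the Lipschitz structure on the bilinear form $\calB$ rather than on $J(\mu,x,\cdot)$ is an equivalent bookkeeping choice. One cosmetic slip: your $\calB$ is bilinear but not symmetric (the roles of $x$ and $y$ in the integrand differ); this is harmless, since the splitting $\calB(\mu,\mu)-\calB(\mu',\mu')=\calB(\mu,\mu-\mu')+\calB(\mu-\mu',\mu')$ you use holds for any bilinear map.
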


\begin{remark}
\label{r-weak-funct}
Such nonlinear Markov processes and equations
are well-known to probabilists.
The equations 1 have same probabilistic structure as the weak forms (2.1), (2.2), (2.4)
(with $\mathcal{L}=0$)
of the spatially homogeneous version (without $x$-dependence) of the Boltzmann equation (1.1) in
Graham-M\'el\'eard\cite{graham1997stochastic},
the weak form (1.7) of the (cutoff) Kac equation (1.1)-(1.2)
in Desvillettes \emph{et al}.,\cite{desvillettes1999probabilistic}
 the nonlinear Kolmogorov equation (2.7) in Graham,\cite{graham2000chaoticity}
and the kinetic equation (9.4.4) in Graham.\cite{graham2000kinetic}
The weak formulation involves explicitly the generator of the underlying Markovian dynamics and allows to understand it more directly. The
functional formulation (for probability density functions) of this integro-differential equation involves an adjoint expression of this generator, and will be seen in Section~\ref{sec-num}.
\end{remark}

\subsection{Difficulties for classical mean-field limit proofs}
\label{s-diffmfpr}

The system $\widehat{X}^N$ exhibits simultaneous jumps in two coordinates, and
is in \emph{binary mean-field interaction} in
statistical mechanics terminology.

A system in which only one opinion would change at a time would be
in \emph{simple} mean-field interaction; the generator $\calA^N$ in \eqref{eq-gensys}
would be replaced by a simpler expression, which could be written
 as a sum over $i$ of terms acting only on the $i$-th coordinate in terms of the value $x_i$ and of $\frac1{N-1}\sum_{j\neq i}\delta_{x_j}$. Consequently, the empirical measures
would satisfy an equation in almost closed form, which could be exploited in various ways to prove convergence to a limit
satisfying the closed
nonlinear equation in which the empirical distribution is replaced by the law itself.

A binary mean-field interacting system is much more complex,
since there is much more feedback between peers.
It is
impossible to relate it in a simple way to an independent system, in which the coordinates cannot jump simultaneously.
Because of that, the coupling methods introduced by
Sznitman,\cite{sznitman1991topics} see also M\'el\'eard\cite{meleard1996asymptotic} and Graham-Robert,\cite{graham2009interacting} cannot be adapted here.
Moreover, these use contraction techniques, and the metric used is too weak for
the indicator functions.

Elaborate compactness-uniqueness methods are also used for proofs,
see Sznitman,\cite{sznitman1991topics} and also M\'el\'eard,\cite{meleard1996asymptotic}
Graham-M\'el\'eard\cite{graham1997stochastic} Section~4,
and Graham,\cite{graham2000chaoticity,graham2000kinetic} but require weak topologies for compactness criteria, and continuity properties
in order to pass to the limit; hence, the indicator functions prevent
using them here.

\begin{remark}
The indicator functions require quite strong topologies.
For instance, if $0<a < b = a+\Delta<1$
and $m_0= \frac12(\delta_{a} + \delta_{b})$, then
there exists $M^N_+(0)$ with support not intersecting $[a,b]$ and converging weakly to
$m(0)$, and starting there $M^N_+(k)$ and $m^N_+(t)$ have at least two clusters and support outside $[a, b]$. There exists also $M^N_-(0)$ with support inside $(a,b)$
and converging weakly to
$m(0)$, and $M^N_-(k)$ and $m^N_+(t)$ have one cluster and support inside $(a, b)$
for any $k\in\Nats$, and will be a total consensus after some random time.
\end{remark}

\subsection{Rigorous mean-field limit results for the auxiliary system}

Systems of this type were studied in
Graham-M\'el\'eard,\cite{graham1994chaos,graham1997stochastic}
see also Ref.~\refcite{desvillettes1999probabilistic}.
The first paper studied a class of  not necessarily Markovian multitype
interacting systems, as a model for
communication networks. The second studied Monte-Carlo methods for a class of Boltzmann models,  and in particular expressed  some notions and results of
the first in this framework. Their results yield the following.

For $k\ge1$ and $T\ge0$ and laws $P$ and $P'$ on $D(\Reals_+,[0,1]^k)$, let
$|P-P'|_T$ denote the distance in variation norm \eqref{tvn} of
the restrictions of $P$ and $Q$ on $D([0,T],[0,1]^k)$.
When clear, the processes will be restricted to $[0,T]$ without further mention.

\begin{theorem}
\label{theo-chaotic-aux}
Consider the auxiliary system \eqref{eq-aux} for $N\ge2$.
If the $\widehat{X}^N_i(0):=X^N_i(0)$ are i.i.d. of law $m_0$, then
there is propagation of chaos.
More precisely, let $m=(m(t), {t\in\Reals_+})$ and $Q$ be as in \thref{theo-eunl} for $m(0)=m_0$,
and $T>0$.
\begin{enumerate}
\item
For $1\le k \le N$,  \label{chaotic-aux1}
\[
| \calL(\widehat{X}^N_1, \dots, \widehat{X}^N_k)
- \calL(\widehat{X}^N_1)\otimes \dots \otimes \calL(\widehat{X}^N_k)|_T
\le 2k(k-1)\frac{2T + 4 T^2}{N-1}\,,
\]
and
\[
\Biggl|\frac1N \sum_{i=1}^N \calL(\widehat{X}^N_i) - Q\Biggr|_T
\le
| \calL(\widehat{X}^N_i)
- Q|_T
\le 6 \frac{\exp(2T)-1}{N+1}\,.
\]

\item
\label{chaotic-aux2}
For any $\phi:D([0,T],[0,1])\to\Reals$ such that $\Vert\phi\Vert_\infty \le 1$,
\[
\E\Biggl[
\biggl\langle\phi ,
\widehat{\Lambda}^N
- \frac1N \sum_{i=1}^N \calL(\widehat{X}^N_i)
\biggr\rangle^2
\Biggr]
\le \frac{4+8T + 16T^2}N\,.
\]
Moreover
\[
\widehat{\Lambda}^N \xrightarrow[N\to \infty]{\textnormal{in probab.}} Q\,,
\qquad
\widehat{M}^N \xrightarrow[N\to \infty]{\textnormal{in probab.}} m\,,
\]
respectively for the weak topology on
$\calP(D(\Reals_+,[0,1]))$ with the
Skorohod topology on $D(\Reals_+,[0,1])$,
and for the topology of uniform convergence on bounded time intervals on
$D(\Reals_+,\calP[0,1])$
with the weak topology on $\calP[0,1]$.
\end{enumerate}
\end{theorem}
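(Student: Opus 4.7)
The plan is to adapt the Graham--M\'el\'eard coupling technique to this binary-interaction setting. On a common probability space I build the auxiliary process $\widehat{X}^N$ together with $N$ i.i.d.\ nonlinear Markov copies $\widehat{X}_1,\dots,\widehat{X}_N$ of law $Q$, driven by the same Poisson pair clocks. Concretely, take $\widehat{X}^N_i(0)=\widehat{X}_i(0)$ i.i.d.\ of law $m_0$, and for each unordered pair $\{i,j\}$ let $\Pi_{ij}$ be an independent Poisson process of intensity $2/(N-1)$ carrying a pair of i.i.d.\ uniform marks. At a firing time $t$ of $\Pi_{ij}$, I update $(\widehat{X}^N_i,\widehat{X}^N_j)$ by the deterministic interaction rule; in the i.i.d.\ family I update $\widehat{X}_i$ and $\widehat{X}_j$ using fresh samples $Y^{ij}_i,Y^{ji}_j$ drawn from $m(t-)$ via inverse CDF of the marks. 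Summing $2/(N-1)$ over the $N-1$ partners of $i$ gives total firing rate $2$ for particle $i$, and the resulting generator acting on particle $i$ coincides with $\calA(m(t-))$, so each $\widehat{X}_i$ has law $Q$ by Theorem \ref{theo-eunl}.

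To prove the TV bounds in item 1 I use a Duhamel iteration on the Markov hierarchy of $\widehat{X}^N$. Denote $P^N_k(t)=\calL(\widehat{X}^N_1(t),\dots,\widehat{X}^N_k(t))$ and compare with $m(t)^{\otimes k}$. The generator $\calA^N$ restricted to functions supported on the first $k$ coordinates splits into (a) firings of pairs inside $\{1,\dots,k\}$, contributing to the $k$-marginal dynamics at total rate $k(k-1)/(N-1)$, and (b) firings involving exactly one index in $\{1,\dots,k\}$ and one outside, contributing at rate at most $2k$ and coupling $P^N_k$ to the $(k+1)$-marginal $P^N_{k+1}$ through $\calA(m(t-))$ once the outside partner is marginalised. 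Iterating this representation once --- replacing $P^N_{k+1}$ by $P^N_k\otimes\calL(\widehat{X}^N_{k+1})$ up to an error treated inductively, and exploiting exchangeability --- produces the linear-in-$T$ term $k(k-1)\cdot 2T/(N-1)$ from the direct firings and the quadratic-in-$T$ term $k(k-1)\cdot 4T^2/(N-1)$ from indirect firings that transmit correlation through an outside intermediate particle. For $k=1$, resumming the Duhamel series in closed form yields the $6(e^{2T}-1)/(N+1)$ bound.

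For Part 2, exchangeability gives
\[
\E[\langle\phi,\widehat{\Lambda}^N-\bar P^N\rangle^2]=\frac{1}{N}\mathrm{Var}(\phi(\widehat{X}^N_1))+\frac{N-1}{N}\mathrm{Cov}(\phi(\widehat{X}^N_1),\phi(\widehat{X}^N_2))
\]
with $\bar P^N=\frac{1}{N}\sum_i\calL(\widehat{X}^N_i)=\calL(\widehat{X}^N_1)$, and the covariance is bounded in total variation by $|P^N_2-(P^N_1)^{\otimes 2}|_T$, producing the stated $(4+8T+16T^2)/N$ estimate after applying item 1 with $k=2$. Convergence in probability of $\widehat{\Lambda}^N$ to $Q$ for the Skorohod weak topology then follows from Chebyshev applied on a countable bounded-continuous convergence-determining family on $D(\Reals_+,[0,1])$, combined with $|\calL(\widehat{X}^N_1)-Q|_T\to 0$ from item 1. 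Convergence of $\widehat{M}^N$ to $m$ uniformly on bounded intervals is deduced via $\widehat{M}^N(t)=\pi_t\widehat{\Lambda}^N$ and the total-variation continuity of $t\mapsto m(t)$ from Theorem \ref{theo-eunl}.

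The main obstacle will be controlling the Duhamel iteration carefully enough to separate direct from indirect correlations in item 1 and to obtain the sharp coefficients announced. The binary character of the interaction --- the first difficulty recalled in Section \ref{s-diffmfpr} --- is precisely what forces this two-step hierarchical argument rather than a single closed Gronwall equation. The indicator functions in the interaction, on the other hand, are harmless for this program: because the comparison is made at the level of laws, the total-variation norm absorbs the discontinuities that block the pathwise couplings and weak-topology compactness arguments used in simple mean-field settings.
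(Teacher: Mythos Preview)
Your Part~2 argument matches the paper's: expand the square by exchangeability, bound the covariance by the $k=2$ total-variation estimate from Part~1, metrize the weak topology on $\calP(D(\Reals_+,[0,1]))$ by a countable convergence-determining family, and deduce convergence of $\widehat{M}^N$ from that of $\widehat{\Lambda}^N$ via the general fact (Theorem~4.6 in Graham--M\'el\'eard\cite{graham1997stochastic}) that path-space empirical convergence yields uniform-on-compacts convergence of the marginal process when the limit is continuous.

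For Part~1 the paper does not rederive anything: it simply identifies $\calA^N$ as the binary mean-field model~(2.6) of Graham--M\'el\'eard\cite{graham1997stochastic} with $\calL_i=0$ and jump kernel bounded in total mass by $\Lambda=2$, and quotes their Theorem~3.1 for both displayed bounds. Your sketch tries to reprove that theorem, and the underlying idea (separating direct from indirect correlation via the hierarchy) is the right one, but there are two concrete gaps. First, your coupling does not produce independent copies: $\widehat{X}_i$ and $\widehat{X}_j$ both jump at the firing times of $\Pi_{ij}$, so while each has marginal law $Q$, the family is not i.i.d.; in any case you never use this coupling again. Second, and more substantively, your Duhamel iteration is set up on the time-$t$ marginals $P^N_k(t)=\calL(\widehat{X}^N_1(t),\dots,\widehat{X}^N_k(t))$ compared with $m(t)^{\otimes k}$, whereas the theorem asks for path-law total-variation bounds on $D([0,T],[0,1]^k)$, and the first inequality compares the $k$-particle law with $\calL(\widehat{X}^N_1)^{\otimes k}$, not with $Q^{\otimes k}$. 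The Graham--M\'el\'eard proof handles this by working directly on sample paths via backward interaction graphs: the $k$-marginal path law coincides with a product on the event that the interaction graph rooted at $\{1,\dots,k\}$ over $[0,T]$ is a tree, and the announced constants come from a combinatorial bound on the probability of a loop. A forward marginal expansion does not deliver path-law bounds or the specific constants without reproducing that argument.
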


The assumption that the initial conditions are i.i.d. can be appropriately relaxed,
as in Theorem~1.4 in Graham-M\'elé\'eard\cite{graham1993propagation}.

These very strong results are obtained for a relevant opinion model
given by the auxiliary (rescaled) system, and are of independent interest.
In the next section
we will derive from them some weaker results for the original discrete-time model.

\begin{remark}
\label{r:prob-law}
The convergence result for $\widehat{\Lambda}^N$ is equivalent to convergence in
law to $Q$ (Ethier-Kurtz,\cite{ethier1986markov} Corollary~3.3.3).
The convergence result for  $\widehat{M}^N$
implies convergence in law to $m$ for test functions which are continuous, bounded, and measurable for the product $\sigma$-field
(Ref.~\refcite{ethier1986markov}, Theorem 3.10.2).
Separability issues restrict these results, see \ref{sec-probab}; in fact, convergence of
$\widehat{\Lambda}^N$ holds for
any convergence induced by a denumerable set of bounded measurable functions.
\end{remark}

\subsection{From the auxiliary to the rescaled system}

For $k\ge1$, let $a_k$ denote the Skorohod metric on $D(\Reals_+,[0,1]^k)$  given by (3.5.21)
in Ethier-Kurtz\cite{ethier1986markov} for the atomic metric
$(x,y)\mapsto\ind{x\neq y}$ on $[0,1]^k$ (which
induces the topology of all subsets of $[0,1]^k$,
for which any function is continuous). Note that $a_k$ is measurable with respect to the
usual Borel $\sigma$-field on $[0,1]^k\times [0,1]^k$.

A \emph{time-change} is an increasing homeomorphism of $\Reals_+$, \emph{i.e.}, a continuous
function from $\Reals_+$ to $\Reals_+$ which is null at the origin and strictly increasing to infinity. Two paths are close for $a_k$ if there is a time-change close to the identity such that
the time-change of one path is equal to the other path.

Eq.~\eqref{eq-rescaux} is the key to obtain the following quite general result
showing that the rescaled system $\widetilde{X}^N$ is very close to
the the auxiliary system $\widehat{X}^N$, up to a well-controlled (random)
time-change.

\begin{theorem}
\label{theo-aux-resc}
Consider the rescaled system \eqref{eq-resc} and the auxiliary system \eqref{eq-aux} for $N\ge2$.
Then $
\lim_{N\to\infty}
a_N(\widetilde{X}^N, \widehat{X}^N)=0
$
in probability.
\end{theorem}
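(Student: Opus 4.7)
The starting point is the identity \eqref{eq-rescaux}, which says that $\widetilde{X}^N$ and $\widehat{X}^N$ are piecewise-constant c\`adl\`ag paths that traverse exactly the \emph{same} sequence of states $X^N(0),X^N(1),X^N(2),\dots$, differing only in the grid of jump times: the deterministic grid $k/N$ for $\widetilde{X}^N$ versus the Poisson grid $T_k/N$ for $\widehat{X}^N$, where $T_k$ is the $k$-th atom of the rate-$1$ Poisson process $A$. Since the Skorohod metric $a_N$, built over the atomic metric on $[0,1]^N$, is designed precisely to compare discrete-valued c\`adl\`ag paths up to a near-identity time-change, the plan is to produce an explicit random time-change aligning the two jump grids, and then to show that it tends to the identity uniformly on bounded time intervals, in probability, via the law of large numbers for the Poisson process.

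Concretely, I would introduce the almost-surely unique strictly increasing piecewise-linear homeomorphism $\lambda_N:\Reals_+\to\Reals_+$ satisfying $\lambda_N(T_k/N)=k/N$ for every $k\ge 0$. By construction, $t\in [T_k/N,T_{k+1}/N)$ implies $\lambda_N(t)\in [k/N,(k+1)/N)$, hence $\widetilde{X}^N(\lambda_N(t))= X^N(k) = \widehat{X}^N(t)$ pointwise on $\Reals_+$; that is, the two paths coincide exactly after the time-change, and the ``path-mismatch'' part of the metric $a_N$ vanishes identically.

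It then remains to quantify the proximity of $\lambda_N$ to the identity. This reduces to the functional law of large numbers for $A$, equivalently the strong LLN for partial sums of i.i.d.~Exp$(1)$ spacings $\xi_k$: for every $T>0$,
\[
\sup_{t\in [0,T]} |\lambda_N(t)-t|\;\le\;\frac{1}{N}\sup_{k\,:\,T_k/N\le T}|T_k-k|\;+\;\frac{1}{N}\;\xrightarrow[N\to\infty]{\textnormal{a.s.}}\;0,
\]
because $\lambda_N$ interpolates linearly between points $(T_k/N,k/N)$ lying within $|T_k-k|/N$ of the diagonal. Combined with exact agreement after the time-change, this yields $a_N(\widetilde{X}^N,\widehat{X}^N)\to 0$ in probability.

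The step I expect to be the most technically delicate is matching these bounds to the precise form of (3.5.21), whose ``closeness-to-identity'' ingredient is the logarithmic derivative $\gamma(\lambda)=\sup_{s<t}\bigl|\log\bigl((\lambda(t)-\lambda(s))/(t-s)\bigr)\bigr|$. The raw piecewise-linear $\lambda_N$ has slopes $1/\xi_{k+1}$, so $\gamma(\lambda_N) = \sup_k|\log \xi_{k+1}|$ is \emph{not} small, since Exp$(1)$ spacings are unbounded both above and below. The standard remedy is to regularize $\lambda_N$ on the asymptotically negligible set of indices $k$ for which $\xi_k\notin[\delta,1/\delta]$ (choosing $\delta>0$ small), leaving the regularized time-change with bounded slopes and absorbing the small resulting path-mismatch into the exponentially-weighted integral term in (3.5.21); the ergodic theorem for the i.i.d.~sequence $(\xi_k)$ ensures that the empirical fraction $\frac{1}{\lfloor NT\rfloor}\sum_{k\le NT}\ind{\xi_k\notin[\delta,1/\delta]}$ is small, uniformly in $N$, for $\delta$ small. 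Under equivalent formulations of the metric in which closeness to identity is measured in sup-norm, the argument in the previous paragraph already concludes directly.
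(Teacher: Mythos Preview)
Your approach is essentially the paper's: build a piecewise-linear time-change aligning the deterministic grid $k/N$ with the Poisson grid $T_k/N$, observe that the two paths coincide exactly after the change, and then show the time-change is close to the identity in sup-norm on bounded intervals. Two minor differences: the paper takes the time-change in the opposite direction, $\lambda_N(k/N)=T_k/N$ so that $\widetilde{X}^N=\widehat{X}^N\circ\lambda_N$, and it controls $\sup_{[0,T]}|\lambda_N-\mathrm{id}|$ by Kolmogorov's maximal inequality for $\frac1N\max_{k\le NT}|T_k-k|$ together with the elementary bound $\P(\frac1N\max_{k\le NT}(T_{k+1}-T_k)\ge\eps)\le(NT+1)e^{-N\eps}$, giving convergence in probability with explicit rates rather than invoking the SLLN. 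Notably, the paper does \emph{not} perform the $\gamma(\lambda)$ regularization you anticipate: after establishing $\sup_{[0,T]}|\lambda_N(t)-t|\to 0$ in probability it simply writes ``from which the result follows,'' in line with its description of $a_k$ just before the theorem (paths are close when equal after a time-change close to the identity); your additional step is therefore more cautious than the paper's own argument.
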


This result and Theorem~\ref{theo-chaotic-aux} now yield the main
mean-field convergence result.

\begin{theorem}
\label{theo-chaotic-resc}
Consider the rescaled system \eqref{eq-resc} for $N\ge2$.
If the $\widetilde{X}^N_i(0):=X^N_i(0)$ are i.i.d. of law $m_0$, then
there is propagation of chaos.
More precisely, let $m=(m(t), {t\in\Reals_+})$ and $Q$ be as in \thref{theo-eunl} for $m(0)=m_0$.
\begin{enumerate}
\item
\label{chaotic-resc1}
For $1\le k \le N$,
\[
\lim_{N\to\infty} \calL(\widetilde{X}^N_1, \dots, \widetilde{X}^N_k) = Q^{\otimes k}\,,
\]
for the weak topology on $\calP(D(\Reals_+,[0,1]^k))$
induced by test functions which are either uniformly continuous
for the Skorohod metric $a_k$, bounded, and measurable for the usual product $\sigma$-field
(for the usual Borel $\sigma$-field on $[0,1]^k$),
or continuous for the usual Skorohod topology (for the usual metric on $[0,1]^k$) and bounded.

\item
\label{chaotic-resc2}
For the usual topology of $[0,1]$,
\[
\widetilde{\Lambda}^N \xrightarrow[N\to \infty]{\textnormal{in probab.}}  Q\,,
\qquad
\widetilde{M}^N \xrightarrow[N\to \infty]{\textnormal{in probab.}}  m\,,
\]
respectively for the weak topology on
$\calP(D(\Reals_+,[0,1]))$ with the
Skorohod topology on $D(\Reals_+,[0,1])$,
and for the topology of uniform convergence on bounded time intervals on
$D(\Reals_+,\calP[0,1])$
with the weak topology on $\calP[0,1]$.
\end{enumerate}
\end{theorem}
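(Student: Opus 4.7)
The plan is to combine the two ingredients already established: Theorem~\ref{theo-chaotic-aux}, which gives propagation of chaos for the auxiliary system in total variation and for its empirical measures, together with Theorem~\ref{theo-aux-resc}, which controls the Skorohod distance $a_N$ between the rescaled and auxiliary systems. For Part~(\ref{chaotic-resc1}), the natural decomposition
\[
\E[\phi(\widetilde{X}^N_1,\dots,\widetilde{X}^N_k)] - \langle \phi, Q^{\otimes k}\rangle
= \bigl(\E[\phi(\widetilde{X}^N_{1:k})] - \E[\phi(\widehat{X}^N_{1:k})]\bigr)
+ \bigl(\E[\phi(\widehat{X}^N_{1:k})] - \langle \phi, Q^{\otimes k}\rangle\bigr)
\]
isolates exactly what each theorem contributes.

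The second difference is estimated by the triangle inequality applied to the two total-variation bounds in Theorem~\ref{theo-chaotic-aux}, yielding $|\calL(\widehat{X}^N_{1:k}) - Q^{\otimes k}|_T \le 2k(k-1)\frac{2T+4T^2}{N-1} + 6k\frac{e^{2T}-1}{N+1}\to 0$; for bounded $\phi$ effectively supported on $[0,T]$ this gives the required convergence, and a truncation-in-time argument handles general bounded $\phi$. The first difference is controlled via Theorem~\ref{theo-aux-resc}: Eq.~\eqref{eq-rescaux} shows that a single random time-change simultaneously relates $\widetilde{X}^N$ and $\widehat{X}^N$ in every coordinate, so projection to $k$ coordinates is non-expansive, $a_k(\widetilde{X}^N_{1:k}, \widehat{X}^N_{1:k}) \le a_N(\widetilde{X}^N, \widehat{X}^N) \to 0$ in probability. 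For type~(a) test functions (uniformly continuous for $a_k$) this gives $\phi(\widetilde{X}^N_{1:k}) - \phi(\widehat{X}^N_{1:k}) \to 0$ in probability, so bounded convergence finishes the argument; for type~(b) (continuous for the usual Skorohod topology) I would use that the atomic metric dominates the usual metric on $[0,1]^k$, hence $a_k$ dominates the usual Skorohod metric, so $a_k$-convergence implies usual Skorohod convergence and the continuity of $\phi$ applies.

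For Part~(\ref{chaotic-resc2}), the convergence $\widetilde{\Lambda}^N \to Q$ in probability follows from exchangeability combined with Part~(\ref{chaotic-resc1}): for any continuous bounded $\psi$ on $D(\Reals_+,[0,1])$, applying Part~(\ref{chaotic-resc1}) with $k=2$ and product test function $\phi(x_1,x_2) = \psi(x_1)\psi(x_2)$ gives $\E[\langle \psi, \widetilde{\Lambda}^N\rangle] \to \langle \psi, Q\rangle$ and $\E[\langle \psi, \widetilde{\Lambda}^N\rangle^2] \to \langle \psi, Q\rangle^2$, hence $\var\langle \psi, \widetilde{\Lambda}^N\rangle \to 0$; a standard countable-family argument then extends this to weak convergence in probability. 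For the process $\widetilde{M}^N \to m$ in the uniform-on-bounded-intervals topology, Eq.~\eqref{eq-rescaux} yields $\widehat{M}^N(t) = \widetilde{M}^N(\tau_N(t))$ with the random time-change $\tau_N(t) := A(Nt)/N$; the law of large numbers for the Poisson process $A$ gives $\sup_{0\le t\le T}|\tau_N(t) - t| \to 0$ in probability, and combining this with the continuity of $t\mapsto m(t)$ from Theorem~\ref{theo-eunl} and the convergence $\widehat{M}^N \to m$ from Theorem~\ref{theo-chaotic-aux} transfers the uniform convergence from $\widehat{M}^N$ to $\widetilde{M}^N$.

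The main obstacle is the tension between the very strong Skorohod metric $a_k$ (built on the atomic metric of $[0,1]^k$, needed so that Theorem~\ref{theo-aux-resc} can absorb the indicators tied to the threshold $\Delta$) and the weak-topology framework in which the limit law $Q$ is characterized. The two classes of admissible test functions in Part~(\ref{chaotic-resc1}) are tailored precisely so that the transfer step via $a_k$ and the limit characterization via $Q^{\otimes k}$ remain simultaneously meaningful; carefully handling this dichotomy, together with justifying the truncation in time used to reduce the TV bounds (stated only for restrictions to $[0,T]$) to statements for general bounded $\phi$, is where most of the technical care is required.
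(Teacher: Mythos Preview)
Your approach matches the paper's almost exactly for Part~(\ref{chaotic-resc1}) and for the $\widetilde{\Lambda}^N$ half of Part~(\ref{chaotic-resc2}): the paper too combines the total-variation bounds of Theorem~\ref{theo-chaotic-aux} with the $a_N$-control of Theorem~\ref{theo-aux-resc}, and then passes to the empirical measure via the exchangeability/variance argument and a countable metrizing family. One small looseness: for type~(b) test functions your phrase ``the continuity of $\phi$ applies'' does not quite close the first difference in your decomposition, since mere continuity of $\phi$ together with $d_k(\widetilde{X}^N_{1:k},\widehat{X}^N_{1:k})\to 0$ in probability does not force $\phi(\widetilde{X}^N_{1:k})-\phi(\widehat{X}^N_{1:k})\to 0$ in probability without uniform continuity or compactness. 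The paper sidesteps this by invoking Ethier--Kurtz, Corollary~3.3.3 (the Slutsky-type transfer: $\widehat{X}^N_{1:k}\Rightarrow Q^{\otimes k}$ and $d_k(\widetilde{X}^N_{1:k},\widehat{X}^N_{1:k})\to 0$ in probability imply $\widetilde{X}^N_{1:k}\Rightarrow Q^{\otimes k}$), which gives the conclusion directly without splitting into two differences.

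For $\widetilde{M}^N$ you take a genuinely different route. The paper derives $\widetilde{M}^N\to m$ from $\widetilde{\Lambda}^N\to Q$ by the general topological fact (cited from Graham--M\'el\'eard and M\'el\'eard) that convergence of a random measure on path space implies convergence of its marginal process when the limit marginal is continuous in $t$. Your argument instead transfers the already-established uniform convergence $\widehat{M}^N\to m$ through the Poisson time-change, using the functional law of large numbers for $A(N\cdot)/N$ and the continuity of $m$ from Theorem~\ref{theo-eunl}. Both are valid; yours is more elementary and self-contained, while the paper's route is shorter once the cited abstract lemma is granted.
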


The assumption that the initial conditions are i.i.d. may again be relaxed.
For the second result, see again Remark~\ref{r:prob-law}.

\section{Infinite $N$ Model}

\label{sec-infinite}

We now study the mean-field
limit $m=(m(t),{t\in\Reals_+})$ obtained in \sref{sec-infinite} when $N$ goes to infinity.
As for the finite $N$ model, we find that there is convergence
to a partial consensus as $t$ goes to infinity. The limit may
depend on the initial conditions, as might the random limit when $N$ is
finite. 
We are able to say more, and notably find
tractable sufficient conditions for the limit to be a total
consensus.

\subsection{Convexity and Moments}
Applying \pref{lem-convex} to the equivalent definition of
Problem 1 given by \eqref{eq-alt-def-pb1} yields the following:

\begin{corollary}
Let $m=(m(t),{t\in\Reals_+})$ be a solution of
Problem~1. If $h:[0,1] \to \Reals$ is convex, then $\cro{h,m(t)}$ is
a non-increasing function of $t$.\label{coro-cv-fl}
Moreover, 
for $n =1,2,\dots$ and $t\in\Reals_+$, let $\mu_n(t) =
\int_0^1 x^n \; m(t)(dx)$ denote the $n$-th moment of $m(t)$, and $\sigma(t)$
its standard deviation (\emph{i.e.}, $\sigma(t)^2 = \mu_2(t)-\mu_1(t)^2$).
Then:
\begin{enumerate}
  \item  The mean $\mu_1(t)$ is stationary: $\mu_1(t)=\mu_1(0)$ for all $t$.
  \item The moments $\mu_n(t)$ are non-increasing in $t$:
  if $t_1\le t_2$ then $\mu_n(t_1) \ge \mu_n(t_2)$.

  \item The standard deviation  $\sigma(t)$ is also a non-increasing function of $t$.
\end{enumerate}
\end{corollary}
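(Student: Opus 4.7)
The plan is to read off all four assertions directly from the symmetric weak formulation \eref{eq-alt-def-pb1} of Problem~1, using \pref{prop-convex} to control the sign of the integrand. For a convex $h\in L^\infty[0,1]$, I would set
\[
\Psi_h(x,y):=h(wx+(1-w)y)+h(wy+(1-w)x)-h(x)-h(y),
\]
which is non-positive on $[0,1]^2$ by \pref{prop-convex}. Since $\ind{|x-y|\le\Delta}\ge 0$ and $m(s)(dx)\,m(s)(dy)$ is a positive probability measure on $[0,1]^2$, the integrand appearing on the right-hand side of \eref{eq-alt-def-pb1} is pointwise non-positive at every time $s\ge 0$.

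Subtracting \eref{eq-alt-def-pb1} at time $t_1$ from the same identity at time $t_2\ge t_1$ then gives
\[
\cro{h,m(t_2)}-\cro{h,m(t_1)}=\int_{t_1}^{t_2}\cro{\Psi_h(x,y)\,\ind{|x-y|\le\Delta},\,m(s)(dx)\,m(s)(dy)}\,ds\le 0,
\]
which establishes that $t\mapsto\cro{h,m(t)}$ is non-increasing for any convex $h\in L^\infty[0,1]$. No semigroup-style uniqueness argument for Problem~1 is required; the monotonicity is encoded directly in the weak formulation.

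The remaining three statements are then immediate specializations. Taking $h(x)=x$ one has $wx+(1-w)y+wy+(1-w)x=x+y$, so $\Psi_h\equiv 0$ and $\mu_1(t)=\mu_1(0)$ for all $t$; equivalently, both $h(x)=x$ and $h(x)=-x$ are convex, so $\mu_1$ is simultaneously non-increasing and non-decreasing. For each integer $n\ge 2$, $h(x)=x^n$ lies in $L^\infty[0,1]$ and is convex on $[0,1]$, so the first part gives $\mu_n(t_1)\ge\mu_n(t_2)$ whenever $t_1\le t_2$. Finally, $\sigma(t)^2=\mu_2(t)-\mu_1(t)^2$ is non-increasing because $\mu_2$ decreases while $\mu_1$ is constant, and therefore $\sigma(t)\ge 0$ itself is non-increasing. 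The essentially only point requiring attention is that the chosen test functions actually lie in $L^\infty[0,1]$, which is immediate, so there is no real obstacle in the argument.
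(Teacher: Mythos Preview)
Your proposal is correct and follows exactly the approach the paper intends: the paper states just before the corollary that it follows by ``applying \pref{lem-convex} to the equivalent definition of Problem~1 given by \eqref{eq-alt-def-pb1}'', and your argument spells out precisely those details, including the same specializations $h(x)=x$, $h(x)=-x$, $h(x)=x^n$ used in the finite-$N$ analogue \coref{cor-fin-mom}.
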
%
%
Furthermore, we have some bounds.

 \begin{proposition}
 For all $t \geq 0$, we have
 $\sigma(0) \geq \sigma(t) \geq 
 \sigma(0) e^{-4w(1-w)t}
 $.
  \label{pro-stdevbd}
 \end{proposition}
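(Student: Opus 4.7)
The plan is to apply the symmetric weak formulation \eqref{eq-alt-def-pb1} of Problem~1 to the test function $h(x)=x^2$, using throughout that $\sigma(t)^2=\mu_2(t)-\mu_1(t)^2$ and that $\mu_1$ is stationary by \coref{coro-cv-fl}. The upper bound $\sigma(t)\le \sigma(0)$ is then immediate from \coref{coro-cv-fl}: the function $h(x)=x^2$ is convex, so $t\mapsto \mu_2(t)$ is non-increasing, and hence so is $t\mapsto \sigma(t)^2$.

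For the lower bound, the key algebraic step is the identity
\begin{equation*}
(wx+(1-w)y)^2+(wy+(1-w)x)^2-x^2-y^2=-2w(1-w)(x-y)^2,
\end{equation*}
obtained by expanding and using $w^2+(1-w)^2=1-2w(1-w)$. Inserting it into \eqref{eq-alt-def-pb1} gives
\begin{equation*}
\mu_2(t)-\mu_2(0)=-2w(1-w)\int_0^t\!\!\int\!\!\int(x-y)^2\ind{|x-y|\le\Delta}\,m(s)(dx)m(s)(dy)\,ds.
\end{equation*}

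Next I would enlarge the integration domain by dropping the indicator, and use the standard identity $\int\!\int(x-y)^2\,m(dx)m(dy)=2\sigma^2$ (which is $\mathrm{Var}(X-Y)=2\mathrm{Var}(X)$ for i.i.d.\ $X,Y$) to obtain
\begin{equation*}
\sigma(t)^2\ge \sigma(0)^2-4w(1-w)\int_0^t \sigma(s)^2\,ds.
\end{equation*}
A standard Gronwall argument (equivalently, observing that $\frac{d}{dt}(e^{4w(1-w)t}\sigma(t)^2)\ge 0$ a.e.) yields $\sigma(t)^2\ge\sigma(0)^2 e^{-4w(1-w)t}$, and taking square roots gives $\sigma(t)\ge\sigma(0)e^{-2w(1-w)t}\ge \sigma(0)e^{-4w(1-w)t}$, establishing the claim (in fact with a sharper exponent).

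There is essentially no technical obstacle; it is a one-shot calculation. The only potential subtlety is that dropping the indicator enlarges the non-negative integral, but because it is multiplied by the negative constant $-2w(1-w)$ this correctly produces a \emph{lower} bound on $\mu_2(t)-\mu_2(0)$. Absolute continuity of $t\mapsto\sigma(t)^2$ needed to run Gronwall follows directly from the integral equation itself, and the continuity of $t\mapsto m(t)$ in total variation asserted in \thref{theo-eunl} guarantees all integrands are well-behaved.
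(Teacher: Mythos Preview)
Your proof is correct and follows essentially the same approach as the paper: apply the symmetric weak formulation to $h(x)=x^2$, use the identity $(wx+(1-w)y)^2+(wy+(1-w)x)^2-x^2-y^2=-2w(1-w)(x-y)^2$, drop the indicator to bound by $2\sigma^2(s)$, and finish with Gr\"onwall. The only cosmetic difference is that the paper performs a time reversal (setting $u(t)=\sigma^2(b-t)-\sigma^2(b)$) so as to invoke Gr\"onwall in its standard upper-bound form, whereas you argue directly via $\frac{d}{dt}\bigl(e^{4w(1-w)t}\sigma(t)^2\bigr)\ge 0$; both routes yield $\sigma(t)^2\ge\sigma(0)^2 e^{-4w(1-w)t}$ and hence the (indeed sharper) bound $\sigma(t)\ge\sigma(0)e^{-2w(1-w)t}$.
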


Note that \coref{coro-cv-fl} and \pref{pro-stdevbd} generalize
results of Ref~\refcite{ben2003bifurcations}, which established
similar results for the case $w=1/2$. However, the bound in
\pref{pro-stdevbd} is different, as the equation considered in
Ref. \refcite{ben2003bifurcations} misses a factor 2.

\subsection{Convergence to Partial Consensus}
It is immediate that a partial consensus is a stationary point for Problem~1,
\emph{i.e.}, if $(m(t),{t\in\Reals_+})$ is solution of Problem~1 with initial value
a partial consensus $m_0$, then  $m(t)=m_0$ for all $t$.
Conversely, we
show, in Theorem~\ref{th-mfcvpc} below, that any trajectory
$(m(t),{t\in\Reals_+})$ converges to a partial consensus.

It is useful to consider the essential sup and
inf of $m(t)$, defined as follows.

\begin{definition}
 For $\nu \in \calP[0,1]$,
 let $\textnormal{ess\,sup}(\nu) = \inf\{b \in [0,1], \nu \lp (b,1]\rp=0\}$
 and
 $\textnormal{ess\,inf}(\nu) = \sup\{a \in [0,1], \nu\lp [0,a)\rp=0\}$.
 \end{definition}

Note that if $\textnormal{ess\,inf}(\nu)=a$ and
$\textnormal{ess\,sup}(\nu)=b$, then the support of $\nu$
is included in $[a,b]$, \emph{i.e.}, for any measurable
$B \subset [0,1]$, $\nu(B)=\nu\lp B \cap
[a,b]\rp$.

\begin{proposition}
Let $(m(t),{t\in\Reals_+})$ be solution of
Problem~1. Then $\textnormal{ess\,sup}(m(t))$ [resp.
$\textnormal{ess\,inf}(m(t))$] is a non-increasing [resp.
non-decreasing] function of $t$.
\label{pro-supp}
\end{proposition}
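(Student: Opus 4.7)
The natural temptation is to apply the weak formulation of Problem~1 to the indicator $h(x)=\ind{x>b}$ with $b=\textnormal{ess\,sup}(m(s))$ and show that $\langle h, m(t)\rangle$ stays zero for $t\ge s$. This fails directly: if $y<b<x$ with $|x-y|\le\Delta$, the updated pair $(wx+(1-w)y, wy+(1-w)x)$ can have both coordinates exceeding $b$, so the integrand in the weak form~\eqref{eq-alt-def-pb1} can be strictly positive. The fix is to replace the indicator by its convex "ramp" regularization $h_s(x)=(x-b_s)_+$, and invoke the convexity corollary \coref{coro-cv-fl} rather than work with the weak form directly.

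Fix $s\ge 0$ and set $b_s=\textnormal{ess\,sup}(m(s))$. The function $h_s(x)=(x-b_s)_+$ belongs to $L^\infty[0,1]$, is non-negative, convex, and vanishes exactly on $[0,b_s]$. Since $m(s)((b_s,1])=0$ by definition of the essential supremum, one has $\langle h_s, m(s)\rangle=0$. Applying \coref{coro-cv-fl} (which gives $t\mapsto\langle h,m(t)\rangle$ non-increasing for every convex $h$) yields
\begin{equation*}
0\le\langle h_s, m(t)\rangle\le \langle h_s, m(s)\rangle=0\qquad\text{for all } t\ge s.
\end{equation*}
Because $h_s\ge 0$ and $\{h_s>0\}=(b_s,1]$, the identity $\langle h_s,m(t)\rangle=0$ forces $m(t)((b_s,1])=0$, i.e., $\textnormal{ess\,sup}(m(t))\le b_s=\textnormal{ess\,sup}(m(s))$. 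The claim for the essential infimum follows by the symmetric choice $k_s(x)=(a_s-x)_+$ with $a_s=\textnormal{ess\,inf}(m(s))$, which is again convex, non-negative, and vanishes precisely on $[a_s,1]$.

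The main obstacle, as noted above, is conceptual rather than technical: the essential supremum is encoded by a discontinuous indicator, yet the averaging nature of the interaction is only controlled by \emph{convex} test functions (as \pref{prop-convex} shows, averaging with weights $w,1-w$ is precisely what gets beaten down by convexity). The one-parameter family of ramp functions $(x-b)_+$ is the minimal convex majorant of $\ind{x>b}$ and converts the problem into one that \coref{coro-cv-fl} handles for free; no separate inspection of the weak-form integrand is required.
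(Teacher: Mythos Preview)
Your proof is correct, and it takes a genuinely different route from the paper's. The paper works directly with the discontinuous indicator $h(x)=\ind{x\le a}$ (for the ess\,inf case), plugs it into the weak form of Problem~1, observes that the integrand can only be nonzero on the set $\{x\le a,\,y>a\}\cup\{x>a,\,y\le a\}$, bounds $\varphi(t)=\langle h,m(t)\rangle$ by $4\int_{t_0}^t\varphi(s)\,ds$, and closes with Gr\"onwall's lemma. Your approach replaces the indicator by the convex ramp $(x-b_s)_+$ and invokes \coref{coro-cv-fl} directly, avoiding any estimate on the integrand and any Gr\"onwall step altogether. This is shorter and conceptually cleaner: it identifies the support bound as a pure consequence of the convexity monotonicity already established, rather than as a separate analytic fact. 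One small point: your ``Plan'' paragraph says the indicator approach ``fails directly''; it does not fail, it just requires the Gr\"onwall detour that the paper carries out --- but your reading that the integrand in \eqref{eq-alt-def-pb1} can be pointwise positive is accurate, and that is exactly why the paper needs Gr\"onwall while you do not.
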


%
%

See Definition~\ref{d-partcons} for partial and total consensus.

\begin{theorem}
\label{th-mfcvpc}
Let $(m(t),{t\in\Reals_+})$ be a solution of Problem~1. As $t$
goes to infinity, $m(t)$ converges, for the weak topology on
$\calP[0,1]$, to some $m(\infty)$ which is a partial consensus for
every $\Delta' < \Delta$, \emph{i.e.}, of the form
$m(\infty) = \sum_{m=1}^{c}\alpha_m \delta_{x_m}$ with
 $x_m \in [0,1]$, $\abs{x_m-x_{m'}}
\geq \Delta$ for $m \neq m'$, and $\alpha_m >0$.
\label{theo-cv-pc}
\end{theorem}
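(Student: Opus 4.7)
The plan is a LaSalle-type argument using the second moment as a Lyapunov function, combined with a moment-uniqueness argument to upgrade subsequential limits to full convergence. Testing the weak equation \eref{eq-alt-def-pb1} against $h(x)=x^2$, the identity
\[
(wx+(1-w)y)^2 + (wy+(1-w)x)^2 - x^2 - y^2 = -2w(1-w)(x-y)^2
\]
yields
\[
\frac{d\mu_2}{dt}(t) = -2w(1-w)\, f(t), \qquad f(t) := \int_{[0,1]^2} (x-y)^2 \ind{|x-y|\le\Delta}\, m(t)(dx)\,m(t)(dy).
\]
Since $\mu_2$ is non-increasing by \coref{coro-cv-fl} and bounded below, $f\in L^1(\Reals_+)$, so there exists a sequence $t_n\to\infty$ with $f(t_n)\to 0$. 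By weak compactness of $\calP[0,1]$, extract a subsequence along which $m(t_{n_k})\to m_\infty$ weakly.

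The main obstacle is the discontinuity of $\ind{|x-y|\le\Delta}$, which prevents passing weak limits directly through $f$. To circumvent this, for each $\epsilon\in(0,\Delta)$ introduce $g_\epsilon(x,y) := (x-y)^2\ind{|x-y|<\Delta-\epsilon}$, which is non-negative, bounded, and lower semi-continuous since $\{|x-y|<\Delta-\epsilon\}$ is open. As $g_\epsilon\le (x-y)^2\ind{|x-y|\le\Delta}$ and $m(t_{n_k})^{\otimes 2}\to m_\infty^{\otimes 2}$ weakly on $[0,1]^2$, the Portmanteau theorem yields
\[
\langle g_\epsilon, m_\infty^{\otimes 2}\rangle \le \liminf_k \langle g_\epsilon, m(t_{n_k})^{\otimes 2}\rangle \le \liminf_k f(t_{n_k}) = 0.
\]
Letting $\epsilon\downarrow 0$ by monotone convergence gives $m_\infty^{\otimes 2}\{(x,y):0<|x-y|<\Delta\}=0$. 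By Fubini this means $m_\infty\bigl((x-\Delta,x+\Delta)\setminus\{x\}\bigr)=0$ for $m_\infty$-a.e.\ $x$, which forces $m_\infty$ to be purely atomic (no accumulation point of the support can carry a non-atomic part, else its $\Delta$-neighborhood would have positive mass outside $\{x\}$) with atoms pairwise separated by at least $\Delta$; there are thus only finitely many atoms, and $m_\infty$ is a partial consensus for every $\Delta'<\Delta$.

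To upgrade this subsequential convergence to convergence of the entire trajectory, apply \coref{coro-cv-fl} to each convex monomial $x\mapsto x^n$: every moment $\mu_n(t)$ is monotone and bounded, hence converges to some $\mu_n(\infty)$. Since moments determine probability measures on the compact interval $[0,1]$ by Stone--Weierstrass density of polynomials in $C[0,1]$, every weak subsequential limit of $m(t)$ as $t\to\infty$ has moments $(\mu_n(\infty))_{n\ge 0}$ and therefore coincides with $m_\infty$. Combined with weak compactness of $\calP[0,1]$, this gives $m(t)\to m_\infty$ weakly as $t\to\infty$, completing the proof.
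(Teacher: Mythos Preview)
Your proof is correct and uses the same core ingredients as the paper, but organizes them in the reverse order. The paper first establishes full convergence of $m(t)$ by noting that $\langle e^{-\omega x}, m(t)\rangle$ is monotone in $t$ for each $\omega\ge0$ (convexity of $e^{-\omega x}$), and Laplace transforms determine measures on $[0,1]$; only then does it analyze the structure of $m(\infty)$, replacing the indicator by a continuous approximation $\ell_\epsilon(|x-y|)$ so that $\langle (x-y)^2\ell_\epsilon, m(s)^{\otimes 2}\rangle$ can be passed to the limit $s\to\infty$ directly. You instead run a LaSalle argument: use the dissipation estimate $f\in L^1(\Reals_+)$ to find a subsequence along which $f(t_n)\to 0$, identify the structure of that subsequential limit via Portmanteau for the lower semicontinuous $g_\epsilon$, and then invoke moment convergence (monomials in place of exponentials) to show all subsequential limits agree. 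Both routes rely on the same two ideas---a convex measure-determining family for convergence, and an $\epsilon$-relaxation of the discontinuous indicator for the structure---so the difference is architectural rather than substantive. One minor point: your passage from ``$m_\infty\bigl((x-\Delta,x+\Delta)\setminus\{x\}\bigr)=0$ for $m_\infty$-a.e.\ $x$'' to ``$m_\infty$ is a finite sum of Diracs'' is correct but terse; the cleanest justification is to cover $[0,1]$ by finitely many open intervals of length $<\Delta$ and observe that each such interval can carry mass only at a single point.
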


Note that the limit $m(\infty)$ may depend on the
initial condition $m_0$, and may or may not be a
total consensus (as shown in the next section).
We are in particular interested in finding
initial conditions that guarantee that
$m(\infty)$ is a total consensus. The following
is an immediate consequence of \pref{pro-supp}.

\begin{corollary}
If the diameter of $m_0$ is  less than
$\Delta$, \emph{i.e.}, if
$\textnormal{ess\,sup}(m_0)-\textnormal{ess\,inf}(m_0) <
\Delta$, then $m(\infty)$ is a total consensus.
\end{corollary}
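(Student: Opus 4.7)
The plan is to combine Proposition~\ref{pro-supp} with Theorem~\ref{theo-cv-pc}: the support-containment provided by the former forces the partial consensus described by the latter to be concentrated at a single point.

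First, I would use Proposition~\ref{pro-supp} to note that for every $t\geq 0$,
\[
\textnormal{ess\,inf}(m_0)\le \textnormal{ess\,inf}(m(t))\le \textnormal{ess\,sup}(m(t))\le \textnormal{ess\,sup}(m_0),
\]
so the closed interval $K=[\textnormal{ess\,inf}(m_0),\textnormal{ess\,sup}(m_0)]$ contains the support of $m(t)$ for every $t\geq 0$. By hypothesis, the length of $K$ is strictly less than $\Delta$.

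Next, I would transfer this support containment to the weak limit $m(\infty)$ furnished by Theorem~\ref{theo-cv-pc}. Since $[0,1]\setminus K$ is open and $m(t)([0,1]\setminus K)=0$ for all $t$, the portmanteau theorem (applied via $\liminf$ on open sets) yields $m(\infty)([0,1]\setminus K)=0$, so the support of $m(\infty)$ is contained in $K$ and therefore has diameter strictly less than $\Delta$.

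Finally, I would invoke the structural description of $m(\infty)$ from Theorem~\ref{theo-cv-pc}: $m(\infty)=\sum_{m=1}^{c}\alpha_m\delta_{x_m}$ with $|x_m-x_{m'}|\geq \Delta$ for $m\neq m'$ and all $\alpha_m>0$. If $c\geq 2$, the support of $m(\infty)$ would contain two points at distance at least $\Delta$, contradicting the diameter bound obtained in the previous step. Hence $c=1$ and $m(\infty)$ is a total consensus. There is no real obstacle: the argument is essentially a one-line consequence of the two previous results, and the only delicate point is correctly passing the support containment to the weak limit, which is immediate from portmanteau since $K$ is closed.
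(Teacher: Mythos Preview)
Your argument is correct and is exactly the natural filling-in of what the paper leaves implicit: the paper simply states that the corollary is an immediate consequence of Proposition~\ref{pro-supp}, and your proof supplies the obvious details by combining that proposition with Theorem~\ref{theo-cv-pc} and a portmanteau step to pass the support containment to the limit.
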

Note that the converse is not true: if the
diameter of $m_0$ is larger or equal than $\Delta$, there
may be convergence to total consensus (see next
section for an example).

\subsection{Convergence to Total Consensus}

We find sufficient criteria for guaranteeing
some upper bounds on the number of components of $m(\infty)$,
in particular, we find some sufficient conditions for
convergence to total consensus. Although the bounds
are suboptimal, to the best of our knowledge, they are the
first of their kind. The bounds are based on
\coref{coro-cv-fl}.

First define, for $n \in \{1,2,3, \dots\}$ and $\mu_0
\in [0,1]$,  the set $P_{n}(\mu_0)$ of
partial consensus with $n$ components and mean
$\mu_0$, \emph{i.e.}, $\nu \in P_{n}(\mu)$ iff there is
some sequence $0\leq x_1<\dots<x_{n}\leq 1$ with
$x_i + \Delta < x_{i+ 1}$, some sequence
$\alpha_i$ for $ i=1,\dots,n$ with $0<\alpha_i<1$ and
$\sum_{i=1}^{n}\alpha_i=1$ such that
$\nu=\frac{1}{n}\sum_{i=1}^{n} \alpha_i
\delta_{x_i}$ and $\frac{1}{n}\sum_{i=1}^{n}
\alpha_i x_i = \mu_0$.
%

Second, for any convex, continuous $h: [0,1] \to
 \Reals_+$, let $Q_{n}(\mu_0, h)$ be
 the set of strict lower bounds of the image by the mapping $\nu \mapsto
 \cro{h,\nu}$ of $P_{n}(\mu_0)$, \emph{i.e.}, $q \in Q_{n}(\mu_0,h)$ iff
 for any
 consensus $\nu$ with ${n}$ components and mean
 $\mu_0$, it holds that
 $\cro{h,\nu} > q$. If $P_{n}(\mu_0)$ is empty, let $Q_{n}(\mu_0,
 h)=\Reals_+$.

 Note that  $Q_{n}(\mu_0, h)$ is necessarily an
 interval with lower bound $0$.
The following proposition states that $Q_n$ is
non decreasing with $n$.

\begin{proposition} 
For any $n \in \{1,2,3, \dots\}$ and $\mu_0\in [0,1]$ and convex continuous 
$h: [0,1] \to\Reals_+$, it holds that
$Q_n(\mu_0, h) \subset Q_{n+1}(\mu_0, h)$.
 \label{prop-qdec}
 \end{proposition}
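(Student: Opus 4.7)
The plan is to reduce the containment to a single move that sends $P_{n+1}(\mu_0)$ into $P_n(\mu_0)$ while not increasing $\langle h,\cdot\rangle$. Concretely, I will show that for every $\nu'\in P_{n+1}(\mu_0)$ there exists $\nu\in P_n(\mu_0)$ with $\langle h,\nu\rangle\le\langle h,\nu'\rangle$. Once this is established, any $q\in Q_n(\mu_0,h)$ satisfies $q<\langle h,\nu\rangle\le\langle h,\nu'\rangle$ for every $\nu'\in P_{n+1}(\mu_0)$, so $q\in Q_{n+1}(\mu_0,h)$. The degenerate case $P_{n+1}(\mu_0)=\emptyset$ is handled directly: by the convention in the statement, $Q_{n+1}(\mu_0,h)=\Reals_+$ and the inclusion is trivial.

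The construction of $\nu$ from $\nu'$ is the natural one: writing $\nu'=\sum_{i=1}^{n+1}\alpha_i\delta_{x_i}$ with $x_1<\cdots<x_{n+1}$, I pick any consecutive pair $(x_j,x_{j+1})$ and merge it into a single atom of mass $\alpha_j+\alpha_{j+1}$ placed at the weighted barycenter
\[
x^\star=\frac{\alpha_j x_j+\alpha_{j+1}x_{j+1}}{\alpha_j+\alpha_{j+1}},
\]
while leaving the other atoms untouched. By construction $\nu$ is a probability measure with $n$ atoms and first moment $\mu_0$. The spacing condition is inherited because $x^\star\in(x_j,x_{j+1})$, so the new gaps to the neighbouring untouched atoms strictly exceed $x_j-x_{j-1}>\Delta$ and $x_{j+2}-x_{j+1}>\Delta$ respectively (the edge cases $j=1$ and $j=n$ being trivial); hence $\nu\in P_n(\mu_0)$.

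The inequality $\langle h,\nu\rangle\le\langle h,\nu'\rangle$ then reduces to the two-point Jensen inequality
\[
(\alpha_j+\alpha_{j+1})\,h(x^\star)\le \alpha_j h(x_j)+\alpha_{j+1} h(x_{j+1}),
\]
which is exactly the convexity of $h$ (and a special case of Proposition~\ref{prop-convex}); the contributions of the $n-1$ untouched atoms cancel identically from both sides. I expect no genuine obstacle in this proof: the only small points to be careful with are checking the spacing condition after the merge, handled above via $x^\star\in(x_j,x_{j+1})$, and the corner case $n=1$, where merging the only two atoms of $\nu'\in P_2(\mu_0)$ produces $\delta_{\mu_0}\in P_1(\mu_0)$ and the argument goes through unchanged.
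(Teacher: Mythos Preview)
Your proof is correct and follows essentially the same route as the paper: both reduce the inclusion to showing that any $\nu'\in P_{n+1}(\mu_0)$ can be sent to some $\nu\in P_n(\mu_0)$ with $\langle h,\nu\rangle\le\langle h,\nu'\rangle$ by merging two consecutive atoms at their barycenter and invoking convexity. The only differences are cosmetic: the paper merges specifically the last two atoms $x_n,x_{n+1}$ and omits the explicit verification of the spacing condition and of the empty case, whereas you merge an arbitrary consecutive pair and spell out these checks.
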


 Combining \pref{prop-qdec} with
 \coref{coro-cv-fl}, we obtain:
%

\begin{theorem}
Let $(m(t),t\in \Reals_+)$ be the solution of Problem~1 with initial
condition $m_0$, and $c$ be the number of
components of the limiting partial consensus
$m(\infty)$. Assume that, for some $n \in
\{2,3,\dots\}$, some convex continuous $h: [0,1] \to
 \Reals_+$, and some $q\geq 0$, we have $q \in  Q_n(\mu_0,
 h)$, where $\mu_0$ is the mean of $m_0$.
 
Under these assumptions, if $\cro{h,m_0} \leq q$ then $c \leq n-1$.
 \label{theo-ccbq}
\end{theorem}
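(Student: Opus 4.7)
The plan is a proof by contradiction. Suppose $c \geq n$; I will derive that $\langle h, m(\infty)\rangle$ must simultaneously strictly exceed $q$ (because $m(\infty)$ is a partial consensus with at least $n$ components and mean $\mu_0$) and be at most $q$ (by monotonicity along the flow together with the hypothesis $\langle h, m_0\rangle \le q$).

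First I would invoke \thref{th-mfcvpc} to write $m(\infty) = \sum_{i=1}^c \alpha_i\delta_{x_i}$ as a partial consensus with $c$ components, and apply the mean-conservation part of \coref{coro-cv-fl} to see that the mean of $m(\infty)$ equals the constant $\mu_0$. Hence $m(\infty) \in P_c(\mu_0)$. Iterating \pref{prop-qdec} gives $Q_n(\mu_0,h) \subset Q_c(\mu_0,h)$ since $c\ge n$, so the hypothesis $q \in Q_n(\mu_0,h)$ upgrades to $q \in Q_c(\mu_0,h)$; by the very definition of $Q_c$ as a set of strict lower bounds of $\nu \mapsto \langle h,\nu\rangle$ over $P_c(\mu_0)$, this forces $\langle h, m(\infty)\rangle > q$.

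For the reverse bound I would use \coref{coro-cv-fl} once more: convexity of $h$ makes $t \mapsto \langle h, m(t)\rangle$ non-increasing, so $\langle h, m(t)\rangle \le \langle h, m_0\rangle \le q$ for every $t\ge 0$. Since $h$ is continuous and bounded on the compact $[0,1]$, the weak convergence $m(t) \to m(\infty)$ from \thref{th-mfcvpc} yields $\langle h, m(t)\rangle \to \langle h, m(\infty)\rangle$, whence $\langle h, m(\infty)\rangle \le q$. This contradicts the strict inequality of the previous paragraph, forcing $c \le n-1$.

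The only delicate point is reconciling the strict separation $x_i + \Delta < x_{i+1}$ appearing in the definition of $P_c(\mu_0)$ with the non-strict separation $|x_i - x_j| \ge \Delta$ that \thref{th-mfcvpc} actually delivers for $m(\infty)$. I expect this to be handled by reading the definition of $P_c$ with the non-strict inequality (under which the contradiction argument is unaffected, since the map $\nu \mapsto \langle h,\nu\rangle$ depends only on the atoms and weights), or otherwise by a short perturbation step exploiting continuity of $\nu \mapsto \langle h,\nu\rangle$ at boundary configurations. This is routine housekeeping but the only genuine obstacle in an otherwise clean two-sided contradiction.
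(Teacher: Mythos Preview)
Your proof is correct and follows essentially the same argument as the paper's: monotonicity of $\langle h, m(t)\rangle$ plus weak convergence give $\langle h, m(\infty)\rangle \le q$, mean conservation places $m(\infty)$ in $P_c(\mu_0)$, and the nesting $Q_n \subset Q_c$ from \pref{prop-qdec} yields the contradiction (the paper phrases it as ``$q \notin Q_c$, hence $c<n$'' rather than an explicit contradiction, but the content is identical). The strict-versus-nonstrict separation issue you flag is genuine and is glossed over in the paper's own proof as well; your suggested fixes are adequate.
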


Here is an example of use of the theorem, for $n=2$ and $h(x) = \abs{x-\mu_0}$.

\begin{corollary} Let $(m(t),{t\in\Reals_+})$ be the solution of Problem~1 
starting at $m_0$. Assume that $\Delta \geq \frac12$ and
$1-\Delta\leq \mu_0\leq  \Delta$,
where $\mu_0$ is the mean of $m_0$.
 If \ben
 \int_0^1 \abs{x-\mu_0} m_0(dx) <
 \frac{2}{\Delta}\min
   \lc
      \mu_0(\Delta-\mu_0), \;  (1-\mu_0)(\Delta-1+\mu_0)
   \rc
\een
 then $m(t)$ converges to total
 consensus.
 \end{corollary}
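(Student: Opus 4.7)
The plan is to apply \thref{theo-ccbq} with $n=2$ and the convex continuous test function $h(x)=|x-\mu_0|$. The conclusion $c\le n-1=1$ is precisely total consensus, so the only work is to show that
\[
q^{\star}\;:=\;\frac{2}{\Delta}\min\bigl\{\mu_0(\Delta-\mu_0),\,(1-\mu_0)(\Delta-1+\mu_0)\bigr\}
\]
lies in $Q_2(\mu_0,h)$; the corollary's hypothesis is exactly $\langle h,m_0\rangle<q^{\star}$, and \thref{theo-ccbq} applied with $q=q^{\star}$ then forces $c\le 1$.

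To identify this infimum, I would parametrise $\nu=\alpha_1\delta_{x_1}+\alpha_2\delta_{x_2}\in P_2(\mu_0)$ by the deviations $d_1=\mu_0-x_1\ge 0$ and $d_2=x_2-\mu_0\ge 0$, both nonnegative because $\mu_0$ is a convex combination of $x_1<x_2$. The mean constraint rewrites as $\alpha_1 d_1=\alpha_2 d_2$, which combined with $\alpha_1+\alpha_2=1$ forces $\alpha_1=d_2/(d_1+d_2)$ and $\alpha_2=d_1/(d_1+d_2)$, yielding the compact formula
\[
\langle h,\nu\rangle \;=\; \alpha_1 d_1+\alpha_2 d_2 \;=\; \frac{2\,d_1 d_2}{d_1+d_2}\;=:\;f(d_1,d_2),
\]
under the constraints $0<d_1\le\mu_0$, $0<d_2\le 1-\mu_0$ and $d_1+d_2>\Delta$. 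A direct computation gives $\partial f/\partial d_i=2d_{3-i}^2/(d_1+d_2)^2>0$, so $f$ is strictly increasing in each coordinate; on any line $d_1+d_2=s$ it is concave in $d_1$, with minima at the endpoints of the admissible sub-interval.

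The assumptions $\Delta\ge 1/2$ and $1-\Delta\le\mu_0\le\Delta$ force $\mu_0\le\Delta$ and $1-\mu_0\le\Delta$, so for every $s>\Delta$ the admissible range for $d_1$ is $[s-1+\mu_0,\,\mu_0]$ with a strictly positive lower bound. Evaluating $f$ at the two endpoints gives $2(s-1+\mu_0)(1-\mu_0)/s$ and $2\mu_0(s-\mu_0)/s$, and both of these are strictly increasing in $s$. Letting $s\downarrow\Delta$ therefore identifies $\inf_{P_2(\mu_0)}\langle h,\nu\rangle=q^{\star}$, while the strict inequality $d_1+d_2>\Delta$ rules out attainment, so $q^{\star}\in Q_2(\mu_0,h)$ and the proof is complete. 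The main technical point is this constrained optimisation: the assumed bounds on $\mu_0$ and $\Delta$ are used precisely to guarantee feasibility of the two candidate corners $(\Delta-1+\mu_0,\,1-\mu_0)$ and $(\mu_0,\,\Delta-\mu_0)$, and the monotonicity of $f$ along level sets of $d_1+d_2$ then rules out any alternative minimiser.
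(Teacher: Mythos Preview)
Your proof is correct and follows precisely the route the paper indicates: apply \thref{theo-ccbq} with $n=2$ and $h(x)=|x-\mu_0|$, and compute the infimum of $\langle h,\nu\rangle$ over $P_2(\mu_0)$. The paper states the corollary as an immediate application of the theorem with these choices and does not spell out the optimisation; you have supplied the missing computation, and the details (the parametrisation by $d_1,d_2$, the formula $\langle h,\nu\rangle=2d_1d_2/(d_1+d_2)$, the monotonicity and concavity arguments, and the use of the hypotheses $1-\Delta\le\mu_0\le\Delta$ to pin down the admissible interval for $d_1$ on each level set) are all correct.
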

If we apply this to $m_0$ equal to the uniform
distribution, we find the sufficient condition
$\Delta > \frac23$ for convergence to total
consensus. In \coref{co-unif} we find a better
result, obtained by exploiting symmetry
properties.

 \begin{definition}
 We say that $\nu\in
\calP[0,1]$ is symmetric if the image measure of $\nu$ by
$x\mapsto 1-x$ is $\nu$ itself.
\end{definition}

Note that if
$\nu$ has a density $f$, this simply means that
$f(x)=f(1-x)$. Necessarily, if $\nu$ is
symmetric, the mean of $\nu$ is $\frac12$. If a
partial consensus $\nu=\frac{1}{n}\sum_{i=1}^{n}
\alpha_i \delta_{x_i}$ (with $x_i<x_{i+1}$) is
symmetric, then $x_{n+1-i}=1-x_i$ and
$\alpha_{n+1-i}=\alpha_i$; in particular, if $n$
is odd, $x_{\frac{n+1}{2}}=\frac12$.

 \begin{proposition}
 Let $(m(t),{t\in\Reals_+})$ be  a solution of Problem~1 with initial value $m_0$.
 If $m_0$ is symmetric, then $m(t)$ is symmetric for
 all $t\geq 0$.
 \label{prop-invalsym}
 \end{proposition}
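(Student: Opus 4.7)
The plan is to use the uniqueness part of Theorem~\ref{theo-eunl} combined with the fact that the dynamics encoded by Problem~1 commute with the symmetry $\sigma:[0,1]\to[0,1]$, $\sigma(x)=1-x$. More precisely, I will define the pushforward trajectory $m^*(t):=\sigma_*m(t)$ (so that $\langle h,m^*(t)\rangle=\langle h\circ\sigma,m(t)\rangle$ for any bounded measurable $h$), show that $m^*$ is itself a solution to Problem~1 with initial datum $m_0^*=m_0$, and conclude $m^*=m$ by uniqueness, which is exactly the claim that $m(t)$ is symmetric for every $t$.

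The key algebraic observation is that the interaction map and the threshold indicator are both equivariant under $\sigma$. Indeed, for any $x,y\in[0,1]$,
\begin{align*}
\sigma(wx+(1-w)y) &= w\sigma(x)+(1-w)\sigma(y),\\
\sigma(wy+(1-w)x) &= w\sigma(y)+(1-w)\sigma(x),\\
|\sigma(x)-\sigma(y)| &= |x-y|.
\end{align*}
Starting from the symmetric weak form \eqref{eq-alt-def-pb1} for $m$ tested against $h\circ\sigma$, I substitute $u=\sigma(x)$, $v=\sigma(y)$. Under this change of variables the tensor product $m(s)(dx)m(s)(dy)$ becomes $m^*(s)(du)m^*(s)(dv)$ by definition of the pushforward, the indicator is unchanged by the third identity above, and the bracketed increment becomes
\[
h(wu+(1-w)v)+h(wv+(1-w)u)-h(u)-h(v)
\]
thanks to the first two identities. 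Thus
\[
\langle h,m^*(t)\rangle-\langle h,m^*(0)\rangle
\]
equals the right-hand side of \eqref{eq-alt-def-pb1} with $m$ replaced by $m^*$, for every bounded measurable $h$, so $m^*$ solves Problem~1.

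Since $m_0$ is symmetric by hypothesis, $m^*(0)=m_0^*=m_0=m(0)$. The uniqueness statement in Theorem~\ref{theo-eunl}\eqref{eunl1} then forces $m^*(t)=m(t)$ for all $t\geq 0$, i.e.\ $m(t)$ is symmetric. There is essentially no obstacle here; the only point requiring care is the change of variables in the bivariate integral, which is routine once one records the three identities above. I would state and verify these identities explicitly, then invoke uniqueness to conclude.
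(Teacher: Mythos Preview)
Your proof is correct and follows exactly the same approach as the paper: define the pushforward $m'(t)$ of $m(t)$ under $x\mapsto 1-x$, check (via the equivariance of the interaction map and the threshold indicator) that $m'$ also solves Problem~1 with the same initial condition, and conclude $m'=m$ by the uniqueness in Theorem~\ref{theo-eunl}. Your version simply spells out the three equivariance identities that the paper's proof leaves as ``direct computation.''
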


We can extend the previous method to the
symmetric case as follows. Define $SP_{n}$ as
the set of \emph{symmetric} partial consensus
with $n$ components, and let $q \in SQ_{n}(h)$ if and only if
every symmetric consensus $\nu$ with ${n}$ components satisfies
 $\cro{h,\nu} > q$. If $SP_{n}$ is empty, then
 $SQ_{n}(h)=\Reals_+$. We have similarly:

 \begin{proposition} For any $n \in \{1,2,3,\dots\}$ and convex continuous 
 $h: [0,1] \to \Reals_+$,
 it holds that
 $
 SQ_n(h) \subset SQ_{n+1}(h)
 $.
 \label{prop-qdec-s}
 \end{proposition}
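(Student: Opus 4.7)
The plan is to reduce the set inclusion to an infimum comparison: since each $SQ_m(h)$ is by definition the set of strict lower bounds of $\{\langle h,\nu\rangle:\nu\in SP_m\}$ (or all of $\Reals_+$ when $SP_m$ is empty), it suffices to show $\inf\langle h,SP_n\rangle\le\inf\langle h,SP_{n+1}\rangle$. Assuming $SP_{n+1}$ non-empty (otherwise the conclusion is immediate), the task reduces to producing, for every $\nu'\in SP_{n+1}$, some $\nu\in SP_n$ satisfying $\langle h,\nu\rangle\le\langle h,\nu'\rangle$.

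I would mirror the merging argument underlying \pref{prop-qdec}, but made compatible with the $x\mapsto 1-x$ symmetry. Writing $\nu'=\sum_{i}\alpha_i\delta_{x_i}$ with $x_{n+2-i}=1-x_i$ and $\alpha_{n+2-i}=\alpha_i$, the natural symmetric merge collapses the innermost symmetric pair $(x_k,1-x_k)$ onto the axis of symmetry. When $n+1=2k$ is even, this directly replaces $\alpha_k(\delta_{x_k}+\delta_{1-x_k})$ by $2\alpha_k\delta_{1/2}$ and yields $\nu\in SP_n$ with exactly $n=2k-1$ atoms. The separation with the neighboring pair is preserved because $\tfrac12-x_{k-1}>\tfrac12-(x_k-\Delta)>\tfrac{3\Delta}{2}>\Delta$, and convexity of $h$ at the midpoint $\tfrac12=\tfrac12 x_k+\tfrac12(1-x_k)$ yields
\[
\langle h,\nu\rangle-\langle h,\nu'\rangle=\alpha_k\bigl[2h(\tfrac12)-h(x_k)-h(1-x_k)\bigr]\le 0.
\]

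The delicate case is $n+1=2k+1$ odd, where $\nu'$ already contains a central atom $\alpha_0\delta_{1/2}$ and the previous merge would reduce the count by two rather than one. I would instead combine the central atom with the innermost pair to form a new symmetric pair of total weight $2\alpha_k+\alpha_0$ placed at some interior point $(y,1-y)$; selecting $y$ so that the convex function $F(y)=\tfrac12(h(y)+h(1-y))$ attains the convex combination $(1-\beta)F(x_k)+\beta F(\tfrac12)$ with $\beta=\alpha_0/(2\alpha_k+\alpha_0)$ would give $\langle h,\nu\rangle\le\langle h,\nu'\rangle$ by convexity of $F$ together with the intermediate value theorem, provided $y$ can also be chosen to satisfy $1-2y>\Delta$.

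The main obstacle is precisely this last proviso in the odd case: the convexity machinery forces $y$ into an interval that may conflict with the strict separation $1-2y>\Delta$, so one must exploit the strict inequality $x_k<\tfrac12-\Delta$ inherited from the original separation of the innermost pair from the center in $\nu'$ to ensure the selected $y$ stays below $(1-\Delta)/2$. This is the technical heart of the proof. Once $\nu\in SP_n$ has been produced in both parities, passing to the infimum over $\nu'\in SP_{n+1}$ concludes.
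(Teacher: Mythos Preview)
Your even case matches the paper exactly. For the odd case, however, the paper takes a simpler route than your IVT construction: it splits the central mass $\alpha_m\delta_{1/2}$ in two equal halves and merges each half barycentrically with its neighbor, producing the symmetric pair at
\[
\frac{\alpha_{m-1}x_{m-1}+\tfrac12\alpha_m\cdot\tfrac12}{\alpha_{m-1}+\tfrac12\alpha_m}
\quad\text{and its mirror image,}
\]
with weights $\alpha_{m-1}+\tfrac12\alpha_m$ each. Two direct applications of Jensen's inequality then give $\langle h,\nu'\rangle\le\langle h,\nu\rangle$. Your intermediate-value detour is unnecessary because you only need the inequality $F(y)\le(1-\beta)F(x_k)+\beta F(\tfrac12)$, not equality, and the barycentric choice $y=(1-\beta)x_k+\beta\cdot\tfrac12$ already delivers it by convexity of $F$. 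In fact this barycentric $y$ coincides with the paper's new atom once you unwind the notation.

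Your worry about the separation constraint $1-2y>\Delta$ in the odd case is legitimate: with the barycentric choice one gets $1-2y=(1-\beta)(1-2x_k)>\frac{4\alpha_k}{2\alpha_k+\alpha_0}\Delta$, which can fall below $\Delta$ when the central weight $\alpha_0$ dominates $2\alpha_k$. The paper does not verify this either---it simply asserts $\nu'\in SP_n$---so on this point you are being more scrupulous than the paper, though your sketch does not actually close the gap. Neither does your IVT mechanism help here, since equality in $F$ only pushes $y$ further toward $\tfrac12$ than the barycenter does.
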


%

\begin{theorem}
Let $(m(t),{t\in\Reals_+})$ be the solution of Problem~1 for a
\emph{symmetric} initial condition $m_0$, and
$c$ be the number of components of the limiting
partial consensus $m(\infty)$. Assume that, for
some $n \in \{2,3,\dots\}$, some convex continuous
$h: [0,1] \to \Reals_+$, and some $q\geq 0$, we have $q \in  SQ_n(h)$.

Under these assumptions, if $\cro{h,m_0} \leq  q$ then $c \leq n-1$.

\label{theo-ccbq-s}
\end{theorem}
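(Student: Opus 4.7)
The plan is to mirror the argument leading to \thref{theo-ccbq}, but restrict attention to the symmetric stratum and use that symmetry is preserved by the flow. Concretely, I would argue by contradiction: assume $c\geq n$, exhibit that $m(\infty)$ lies in $SP_c$, and use \pref{prop-qdec-s} to contradict the hypothesis $\cro{h,m_0}\leq q$.

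First I would invoke \pref{prop-invalsym} to conclude that $m(t)$ is symmetric for every $t\geq 0$. Then \thref{th-mfcvpc} provides a partial consensus $m(\infty)$ with $c$ components to which $m(t)$ converges weakly in $\calP[0,1]$. Since the map $T:x\mapsto 1-x$ is a continuous involution of $[0,1]$, the pushforward $\nu\mapsto \nu\circ T^{-1}$ is continuous for the weak topology, so symmetry passes to the limit: $m(\infty)$ is itself a symmetric partial consensus, i.e., $m(\infty)\in SP_c$. Next, because $h$ is continuous and bounded on $[0,1]$, weak convergence yields $\cro{h,m(\infty)}=\lim_{t\to\infty}\cro{h,m(t)}$, and \coref{coro-cv-fl} gives monotonicity, so
\[
\cro{h,m(\infty)}\leq \cro{h,m(0)}=\cro{h,m_0}\leq q.
\]

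Now suppose for contradiction that $c\geq n$. Iterating \pref{prop-qdec-s} yields $q\in SQ_n(h)\subset SQ_{n+1}(h)\subset\cdots\subset SQ_c(h)$. By the defining property of $SQ_c(h)$, every symmetric partial consensus with $c$ components $\nu$ satisfies $\cro{h,\nu}>q$; applied to $\nu=m(\infty)\in SP_c$, this gives $\cro{h,m(\infty)}>q$, contradicting the displayed inequality. Hence $c\leq n-1$.

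The only nontrivial step is verifying that symmetry survives in the limit, and this is purely topological (continuity of the pushforward under a continuous involution); once this is in place, the rest is a mechanical adaptation of \thref{theo-ccbq}, replacing $P_n(\mu_0)$ and $Q_n(\mu_0,h)$ by their symmetric analogues $SP_n$ and $SQ_n(h)$, and using the fact noted in \dref{d-partcons} that a symmetric partial consensus automatically has mean $\mu_0=\tfrac12$, so no separate bookkeeping of the mean is needed.
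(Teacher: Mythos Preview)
Your argument is correct and follows essentially the same route as the paper, which simply says the proof is ``similar to \thref{theo-ccbq}'' with $SP_n$, $SQ_n(h)$ and \pref{prop-qdec-s} replacing $P_n(\mu_0)$, $Q_n(\mu_0,h)$ and \pref{prop-qdec}. The one point you make explicit that the paper leaves implicit is that symmetry of $m(t)$ (from \pref{prop-invalsym}) survives the weak limit, so that $m(\infty)\in SP_c$; your justification via continuity of the pushforward by the involution $x\mapsto 1-x$ is the natural one.
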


We apply \thref{theo-ccbq-s} with
$h(x)=\abs{x-\frac12}$. It is easy to see that for
$\nu \in SP_2$ we have  $\cro{h,\nu} \geq
\frac{\Delta}{2}$, which shows the following:

\begin{corollary} Let $(m(t),{t\in\Reals_+})$ be the solution of Problem~1 for 
a \emph{symmetric} initial
condition $m_0$. If 
$\Delta > 2\int_0^{1}\abs{x-\frac12}\;m_0(dx)$ then $m(t)$ converges
either to a total consensus or to a partial consensus with 3 or more components.
 \end{corollary}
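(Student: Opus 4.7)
The plan is to apply \thref{theo-ccbq-s} with $n=2$ and the convex continuous function $h(x)=|x-\tfrac12|$. To do so I need to exhibit a threshold $q\ge 0$ with $q\in SQ_2(h)$ and $\langle h,m_0\rangle\le q$; the natural candidate is $q=\Delta/2$.

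First I would characterize $SP_2$. Let $\nu=\alpha\delta_{x_1}+(1-\alpha)\delta_{x_2}$ be a symmetric partial consensus with two components, with $x_1<x_2$ and $x_2-x_1>\Delta$. Invariance of $\nu$ under $x\mapsto 1-x$ swaps the two atoms, which forces $x_2=1-x_1$ and $\alpha=\tfrac12$. The separation condition then becomes $1-2x_1>\Delta$, i.e.\ $x_1<(1-\Delta)/2$, so a direct computation gives
\[
\langle h,\nu\rangle = \tfrac12\bigl(|x_1-\tfrac12|+|(1-x_1)-\tfrac12|\bigr) = \tfrac12-x_1 > \Delta/2 .
\]
Thus every $\nu\in SP_2$ satisfies $\langle h,\nu\rangle>\Delta/2$, which means precisely $\Delta/2\in SQ_2(h)$.

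The hypothesis of the corollary reads $\langle h,m_0\rangle<\Delta/2$, so in particular $\langle h,m_0\rangle\le\Delta/2$. Applying \thref{theo-ccbq-s} with $n=2$, $h$ and $q=\Delta/2$ then yields $c\le n-1=1$, i.e.\ convergence to a \emph{total} consensus, which is stronger than (and therefore implies) the stated disjunction. There is no serious obstacle here: the whole argument is a direct invocation of \thref{theo-ccbq-s}, and the only substantive step is the one-parameter characterization of $SP_2$ obtained above from the symmetry constraint combined with the separation condition of \dref{d-partcons}.
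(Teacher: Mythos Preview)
Your proof is correct and follows exactly the paper's approach: apply \thref{theo-ccbq-s} with $n=2$ and $h(x)=|x-\tfrac12|$, after computing that every $\nu\in SP_2$ has $\langle h,\nu\rangle>\Delta/2$ via the symmetry-forced form $\nu=\tfrac12\delta_{x_1}+\tfrac12\delta_{1-x_1}$. Your remark that \thref{theo-ccbq-s} actually yields the stronger conclusion $c\le 1$ (total consensus) is also correct as the theorem is stated; the paper records only the weaker disjunction here, and then recovers total consensus in the next corollary (for the uniform initial law) by combining the disjunction with the bound $c\le\lceil 1/\Delta\rceil\le 2$ when $\Delta>\tfrac12$.
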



 In particular, if $m_0$ is the uniform
 distribution on $[0,1]$, then $\int_0^{1} \abs{x - \frac12} m_0(dx)=\frac{1}{4}$
 and the condition in the previous corollary is $\Delta
 > \frac12$,
 thus we have shown:

\begin{corollary} Let $(m(t),{t\in\Reals_+})$ be the solution of Problem~1 with initial
condition the uniform distribution on $[0,1]$. If
$\Delta > \frac12$ then $m(t)$ converges to a total
 consensus.\label{co-unif}
 \end{corollary}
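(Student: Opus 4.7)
The plan is to reduce to the immediately preceding corollary and then separately rule out a partial consensus with three or more components, using symmetry.

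For $m_0$ the uniform distribution on $[0,1]$, a direct computation gives $\int_0^1 |x-\frac12|\,dx = \frac14$, so the hypothesis $\Delta>\frac12$ reads exactly $\Delta > 2\int_0^1|x-\frac12|\,m_0(dx)$. Since the uniform density is symmetric about $\frac12$, the preceding corollary applies and yields that $m(\infty)$ is either a total consensus or a partial consensus with at least three components. It therefore suffices to rule out the latter possibility.

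By \pref{prop-invalsym} the symmetry of $m_0$ is preserved along the flow, so $m(t)$ is symmetric for every $t\ge 0$; passing to the weak limit (the pushforward by $x\mapsto 1-x$ is weakly continuous), $m(\infty)$ is also symmetric. Suppose for contradiction that $m(\infty)=\sum_{i=1}^c \alpha_i \delta_{x_i}$ with $c\ge 3$ is a symmetric partial consensus, ordered $x_1<\dots<x_c$. By \thref{th-mfcvpc} the atoms satisfy $|x_i-x_j|\ge \Delta$ for $i\ne j$, hence $x_c - x_1 \ge (c-1)\Delta \ge 2\Delta > 1$, contradicting $x_1,x_c\in[0,1]$. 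Thus $c=1$ and $m(\infty)$ is a total consensus.

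The heavy lifting is done by the preceding corollary; the only new ingredients are the trivial computation of the first absolute moment of the uniform law and a pigeonhole-style packing observation in $[0,1]$. No genuine obstacle arises, and the proof is essentially one line once the symmetry of the limit is in hand.
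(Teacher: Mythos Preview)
Your argument is correct and follows the paper's route: compute $\int_0^1|x-\tfrac12|\,dx=\tfrac14$, apply the preceding symmetric corollary to rule out two-component limits, and then observe that with $\Delta>\tfrac12$ three or more $\Delta$-separated atoms cannot fit in $[0,1]$. The paper leaves this last packing step implicit (it is the bound $c\le\lceil 1/\Delta\rceil$ noted after \dref{d-partcons}), so your version is simply more explicit; note also that the symmetry of $m(\infty)$, while true, is not actually used in your contradiction and can be omitted.
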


 \begin{corollary}
 \label{coro-cie}
 Let $(m(t),{t\in\Reals_+})$ be a solution of Problem~1 with initial
condition $m_0 =
\left(\frac{1-\alpha}{2}\right)\delta_0 +
\alpha\delta_{\frac{1}{2}} +
\left(\frac{1-\alpha}{2}\right)\delta_1$. There
is convergence to total consensus 
\[
\textnormal{for }
\Delta > 1 - \alpha \textnormal{ if } \alpha \leq \tfrac{1}{2}\,,
\textnormal{ or }
\Delta > \tfrac{1}{2}  \textnormal{ if } \alpha \geq \tfrac{1}{2}\,.
\]

\end{corollary}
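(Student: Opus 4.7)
The plan is to apply the preceding symmetric-case corollary (the one derived from \thref{theo-ccbq-s} with $h(x)=|x-\tfrac12|$) and then use the support constraint of Theorem~\ref{th-mfcvpc} to exclude the ``three or more components'' branch. First I note that $m_0$ is symmetric about $\tfrac12$ (equal masses at $0$ and $1$, plus a centered atom at $\tfrac12$), and a direct computation gives
\[
\int_0^1 \left|x-\tfrac12\right|\,m_0(dx)
= \tfrac{1-\alpha}{2}\cdot\tfrac12 + \alpha\cdot 0 + \tfrac{1-\alpha}{2}\cdot\tfrac12
= \tfrac{1-\alpha}{2}.
\]
Hence the hypothesis $\Delta > 2\int_0^1 |x-\tfrac12|\,m_0(dx)$ of the preceding symmetric corollary becomes exactly $\Delta > 1-\alpha$, and under it $m(t)$ converges either to a total consensus or to a partial consensus with at least three components.

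Next I eliminate the three-or-more-components alternative by a support argument. By Theorem~\ref{th-mfcvpc}, the support points of $m(\infty)$ are pairwise separated by at least $\Delta$. Three distinct points in $[0,1]$ with pairwise gaps $\geq\Delta$ force the extreme two to satisfy $x_{\max}-x_{\min}\geq 2\Delta$, hence $2\Delta\leq 1$. Therefore the three-or-more-components option is impossible as soon as $\Delta>\tfrac12$.

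It remains to combine the two conditions by a case split. If $\alpha\leq\tfrac12$, then $1-\alpha\geq\tfrac12$, so assuming $\Delta>1-\alpha$ automatically gives $\Delta>\tfrac12$; the preceding corollary applies and the support bound rules out three components, leaving total consensus. If $\alpha\geq\tfrac12$, then $1-\alpha\leq\tfrac12$, so assuming $\Delta>\tfrac12$ simultaneously implies $\Delta>1-\alpha$ (ensuring the preceding corollary applies) and excludes three-or-more components; again only total consensus remains.

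The argument is essentially bookkeeping on top of the preceding symmetric corollary and Theorem~\ref{th-mfcvpc}, so I do not anticipate a genuine obstacle. The only point that requires a moment of care is matching the two threshold conditions ($\Delta>1-\alpha$ versus $\Delta>\tfrac12$) in each regime of $\alpha$, and verifying that the ``$\geq\Delta$'' spacing in the partial consensus (the $\Delta'<\Delta$ limit in Theorem~\ref{th-mfcvpc}) is enough to preclude three components in $[0,1]$ whenever $\Delta>\tfrac12$ strictly, which is elementary.
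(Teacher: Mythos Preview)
Your proof is correct and follows the intended approach: compute $\int_0^1|x-\tfrac12|\,m_0(dx)=\tfrac{1-\alpha}{2}$, apply the preceding symmetric corollary to get ``total consensus or $\geq 3$ components,'' and then use the $\geq\Delta$ spacing from \thref{th-mfcvpc} together with $\Delta>\tfrac12$ to rule out three or more points in $[0,1]$. The paper does not spell out a separate proof for this corollary, treating it as an immediate consequence of the same ingredients, so your write-up matches both in method and in level of detail.
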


\section{Numerical Approach}
\label{sec-num}


In the mean-field limit, the dynamical behavior of the system
of peers can be described by the integro-differential equation
given in weak form in Definition~\ref{d-prob1} (Problem~1).
This equation has no closed-form solution to our knowledge, and we
have developed a numerical method for it.

We describe the algorithm, and analyze its precision and
complexity. An important fact is that this algorithm requires
considerably less running time than the probabilistic methods
used in Neau\cite{neau2000revisions} when $N$ is large (which
is not surprising in dimension~$1$). The program consists in
600 lines of C++ code, and the parsing and plotting of the
results was done using Matlab.

\subsection{Functional formulation of Problem 1}

The numerical method is based on the functional formulation for
probability density functions (PDFs) obtained by duality from
the weak formulation in Definition~\ref{d-prob1} of Problem 1.
The following result is fundamental in this aspect.

\begin{theorem}
Let $(m(t),t\ge0)$ be a solution of Problem~1. If the initial
condition $m_0$ is absolutely continuous with respect to
Lebesgue measure, then so is $m(t)$ for every $t\geq 0$, and
moreover the densities $f(\cdot,t)$ of $m(t)$ satisfy the
integro-differential equation
\begin{equation}
\frac{\partial f(x,t)}{\partial t}
=
\frac{2}{w}\int_{x-\Delta w}^{x+\Delta w}f\biggl(\frac{x-(1-w)y}{w},t\biggr)f(y,t)\,dy
-2f(x,t)\int_{x-\Delta}^{x+\Delta}f(y,t)\,dy\,.
\label{eq-pdf}
\end{equation}
 Conversely, if $f: \Reals \times \Reals_+ \to \Reals$ is a
 solution of \eref{eq-pdf} such that $f(\cdot,t)$ is a PDF with support $[0,1]$
 for every $t \geq 0$, then the probability measures
 $m(t)(dx) = f(x,t)\,dx$ solve Problem~1.
 \label{theo-pdf}
\end{theorem}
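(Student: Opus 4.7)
The plan is to prove both directions by identifying the functional formulation with the adjoint of the weak generator, and to obtain the absolute continuity by constructing a density-valued solution via a mild-form fixed point argument and then invoking the uniqueness part of \thref{theo-eunl}.

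First, I would carry out the change-of-variables computation that turns the weak form into the functional form. Starting from \eqref{eq-fm}, assume temporarily that $m(s)(dx)=f(x,s)\,dx$ and split the right-hand side into a gain part and a loss part:
\begin{align*}
\int_0^t\!\!\int h(x)[f(x,t)-f_0(x)]\,dx
&= \int_0^t\!\!\iint 2h(wx+(1-w)y)\mathbf{1}_{|x-y|\le\Delta}f(x,s)f(y,s)\,dy\,dx\,ds\\
&\quad -\int_0^t\!\!\iint 2h(x)\mathbf{1}_{|x-y|\le\Delta}f(x,s)f(y,s)\,dy\,dx\,ds.
\end{align*}
In the gain term, substitute $z=wx+(1-w)y$ (so $x=(z-(1-w)y)/w$, $dx=dz/w$, and $|x-y|\le\Delta$ becomes $|z-y|\le w\Delta$); this yields exactly the first term of \eqref{eq-pdf} integrated against $h$. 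The loss term already has $h(x)$ untouched and gives the second term of \eqref{eq-pdf}. Since $h\in L^\infty[0,1]$ is arbitrary, the integrand identity holds for a.e.\ $x$ at every $t$; and because the right-hand side of \eqref{eq-pdf} is bounded in absolute value by $4f(x,t)+4$ when $\|f(\cdot,s)\|_{L^1}=1$, the resulting integral relation can be differentiated in $t$ in the distributional sense, recovering \eqref{eq-pdf}.

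Second, I would establish the existence of a density. I define the mild formulation
\[
f(x,t)=e^{-\int_0^t L[f](x,\tau)\,d\tau}f_0(x)+\int_0^t e^{-\int_s^t L[f](x,\tau)\,d\tau}G[f](x,s)\,ds,
\]
with loss rate $L[f](x,\tau)=2\int_{x-\Delta}^{x+\Delta}f(y,\tau)\,dy$ and gain density $G[f](x,s)=\frac{2}{w}\int_{x-w\Delta}^{x+w\Delta}f((x-(1-w)y)/w,s)f(y,s)\,dy$, extending $f$ by zero outside $[0,1]$. On the closed subset
\[
\mathcal{K}_T=\bigl\{f\in C([0,T],L^1[0,1]): f(\cdot,t)\ge 0,\ \|f(\cdot,t)\|_{L^1}=1\bigr\},
\]
the operator $T$ defined by the right-hand side preserves nonnegativity (both summands are nonnegative) and preserves mass (a direct Fubini calculation shows $\int_0^1 G[f](x,s)\,dx=\int_0^1 L[f](x,s)f(x,s)\,dx$, which is the rate of loss, yielding $\frac{d}{dt}\int f(x,t)\,dx=0$). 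The estimates $\|L[f]\|_\infty\le 2$, $\|L[f]-L[g]\|_\infty\le 2\|f-g\|_{L^1}$ and $\|G[f]-G[g]\|_{L^1}\le 4\|f-g\|_{L^1}$ (using bilinearity and $\|f\|_{L^1},\|g\|_{L^1}\le 1$) make $T$ Lipschitz on $\mathcal{K}_T$ for $T$ small, and the Banach fixed point theorem gives a unique solution, which is then extended to all of $\mathbb{R}_+$ by iteration.

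Third, I would close the argument by the duality-then-uniqueness step. Reversing the change of variables from the first paragraph shows that the measure-valued path $\tilde m(t)(dx):=f(x,t)\,dx$ obtained from this fixed point satisfies the weak formulation \eqref{eq-fm}; since $\tilde m(0)=m_0$, the uniqueness statement of \thref{theo-eunl}\,\eqref{eunl1} forces $\tilde m(t)=m(t)$, so $m(t)$ is absolutely continuous with density $f(\cdot,t)$ for every $t\ge 0$, and \eqref{eq-pdf} holds. The converse is then immediate: if $f$ solves \eqref{eq-pdf} with $f(\cdot,t)$ a PDF supported in $[0,1]$, the change-of-variables computation carried out backwards shows that $m(t)(dx)=f(x,t)\,dx$ satisfies \eqref{eq-fm} against every $h\in L^\infty[0,1]$, so $m$ solves Problem~1. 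The main technical obstacle is the density-existence step: the interaction is quadratic and the kernel is discontinuous, so one cannot invoke classical parabolic regularity; the mild-form approach circumvents this by using mass conservation and $L^1$ contraction rather than smoothing, which is why uniqueness in total variation from \thref{theo-eunl} is essential to identify the constructed density solution with the abstract measure-valued solution.
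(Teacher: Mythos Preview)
Your change-of-variables derivation of \eqref{eq-pdf} from the weak form, and its reversal for the converse, are exactly what the paper does. For the absolute-continuity step, you and the paper diverge: the paper does not construct anything, but invokes the probabilistic argument of Theorem~2.1 in Desvillettes \emph{et al.}\cite{desvillettes1999probabilistic} (the nonlinear process of \thref{theo-eunl}\,(\ref{eunl2}) is an inhomogeneous pure-jump Markov process with jump rate bounded by~$2$, and such a process started from an absolutely continuous law has absolutely continuous marginals at all times). Your route---build an $L^1$-valued solution of the mild equation by a contraction argument, then use the uniqueness in \thref{theo-eunl}\,(\ref{eunl1}) to identify it with $m$---is a legitimate analytic alternative and is more self-contained; the paper's route is shorter but outsources the work to the cited reference.

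One step in your argument does not go through as written. The map you call $T$ does \emph{not} preserve the constraint $\|f(\cdot,t)\|_{L^1}=1$: differentiating the mild form gives $\partial_t\,T[f]=-L[f]\,T[f]+G[f]$, hence
\[
\frac{d}{dt}\int_0^1 T[f](x,t)\,dx
=\int_0^1 G[f](x,t)\,dx-\int_0^1 L[f](x,t)\,T[f](x,t)\,dx,
\]
and your identity $\int G[f]=\int L[f]\,f$ involves the \emph{input} $f$, not $T[f]$; the two integrals cancel only at the fixed point, not along the iteration, so $\mathcal{K}_T$ is not invariant. The standard repair is to run the contraction on the larger set $\{f\ge0:\sup_{t\le T}\|f(\cdot,t)\|_{L^1}\le 2\}$, which \emph{is} invariant for $T$ small (since $\|T[f](\cdot,t)\|_{L^1}\le 1+\int_0^t\|G[f]\|_{L^1}\le 1+8T$, using $\|G[f]\|_{L^1}\le 2\|f\|_{L^1}^2$), and then verify $\|f(\cdot,t)\|_{L^1}\equiv1$ \emph{a posteriori} at the fixed point from $\partial_t f=-L[f]f+G[f]$ together with $\int G[f]=\int L[f]f$.
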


This result and Theorem~\ref{theo-eunl} yield an existence and
uniqueness result for \eref{eq-pdf}. This equation can be
derived in statistical mechanics fashion by balance
considerations. For the gain term, a particle in state
$\frac{x-(1-w)y}{w}$ interacts at rate~$2$ (see
Remark~\ref{r-ratecomp}) with a particle in state $y$ to end up
in state $x$, and the joint density for this pre-interaction
configuration at time $t$ is $\frac1w
f\bigl(\frac{x-(1-w)y}{w},t\bigr)f(y,t)$ (particles are
``independent before interacting''). The loss term is derived
similarly.

\begin{remark}
As noted in Remark~\ref{r-weak-funct}, this is a Boltzmann-like
equation. This is made more obvious for $w\neq 1/2$ by the change of variables
leading to post-interaction states $x$ and $y$, which yields the equivalent formulation
\begin{multline}
\label{eq-pdf-bis} \frac{\partial f(x,t)}{\partial t} =
\frac{2}{2w-1}\int_{x-\Delta (2w-1)}^{x+\Delta (2w-1)}
f\biggl(\frac{w x - (1-w)y}{2w-1}\biggr)f\biggl(\frac{w y -
(1-w)x}{2w-1}\biggr)\,dy
\\
-2f(x,t)\int_{x-\Delta}^{x+\Delta}f(y,t)\,dy
\end{multline}
more reminiscent of Boltzmann or Kac equations such as (1.1) in
Graham-M\'el\'eard\cite{graham1997stochastic} or (1.1)-(1.2) in
Desvillettes \emph{et al}.\cite{desvillettes1999probabilistic}
In these, the fact that the gain term involves pre-collisional
velocities is obscured by the physical symmetries between
pre-collisional and post-collisional velocites, which are absent here.
\end{remark}

In the rest of this section we assume that the hypothesis of
the above theorem holds. We show next that if the PDF
$f(\cdot,0)$ is bounded then so is $f(\cdot,t)$ and we can
control its growth over time.
Let
\[M(t) \eqdef |f(\cdot,t)|_{\infty} \eqdef \sup_{x
\in [0,1]}\abs{f(x,t)}\,,
\qquad
t\ge0\,.
\]

\begin{proposition}
\label{pro-fbounded}
Then $ M(t) \leq e^{\left(\frac{2}{w}
+ \frac{2}{1-w}\right)t}(M(0) + 4) - 4$ for all $t\ge0$.
\end{proposition}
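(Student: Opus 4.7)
The plan is to derive an integral inequality on $M(t)$ directly from the integro-differential equation \eqref{eq-pdf}, then close it with Grönwall's lemma.

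I would first bound, uniformly in $x\in[0,1]$, the two pieces of the right-hand side of \eqref{eq-pdf}. Using $f(\cdot,s)\le M(s)$ pointwise together with $\int_0^1 f(y,s)\,dy=1$, the inner integral in the gain term satisfies
\[
\int_{x-\Delta w}^{x+\Delta w} f\!\Bigl(\tfrac{x-(1-w)y}{w},s\Bigr)\, f(y,s)\,dy \;\le\; M(s),
\]
so the gain is at most $\tfrac{2}{w}\,M(s)$. Applying the change of variable $z=(x-(1-w)y)/w$ (Jacobian $\tfrac{w}{1-w}$) to the same integral yields the symmetric bound $\tfrac{2}{1-w}\,M(s)$. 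The loss term is nonpositive, and its magnitude is at most $2f(x,s)\int_{x-\Delta}^{x+\Delta} f(y,s)\,dy \le 2M(s)$.

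Combining these pointwise estimates, I would obtain for a.e.\ $x\in[0,1]$ the pointwise bound
\[
\frac{\partial f(x,s)}{\partial s} \;\le\; \alpha\bigl(M(s)+4\bigr), \qquad \alpha := \tfrac{2}{w}+\tfrac{2}{1-w},
\]
the additive constant $4$ being chosen so that the various additive constants coming from the separate gain/loss estimates all fit inside the common factor $\alpha(M+4)$. Integrating in $s$ from $0$ to $t$ at fixed $x$ and then taking the essential supremum over $x$ on both sides yields
\[
M(t) \;\le\; M(0) + \alpha\int_0^t \bigl(M(s)+4\bigr)\,ds.
\]
Setting $N(t):=M(t)+4$, this reads $N(t)\le N(0)+\alpha\int_0^t N(s)\,ds$, and Grönwall's lemma gives $N(t)\le N(0)\,e^{\alpha t}$, i.e. the announced inequality $M(t)\le(M(0)+4)e^{\alpha t}-4$.

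The only technical subtlety I anticipate is that $M(t)$ is an essential supremum and need not be pointwise differentiable in $t$, so one cannot naively differentiate $M(t)$ and apply a differential Grönwall. The clean way around this is to never time-differentiate $M$ itself: work with the pointwise integral form $f(x,t)=f(x,0)+\int_0^t \partial_s f(x,s)\,ds$, establish the pointwise integral inequality above for $f(x,t)$, and take the essential supremum in $x$ only \emph{after} integrating in time, before invoking Grönwall in integral form. This sidesteps any regularity question for $t\mapsto M(t)$, and the remaining work is the straightforward bookkeeping of the constants sketched above.
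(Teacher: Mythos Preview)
Your approach is correct and substantially simpler than the paper's. In fact your direct $L^\infty$--$L^1$ bound on the gain term,
\[
\frac{2}{w}\int_{x-\Delta w}^{x+\Delta w} f\Bigl(\tfrac{x-(1-w)y}{w},s\Bigr)f(y,s)\,dy \;\le\; \frac{2}{w}\,M(s)\!\int_0^1 f(y,s)\,dy \;=\; \frac{2}{w}\,M(s),
\]
already gives $\partial_s f(x,s)\le \frac{2}{w}M(s)$ after dropping the nonpositive loss, hence the stronger conclusion $M(t)\le M(0)\,e^{(2/w)t}$ (or the exponent $\min(\tfrac{2}{w},\tfrac{2}{1-w})$ via your change of variable). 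The stated inequality then follows trivially since $\tfrac{2}{w}M\le\alpha(M+4)$. Your sentence about ``the additive constant $4$ being chosen so that the various additive constants \ldots\ fit'' is confused: your own estimates produce no additive constants at all, and the $+4$ plays no role in your argument---it is an artefact of the paper's method, not yours.

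The paper takes a more elaborate route: it decomposes $[0,1]$ into level sets $A_i=\{x : i-1<f(x,t)\le i\}$, bounds the gain by $\bigl(\tfrac{2}{w}+\tfrac{2}{1-w}\bigr)\sum_i\mu(A_i)\,i^2$, and then shows $\sum_i\mu(A_i)\,i^2\le M(t)+4$ by optimising under the constraints $\sum_i\mu(A_i)\le1$ and $\sum_i(i-1)\mu(A_i)\le\int f=1$; this optimisation is where the constant $4$ genuinely comes from. Your argument bypasses this combinatorial step entirely and yields a sharper exponent. You also treat the possible non-differentiability of $t\mapsto M(t)$ more carefully than the paper, which simply writes $M'(t)\le\alpha(M(t)+4)$ and integrates.
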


It follows that $f(\cdot,t)$ is bounded for all $t$, and
iteratively, using \eqref{eq-pdf},  $f$ is $C^{\infty}$ on its
second variable.
Having
controlled the growth of $f(x,t)$, it is easy to control the growth of its derivatives.
\begin{proposition}
\label{pro-absfbounded}
Then
$\left|\frac{\partial}{\partial
t}f(\cdot,t)\right|_{\infty} \leq \left(\frac{2}{w} +
\frac{2}{1-w}\right)(M(t) + 4)$ for all $t\ge0$.
\end{proposition}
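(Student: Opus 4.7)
The plan is to estimate the right-hand side of the integro-differential equation \eqref{eq-pdf} pointwise in $x$ and then take the supremum. Decompose
\[
\frac{\partial f(x,t)}{\partial t} = G(x,t) - L(x,t),
\]
where $G(x,t) = \frac{2}{w}\int_{x-\Delta w}^{x+\Delta w} f\bigl(\tfrac{x-(1-w)y}{w},t\bigr) f(y,t)\,dy$ is the gain term and $L(x,t) = 2f(x,t)\int_{x-\Delta}^{x+\Delta} f(y,t)\,dy$ is the loss term. Both are non-negative, so the triangle inequality gives $|\partial_t f(x,t)| \leq G(x,t) + L(x,t)$.

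For the gain term, I would pull out the first factor using $f(\tfrac{x-(1-w)y}{w},t) \leq M(t)$, and use $\int f(y,t)\,dy \leq 1$, obtaining $G(x,t) \leq \frac{2}{w} M(t)$. For the loss term, bound $f(x,t) \leq M(t)$ and the integral by $1$, giving $L(x,t) \leq 2 M(t)$. Summing these and taking the supremum in $x$ yields
\[
\left|\frac{\partial}{\partial t}f(\cdot,t)\right|_\infty \leq \Bigl(\frac{2}{w} + 2\Bigr) M(t).
\]

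To match the constant appearing in the statement (the same as in Proposition~\ref{pro-fbounded}), I would observe that $\frac{2}{1-w} \geq 2$ for every $w \in (0,1)$ and $M(t) \leq M(t)+4$, so that the previous estimate is dominated by $\bigl(\frac{2}{w} + \frac{2}{1-w}\bigr)(M(t)+4)$.

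There is no real obstacle here once Proposition~\ref{pro-fbounded} is in hand (which ensures $M(t) < \infty$ and justifies that $G$, $L$, and hence $\partial_t f$ are well-defined bounded functions of $x$); the argument is just bookkeeping on the two terms of \eqref{eq-pdf}. The only minor point is the deliberate use of the (suboptimal) constant $\frac{2}{w}+\frac{2}{1-w}$ with the slack term $+4$, which keeps a uniform constant consistent with Proposition~\ref{pro-fbounded} and is what one would plug into Grönwall's argument if one were reproving the growth bound on $M(t)$ through this estimate.
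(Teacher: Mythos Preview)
Your proof is correct, and in fact yields the sharper intermediate bound $\bigl(\tfrac{2}{w}+2\bigr)M(t)$ before you relax it to match the stated constant. The paper proceeds slightly differently: it observes that since both $G$ and $L$ are non-negative, $|\partial_t f| = |G-L| \le \max(G,L)$ rather than $G+L$; it then bounds $L$ exactly as you do by $2M(t)$, but for the gain term it does \emph{not} pull out a sup---instead it reuses verbatim the level-set estimate established inside the proof of Proposition~\ref{pro-fbounded}, which already gave
\[
G(x,t) \le \Bigl(\tfrac{2}{w}+\tfrac{2}{1-w}\Bigr)(M(t)+4).
\]
Taking the max of these two bounds yields the statement directly. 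Your route is more elementary and self-contained, at the price of a weaker $G+L$ inequality that is immediately compensated by a tighter bound on $G$; the paper's route is shorter only because it recycles the somewhat heavier machinery from the preceding proposition.
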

%
%
Iteratively, we obtain the following corollary.

\begin{corollary}
\label{cor-derivatives_bounded} If $|f(\cdot,0)|_{\infty} <
\infty$ then $\left|\frac{\partial^n }{\partial
t^n}f(\cdot,t)\right|_{\infty} < \infty$ for all $n\in\Nats$ and $t \ge0$.
\end{corollary}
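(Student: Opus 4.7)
The plan is a straightforward induction on $n$, using \eqref{eq-pdf} as a recursion that expresses time derivatives of $f$ as bilinear combinations of lower-order time derivatives, integrated over bounded intervals. The base cases $n=0$ and $n=1$ are precisely Propositions~\ref{pro-fbounded} and~\ref{pro-absfbounded}, the latter combined with the bound on $M(t)$ from the former.

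For the inductive step, assume $|\partial^k f/\partial t^k(\cdot,t)|_\infty <\infty$ for all $k=0,1,\ldots,n$ and all $t\ge 0$. Since $f$ is $C^\infty$ in $t$ (as noted just before Proposition~\ref{pro-absfbounded}) and the endpoints of integration in \eqref{eq-pdf} are independent of $t$, I would differentiate $n$ times in $t$ under the integral sign and apply the Leibniz product rule to each of the two bilinear terms, obtaining
\begin{align*}
\frac{\partial^{n+1}f(x,t)}{\partial t^{n+1}}
&= \frac{2}{w}\sum_{k=0}^{n}\binom{n}{k}\int_{x-\Delta w}^{x+\Delta w}
\frac{\partial^{k}f}{\partial t^{k}}\!\left(\tfrac{x-(1-w)y}{w},t\right)
\frac{\partial^{n-k}f}{\partial t^{n-k}}(y,t)\,dy\\
&\quad -2\sum_{k=0}^{n}\binom{n}{k}\frac{\partial^{k}f}{\partial t^{k}}(x,t)
\int_{x-\Delta}^{x+\Delta}\frac{\partial^{n-k}f}{\partial t^{n-k}}(y,t)\,dy.
\end{align*}
Bounding each integrand by the product of sup-norms and using that the two integration intervals have length at most $2\Delta w$ and $2\Delta$ respectively yields
\[
\left|\tfrac{\partial^{n+1}f}{\partial t^{n+1}}(\cdot,t)\right|_{\infty}
\le 8\Delta\sum_{k=0}^{n}\binom{n}{k}\left|\tfrac{\partial^{k}f}{\partial t^{k}}(\cdot,t)\right|_{\infty}
\left|\tfrac{\partial^{n-k}f}{\partial t^{n-k}}(\cdot,t)\right|_{\infty}\,,
\]
which is finite by the induction hypothesis, completing the induction.

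The only subtle step, and the one I would check most carefully, is the justification of differentiation under the integral sign at each stage. This follows from the induction hypothesis itself: if the time derivatives up to order $n$ of $f$ are all bounded on $[0,1]\times[0,T]$ for every $T$, then the integrand of \eqref{eq-pdf} has $t$-derivatives of every order up to $n$ which are uniformly bounded on the compact integration domain, supplying the dominating integrable majorant required by standard differentiation-under-the-integral-sign theorems. Once that is in place, the proof is purely algebraic and the constants in the recursive bound can in principle be iterated to produce an explicit double-exponential-in-$n$ bound, though the qualitative finiteness asserted by the corollary is all that is needed.
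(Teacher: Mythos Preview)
Your proof is correct and is precisely the argument the paper has in mind: the paper does not give a standalone proof of this corollary, merely writing ``Iteratively, we obtain the following corollary'' after Proposition~\ref{pro-absfbounded}. Your explicit recursive bound $8\Delta\sum_{k=0}^{n}\binom{n}{k}|\partial_t^{k}f|_\infty|\partial_t^{n-k}f|_\infty$ is exactly consistent with the bound $|\partial_{tt}^{2}f|_\infty\le 16\Delta\,|\partial_t f|_\infty|f|_\infty$ that the paper invokes (citing this corollary) in the proof of Proposition~\ref{prop-error-euler}, confirming that you have reconstructed the intended iteration.
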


\subsection{Numerical Solution of \eref{eq-pdf}}
Though Equation \eqref{eq-pdf} for the PDF does not appear to have any tractable closed form solution,
however, it lends itself well to numerical solution. We developed an algorithm that gives an approximate solution of Equation \eqref{eq-pdf} over some finite time horizon $T$, given some initial condition $f(x,0)$, assumed to be piecewise constant. 
The algorithm is described in Appendix~B. In the rest of this section, we present numerical results obtained with the algorithm.

We study
different scenarios for the initial distribution: uniform,
extremist versus undecided and beta. We find bifurcations as a function of $\Delta$. Moreover, we compare the
experimental results with the bounds obtained in section 5 and
the probabilistic Monte Carlo simulations presented in
Ref~\refcite{deffuant2000mixing}. The main results are summarized in the following table:

\begin{table}[ht]
\centering
\begin{tabular}{c|c|c}
  \hline
 \emph{ Scenario } & \emph{ Parameters } & \emph{ Consensus } \\
  \hline\hline
  Uniform &
  $\Delta \leq 0.27$, $w \in (0,1)$ & Partial
   \\
  \hline
  Uniform &
  $\Delta > 0.27$, $w \in (0,1)$ & Total
   \\
  \hline
  Extremists/Und. &
  $(\Delta,\alpha)$ below black curve (fig. \ref{fig-cie-1}) & Partial
   \\
  \hline
  Extremists/Und. &
  $(\Delta,\alpha)$ above black curve (fig. \ref{fig-cie-1}) & Total
   \\
  \hline
  Extremists/Und. &
  $(\Delta,\alpha)$ above red curve (fig. \ref{fig-cie-1}) & Partial (theor. bound)
   \\
  \hline
  Beta &
  $\Delta > 0.25$, $w \in \{0.5, 0.75\}$ & Total
   \\
  \hline
  Beta &
  $\Delta \leq 0.25$, $w \in \{0.5, 0.75\}$ & Partial
   \\
  \hline
  Beta &
  $\Delta > 0.2$, $w = 0.9$ & Total
   \\
  \hline
  Beta &
  $\Delta \leq 0.2$, $w = 0.9$ & Partial
   \\
  \hline

\end{tabular}

\caption{Summary of the numerical experiments}
\label{table-numerical-experiments}
\end{table}

\color{black}

\subsubsection{General Evolution of the System}

In order to illustrate the behavior of the system as time
passes, we show how the system evolves from a uniform
distribution to one (or more) components, depending on the
deviation threshold $\Delta$. We run those sets of experiments
for 3 different values of $w$, specifically $0.5, 0.75$, and
$0.9$ and plot the probability function at times $t = 0$, $t =
20$ and $t = 100$. The simulations have been done with the
parameters $I = 200, \Delta t = 0.1, T = 100$. Although the set
of parameters might theoretically yield a big error, in
practice this error is much smaller.

\begin{figure}[ht]
  \centering
    \includegraphics[scale=0.25]{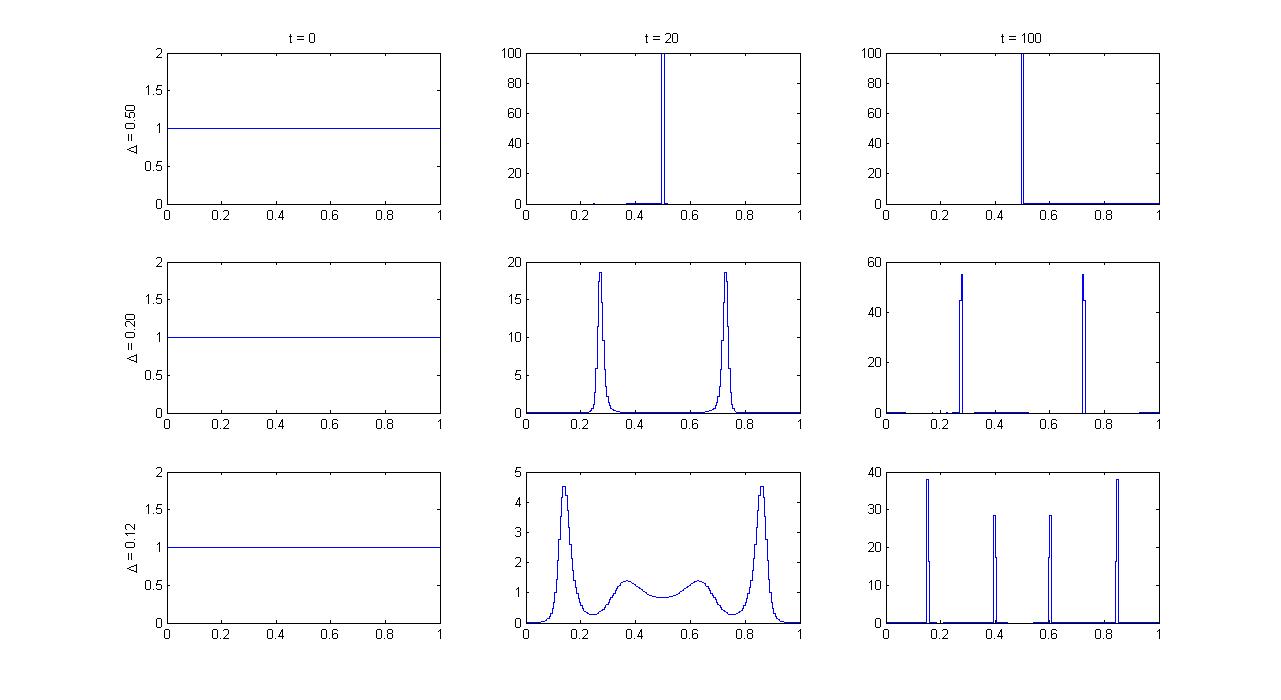}
  \caption{$w = 0.5$. Evolution  of $m(t)$ at times $t = 0, 20,100$.}
  \label{Evolution-05}
\end{figure}

\begin{figure}[ht]
  \centering
    \includegraphics[scale=0.25]{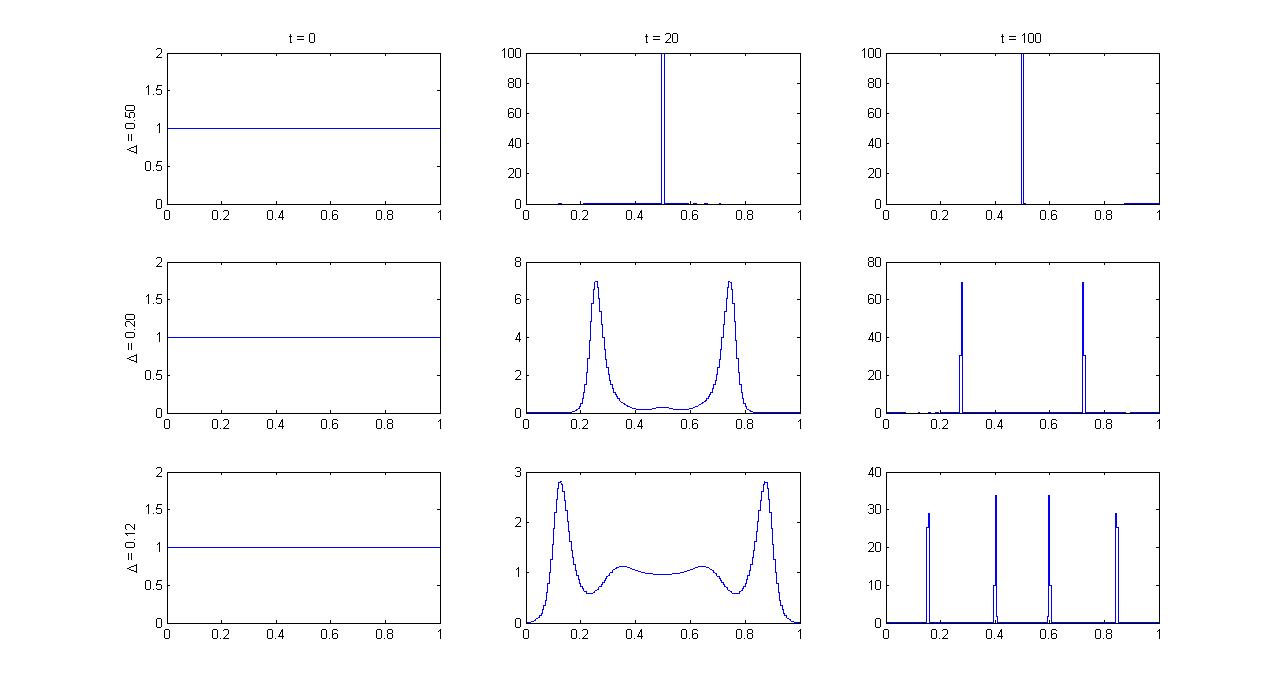}
  \caption{$w = 0.75$. Evolution of $m(t)$ at times $t = 0,20,100$.}
  \label{Evolution-075}
\end{figure}

\begin{figure}[ht]
  \centering
    \includegraphics[scale=0.25]{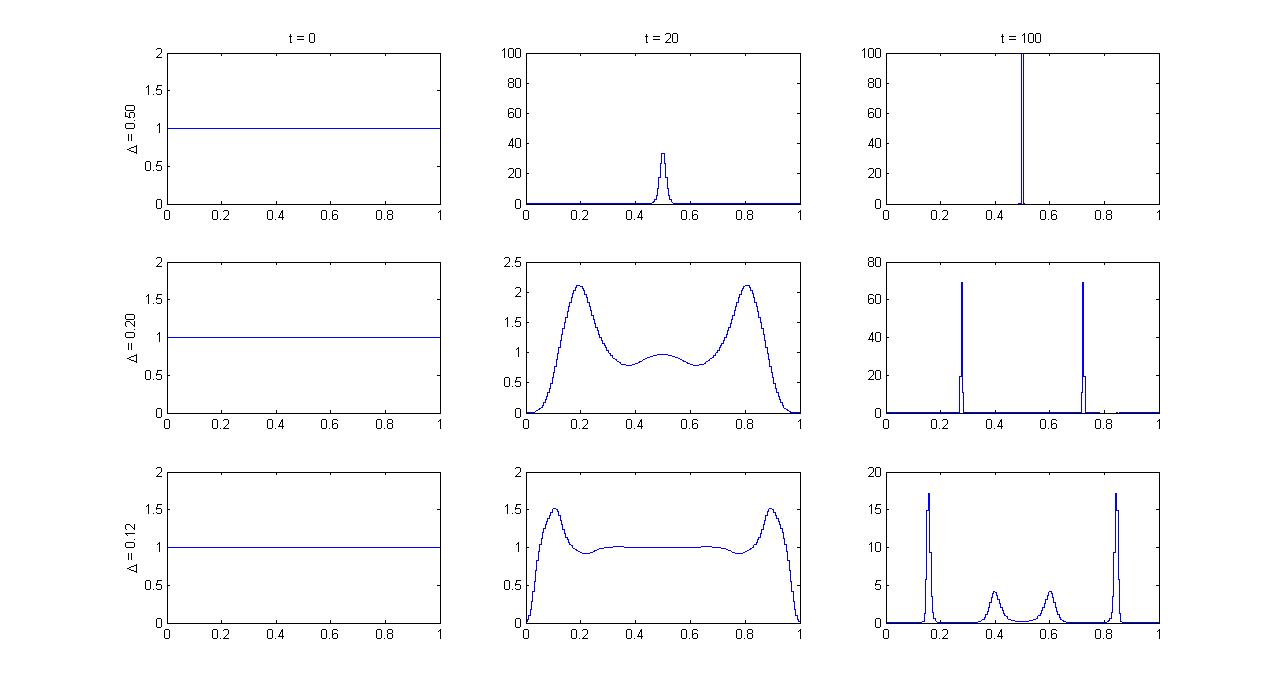}
  \caption{$w = 0.9$. Evolution of $m(t)$ at times $t = 0,20,100$.}
  \label{Evolution-09}
\end{figure}

From the images, we see that $w$  does not seem
to impact the number of components of
$m(\infty)$, but the weights do depend on $w$.

\subsubsection{Extremists versus Undecided}

We now present some common scenarios: imagine a
company fusion and the opinion of the employees
about the new company, or a rough categorization
of voters in an election. We can characterize
these opinions as extremists (either 0 or 1) or
undecided (0.5). The proportion of opinions is
$\alpha$ for the undecided and
$\frac{1-\alpha}{2}$ for each of the extremist
classes. To simulate this, we have approximated the initial conditions (Diracs) to constant splines of value $I \alpha $ and $I\frac{1-\alpha}{2}$ respectively, centered at their corresponding points, such that the initial condition has mass 1.
We plot the result (1 component, i.e.
total consensus, or 2 components) for each pair
$(\alpha,\Delta)$ in $[0,1] \times
\left[\frac{1}{2},1\right]$ in \fref{fig-cie-1}.
We know from \coref{coro-cie} that total
consensus must occur for $\Delta \geq \alpha$ and
we see that the region of convergence to total
consensus is a bit larger, and slightly depends
on $w$.

Note that values of $\Delta$ smaller than $\frac{1}{2}$ would
result in no motion at all. We do this for the previous set of
values for $w$ and find that in every case, the fraction of
undecided people necessary to achieve consensus is much smaller
than what one would expect.

\begin{figure}
\begin{center}
\subfigure[$w = 0.5$]{
\includegraphics[scale=0.25]{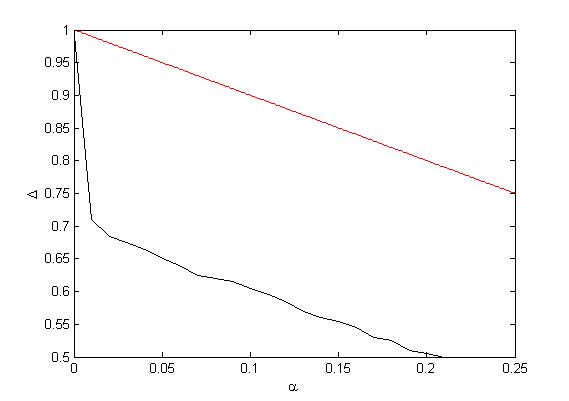}}
\subfigure[$w=0.75$]{
\includegraphics[scale=0.25]{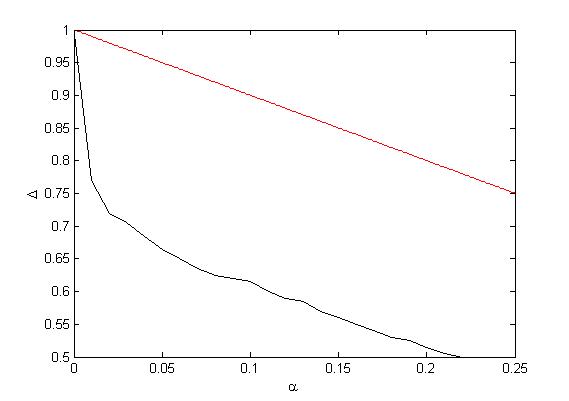}}
\subfigure[$w=0.9$]{
    \includegraphics[scale=0.25]{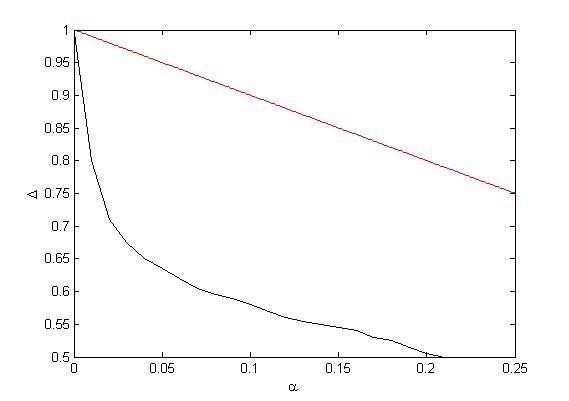}}
  \caption{Bifurcation diagram for extremists and undecided. The curly line separates the
  region of convergence to total consensus (above) from convergence to a partial consensus
  with two components. The straight line is the sufficient condition in \coref{coro-cie}.}
  \label{fig-cie-1}
  \end{center}
\end{figure}

We also plot the center of masses of the first
half of the distribution to show that it is not a
smooth function of $\alpha$ and that close to the
critical value $\Delta_c(\alpha)$ there is a
jump. We did this for the previous 3 values of
$w$ but show only one result for brevity.

\begin{figure}[h!]
  \centering
    \includegraphics[scale=0.35]{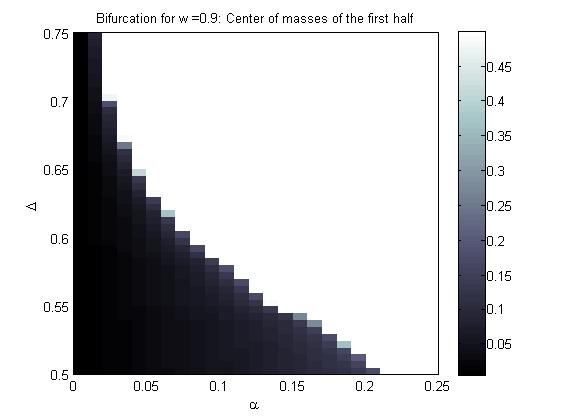}
  \caption{$w = 0.9$. Center of masses of the first half, showing that the transition is abrupt.}
  \label{ZoomFirstHalf-09}
\end{figure}

\subsubsection{Initial Uniform Conditions, Impact of $\Delta$}
We present here the evolution of the number of
components with respect to $\Delta$, using as
initial condition a uniform distribution. Note
that we have capped the situations with more than
7 components into the category ''7 or more'',
which are represented by 7 in the graph. For a
component to be considered as such, we require
that it has at least 1\% of the total mass.
Otherwise we consider it as a zero. Again, the
results are plotted for the 3 different values of
$w$.

We observe that the results are almost independent of $w$, as
there is almost no difference between the 3 curves (see
\fref{NumberOfcomponentsTogether} for the combined plot of all
3 functions). Another interesting thing to remark is that if we
compare our results for $w = 0.5$ with the deterministic model
with the ones in Ref~\refcite{deffuant2000mixing} with the
probabilistic model, the intervals of $\Delta$ in which they
have a high probability of convergence to $n$ components
correspond to the same intervals in which we have convergence
to $n$ components. This suggests that the approximation for $N
= \infty$ is good enough to preserve properties such as the
final state.


\begin{figure}[ht]
  \centering
    \includegraphics[scale=0.35]{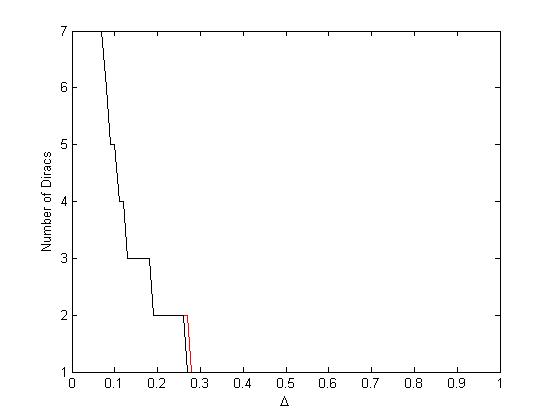}
  \caption{$\Delta$ vs Number of components of $m(\infty)$. Uniform initial conditions. Blue - $w = 0.5$ (below black), Red - $w = 0.75$, Black - $w = 0.9$}
  \label{NumberOfcomponentsTogether}
\end{figure}

\subsubsection{Beta Distribution as Initial Condition}

Here we study the evolution of the number of
components with respect to $\Delta$, using as
initial condition a Beta(1,6) distribution. The
functions that have 5 or more components have
been put into the category represented with a 5.
Again, we consider a component if it has 1\% of
the total mass or more. We present the results
for the 3 different values of $w$.

\begin{figure}
\begin{center}
\subfigure[$w=0.5$]{
\includegraphics[scale=0.25]{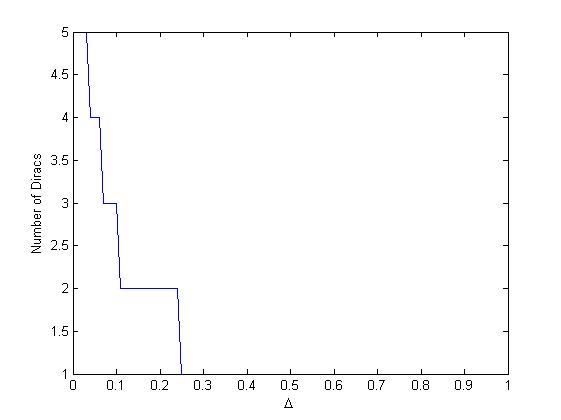}}
\subfigure[$w=0.75$]{
    \includegraphics[scale=0.25]{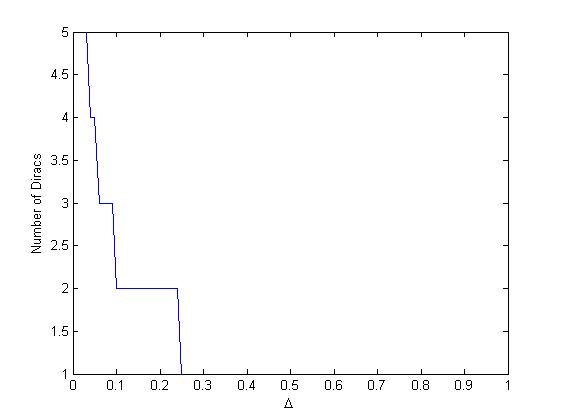}}
\subfigure[$w=0.9$]{
    \includegraphics[scale=0.25]{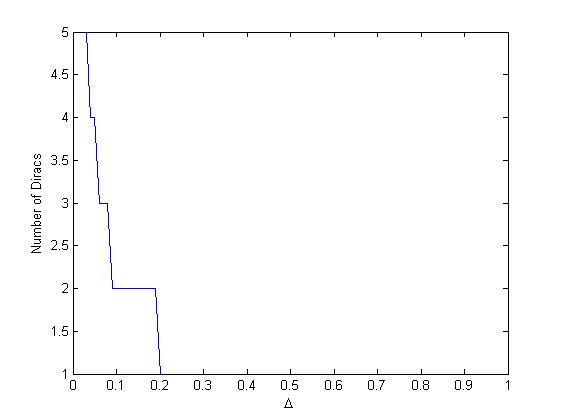}}
  \caption{$\Delta$ vs Number of components of $m(\infty)$. Initial condition Beta(1,6).}
  \label{NumberOfDiracsBeta-09}
\end{center}
\end{figure}

We can observe again the same phenomenon as in the uniform
case, namely that the influence of $w$ is negligible. If we
compare the results from the ones in Subsection 7.3, we can
conclude that the final result depends on the initial
condition, even for the same parameters $w$ and $\Delta$.
Moreover, we can see that for a fixed $(w,\Delta)$, if we start
with a Beta distribution, the number of components will be
smaller or equal than if we start with a uniform one. This is
explained by the fact that with the Beta distribution the mass
is more concentrated than with the Uniform distribution (in our
case: to the left) and therefore it should be harder (i.e,
$\Delta$ should be smaller) to split in the same number of
components.

\appendix
\setcounter{section}{1}
\section*{Appendix \thesection . Probabilistic, Topological and Measurability issues}
\label{sec-probab}

In the particular case of probability measures on $[0,1]$,
a sequence $\nu_n$ converges weakly to $\nu$ if
and only if $\cro{f,\nu_n}$ converges to $\cro{f,\nu}$ for
any continuous (and hence bounded) $f: [0,1] \to \Reals$.
Equivalently, the  cumulative distribution function (CDF) of
$\nu_n$ converges to the CDF of $\nu$ at all
continuity points of the limit.

More generally, Ethier-Kurtz\cite{ethier1986markov} will be the main reference.

Let $\calS$ be a metric space with a $\sigma$-field (not necessarily the Borel $\sigma$-field),
$\calP(\calS)$ the space of probability measures on $\calS$ (for this $\sigma$-field),
and
$D(\Reals_+,\mathcal{S})$ the Skorohod space of right-continuous paths with left-hand limits
(for this metric).

When $\calS$ is given the Borel $\sigma$-field,
the weak topology of $\calP(\calS)$ corresponds to the convergences
\[
P_n \xrightarrow[n\to\infty]{\text{weak}} P
\Leftrightarrow
\langle f,P_n \rangle \xrightarrow[n\to\infty]{} \langle f,P \rangle\,,\;
\forall f\in C_b(\mathcal{S},\Reals)
\]
where $C_b(\mathcal{S},\Reals)$ denotes the space of continuous bounded functions.
Convergence in law of random elements,
defined possibly on distinct probability spaces but having
common sample space $\calS$, is defined as weak convergence of their laws:
\[
Y_n \xrightarrow[n\to\infty]{\text{law}} Y
\Leftrightarrow
\calL(Y_n) \xrightarrow[n\to\infty]{\text{weak}} \calL(Y)
\Leftrightarrow
\E(f(Y_n)) \xrightarrow[n\to\infty]{} \E(f(Y))\,,\;
\forall f\in C_b(\mathcal{S},\Reals)\,.
\]

If $\calS$ is separable and is given the Borel $\sigma$-field,
then the weak topology is metrizable  and $\calP(\calS)$ is separable
(Ref.~\refcite{ethier1986markov}, Theorems 3.3.1 and 3.1.7).

If $\calS$ is not separable, then the Borel $\sigma$-field is usually too strong
to sustain reasonable probability measures, and $\calS$ must be given a weaker, separable,
$\sigma$-field. This causes problems between topological and measure-theoretic issues, and
classic results such as the Portmanteau theorem (Ref.~\refcite{ethier1986markov}, Theorem 3.3.1)
may fail to hold.

The natural $\sigma$-field on $D(\Reals_+,\mathcal{S})$ is the
product (or projection) $\sigma$-field of the $\sigma$-field on $\mathcal{S}$,
and will always be used in the sequel.
The classical topology given $D(\Reals_+,\mathcal{S})$ is the Skorohod topology,
which can be metrized by (3.5.2) or (3.5.21) in Ref.~\refcite{ethier1986markov}.
If $\calS$ is separable then $D(\Reals_+,\mathcal{S})$
is separable (Ref.~\refcite{ethier1986markov}, Theorem 3.5.6) and
then, if $\calS$ is given the Borel $\sigma$-field,
the Borel $\sigma$-field of the Skorohod topology and the product $\sigma$-field
coincide.
For weak convergence with a continuous limit process,
uniform convergence on bounded time intervals
may be used with adequate measurability assumptions on the test functions
(Ref.~\refcite{ethier1986markov}, Theorem 3.10.2).

\addtocounter{section}{1}
\setcounter{subsection}{0}
\section*{Appendix \thesection . Algorithm}
\label{sec-algo}
In this section we present an algorithm for the numerical solution of
Equation \eqref{eq-pdf}. The algorithm takes as input
the initial condition $f^r(x,0)$, assumed to be a
piecewise constant function and the time horizon $T$
 up to which which we want to calculate an
approximate solution. 
 It outputs an approximation of the
solution $f^r(x,T)$. It works as follows.

First, in
steps of $\Delta t$ we approximate $f^r(x,t +
\Delta t)$ by using a forward Euler method:
\ben
f^e(x,t + \Delta t) \eqdef f^r(x,t) + \Delta t \partial_t f^r(x,t)
\een
Here we exploit the fact that $f^r(x,t)$ is a
piecewise constant function, so that we can
calculate analytically the derivative which is a
piecewise linear function. The expression for the derivative is explained later.
Hence, $f^e(x,t + \Delta t)$ is also piecewise
linear (in $x$), as it is the sum of a piecewise linear
and a piecewise constant function. Then, we
approximate $f^e(x,t + \Delta t)$ with another
piecewise constant function (which we will call
$f^r(x,t + \Delta t)$ for simplicity) of
$I$
 intervals, so that we can reuse
the same scheme and we can compute explicitly the
expression for the derivative.
The constants are chosen so that the integral of $f^r(.,t)$ is equal to 1 (i.e. it is probability density).
We perform this loop until we calculate
$f^r(x,T)$ in steps of $\Delta t$. 
The algorithm is given next.
%
%
%
%
%
%
\begin{algorithm}[H]
\caption{Numerical Solution of Equation \eqref{eq-pdf}}
\begin{algorithmic}
 \State \textbf{Input}  $f^r(x,0),T,\Delta t, I$
 \State \textbf{Output} $f^r(x,T)$

 \For{$t \leftarrow 0$ \textbf{to} $T$ \textbf{step} $\Delta t$}
    \State  $f^e(x,t+\Delta t) \leftarrow f^r(x,t) + \Delta t \partial_t f^r(x,t)$
    \State  $f^r(x,t+\Delta t) \leftarrow $ \pro{PiecewiseConstantApproximation}($f^e(x,t+\Delta t),I$)
 \EndFor
\end{algorithmic}
\end{algorithm}


%

The method \pro{PiecewiseConstantApproximation} returns
a piecewise constant approximation such that the total integral equals 1.

\subsection{Piecewise constant approximation}
We choose as
piecewise constant approximation for $f^{e}(x,t)$ 
on any interval $X = [x_s,x_e]$, 
$M = \frac{\int_{X}f^{e}(x,t)dx}{x_e - x_s}$, i.e. the center of mass.

\begin{proposition}
\label{prop-num-integral}
For any $t\ge0$ it holds that
$\int_0^1 f^{r}(x,t)\,dx = 1$.
\end{proposition}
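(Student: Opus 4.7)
The plan is to argue by induction on the time steps $t_k = k\Delta t$, showing that the statement is preserved by each iteration of the algorithm. The base case is immediate since $f^r(\cdot,0)$ is prescribed to be a probability density. So I need to show that if $\int_0^1 f^r(x,t)\,dx = 1$, then $\int_0^1 f^r(x,t+\Delta t)\,dx = 1$.

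The algorithm performs two substeps: first the explicit Euler step producing $f^e(\cdot,t+\Delta t)$, and second the piecewise constant approximation yielding $f^r(\cdot,t+\Delta t)$. The second is easy: on each subinterval $X = [x_s,x_e]$ the approximation is the constant $M = \frac{1}{x_e-x_s}\int_X f^e(x,t+\Delta t)\,dx$, so $\int_X f^r(x,t+\Delta t)\,dx = M(x_e-x_s) = \int_X f^e(x,t+\Delta t)\,dx$. Summing over all subintervals of the partition of $[0,1]$ gives $\int_0^1 f^r(x,t+\Delta t)\,dx = \int_0^1 f^e(x,t+\Delta t)\,dx$.

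So it suffices to show that the Euler step preserves the integral, i.e.\ $\int_0^1 \partial_t f^r(x,t)\,dx = 0$, where $\partial_t f^r$ is computed analytically from \eref{eq-pdf} applied to the piecewise constant $f^r(\cdot,t)$. Integrating the gain term in \eref{eq-pdf} over $x\in[0,1]$ and using Fubini, one performs the change of variable $u = \frac{x-(1-w)y}{w}$ at fixed $y$, so that $x = wu + (1-w)y$ and $dx = w\,du$. Under this change, the constraint $|x-y|\le\Delta w$ becomes $|u-y|\le\Delta$, and (using that $f^r$ is supported in $[0,1]$, so integrating over $\Reals$ is the same as integrating over $[0,1]$) the gain term reads
\[
\frac{2}{w}\int_0^1\!\!\int_{|x-y|\le \Delta w} f^r\!\Bigl(\tfrac{x-(1-w)y}{w},t\Bigr)f^r(y,t)\,dx\,dy = 2\int_0^1\!\!\int_{|u-y|\le\Delta} f^r(u,t)f^r(y,t)\,du\,dy,
\]
which coincides, by Fubini, with the integrated loss term $2\int_0^1 f^r(x,t)\int_{|x-y|\le \Delta} f^r(y,t)\,dy\,dx$. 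Hence the two contributions cancel and $\int_0^1 \partial_t f^r(x,t)\,dx = 0$, closing the induction.

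The potential subtlety, and the only place where care is needed, is the change-of-variable step: it must be justified that the integrand on $[0,1]^2$ vanishes outside the effective interaction region and that the Jacobian $w$ is handled correctly so that the prefactor $2/w$ becomes~$2$. This is precisely the mass-conservation property of the kinetic equation \eref{eq-pdf}, which one expects from the underlying particle dynamics (each binary interaction preserves the total number of peers), and the calculation above makes it explicit for any integrable $f^r$, in particular for piecewise constant ones used by the algorithm.
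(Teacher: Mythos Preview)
Your proof is correct and follows the same core idea as the paper: the piecewise-constant approximation with center-of-mass values preserves the integral on each subinterval, so $\int_0^1 f^r(\cdot,t+\Delta t)=\int_0^1 f^e(\cdot,t+\Delta t)$. The paper's proof stops there, while you additionally carry out the induction and verify explicitly that the Euler step preserves the integral via the mass-conservation identity $\int_0^1 \partial_t f^r(x,t)\,dx=0$ (using the change of variable $u=\frac{x-(1-w)y}{w}$). This extra step is left implicit in the paper, so your argument is in fact more complete, but the approach is the same.
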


\subsection{Analytical expression of $\partial_t f^r(x,t)$}

Now we will give an exact expression for the derivative, using the fact that $f^r(x,t)$ is piecewise constant. This helps to understand how the calculation of the derivative is implemented and its asymptotic cost. We can write, for any $t$, that
$$ f^r(x,t) = \sum_{i=1}^{I}a_i[H(x - x_{i+1}) - H(x - x_i)]$$
where $H(x)$ is the Heaviside step function. Let us set, for any $x_{i}$ and $x_{j}$, that
\begin{align*}
I_1^{i,j}(x) &\eqdef \int_{x-\Delta}^{x+\Delta}H(x-x_{i})H(z-x_{j})dz
= \int_{-\Delta}^{\Delta}H(x-x_{i})H(x+u-x_{j})du\,,
\\
I_2^{i,j}(x) &\eqdef
\frac{1}{w}\int_{x-w\Delta}^{x+w\Delta}H(z-x_{i})H\left(\frac{x-(1-w)z-wx_{j}}{w}\right)dz\\
&=
\int_{-\Delta}^{\Delta}H(x+wu-x_{i})H(x-(1-w)u-x_{j})du\,.
\end{align*}
The expression of $I_1^{i,j}(x)$ and $I_2^{i,j}(x)$ depends on
the relative order between $x_{i}$ and $x_{j}$ and $ m =
\max{\{(1-w)x_{i} + wx_{j},x_{i} - w\Delta\}}$
and is summarized
in Tables \ref{I1} and \ref{I2}.  Finally, we can calculate
$\partial_t f^r(x,t)$ as:
\begin{align*}
 \partial_t f^r(x,t) &=-2\sum_{i,j}a_ia_j(I_1^{i,j}(x) + I_1^{i+1,j+1}(x) - I_1^{i,j+1}(x) - I_1^{i+1,j}(x)) \\
&\qquad + 2\sum_{i,j}a_ia_j(I_2^{i,j}(x) + I_2^{i+1,j+1}(x) - I_2^{i,j+1}(x) - I_2^{i+1,j}(x))\,.
 \end{align*}
%
%

\begin{table}[ht]

\footnotesize
\centering
\begin{tabular}{c|c}
  \hline
 \emph{  Case} & $I_1^{i,j}(x)$ \\
  \hline\hline
  $x_{i} \leq x_{j} - \Delta \leq x_{j} + \Delta$ &
  $\left\{
  \begin{array}{cl}
  0 & \text{ if } x \leq x_{j} - \Delta \\
  x - (x_{j} - \Delta) & \text{ if } x_{j} - \Delta \leq x \leq x_{j} + \Delta\\
  2\Delta & \text{ if } x_{j} + \Delta \leq x \\
  \end{array}
  \right.$
   \\
  \hline
  $x_{j} - \Delta \leq x_{i} \leq x_{j} + \Delta$ &
  $\left\{
  \begin{array}{cl}
  0 & \text{ if } x \leq x_{i}  \\
  x - (x_{j} - \Delta) & \text{ if } x_{i} \leq x \leq x_{j} + \Delta\\
  2\Delta & \text{ if } x_{j} + \Delta \leq x \\
  \end{array}
  \right.$
   \\
  \hline
  $x_{j} - \Delta \leq x_{j} + \Delta \leq x_{i}$ &
  $\left\{
  \begin{array}{cl}
  0 & \text{ if } x \leq x_{i} \\
  2\Delta & \text{ if } x_{i} \leq x \\
  \end{array}
  \right.$
   \\
  \hline
\end{tabular}
\caption{Expression for $I_1^{i,j}(x)$}
\label{I1}
\end{table}

\begin{table}[h!]
\footnotesize
\centering
\begin{tabular}{c|c}
  \hline
  \emph{  Case} & $I_2^{i,j}(x)$ \\
  \hline\hline
  $\barr{r} m \leq x_{i} + w\Delta 
  \leq x_{j} - (1-w)\Delta  \leq \\x_{j} + (1-w)\Delta
  \earr $ 
  & 
  $\left\{
  \begin{array}{cl}
  0 & \text{ if } x \leq x_{j}-(1-w)\Delta \\
  \frac{x - x_{j}}{1-w} + \Delta & \text{ if } x_{j} - (1-w)\Delta \leq x \leq x_{j} + (1-w)\Delta\\
  2\Delta & \text{ if } x_{j} + (1-w)\Delta \leq x \\
  \end{array}
  \right.$
   \\
  \hline
  $\barr{r} x_{i} + w\Delta \leq m \leq x_{j} - (1-w)\Delta \\ \leq x_{j} + (1-w)\Delta\earr $ & 
  $\left\{
  \begin{array}{cl}
  0 & \text{ if } x \leq x_{j}-(1-w)\Delta \\
  \frac{x - x_{j}}{1-w} + \Delta & \text{ if } x_{j} - (1-w)\Delta \leq x \leq x_{j} + (1-w)\Delta\\
  2\Delta & \text{ if } x_{j} + (1-w)\Delta \leq x \\
  \end{array}
  \right.$
   \\
  \hline
  $\barr{r}m \leq  x_{j} - (1-w)\Delta \leq x_{i} + w\Delta \\  \leq x_{j} + (1-w)\Delta\earr $ & 
  $\left\{
  \begin{array}{cl}
  0 & \text{ if } x \leq x_{j}-(1-w)\Delta \\
  \frac{x - x_{j}}{1-w} - \frac{x_{i} - x}{w} & \text{ if } x_{j} - (1-w)\Delta \leq x \leq x_{i} + w\Delta\\
  \frac{x - x_{j}}{1-w} + \Delta & \text{ if } x_{i} + w\Delta \leq x \leq x_{j} + (1-w)\Delta\\
  2\Delta & \text{ if } x_{j} + (1-w)\Delta \leq x \\
  \end{array}
  \right.$
   \\
  \hline
  $\barr{r} x_{i} + w\Delta \leq  x_{j} - (1-w)\Delta \leq m  \\ \leq x_{j} + (1-w)\Delta\earr $ & 
  $\left\{
  \begin{array}{cl}
  0 & \text{ if } x \leq m \\
  \frac{x - x_{j}}{1-w} + \Delta & \text{ if } m \leq x \leq x_{j} + (1-w)\Delta\\
  2\Delta & \text{ if } x_{j} + (1-w)\Delta \leq x \\
  \end{array}
  \right.$
   \\
  \hline
  $\barr{r}m \leq  x_{j} - (1-w)\Delta \\ \leq x_{j} + (1-w)\Delta \leq x_{i} + w\Delta\earr $ & 
  $\left\{
  \begin{array}{cl}
  0 & \text{ if } x \leq x_{j}-(1-w)\Delta \\
  \frac{x - x_{j}}{1-w} - \frac{x_{i} - x}{w} & \text{ if } x_{j} - (1-w)\Delta \leq x \leq x_{j} + (1-w)\Delta\\
  \Delta - \frac{x_{i} - x}{w} & \text{ if } x_{j} + (1-w)\Delta \leq x \leq x_{i} + w\Delta\\
  2\Delta & \text{ if } x_{i} + w\Delta \leq x \\
  \end{array}
  \right.$
   \\
  \hline
  $\barr{r} x_{i} + w\Delta \leq  x_{j} - (1-w)\Delta \\ \leq x_{j} + (1-w)\Delta \leq m\earr $ & 
  $\left\{
  \begin{array}{cl}
  0 & \text{ if } x \leq m \\
  2\Delta & \text{ if } m \leq x \\
  \end{array}
  \right.$
   \\
  \hline
  $\barr{r} x_{j} - (1-w)\Delta \leq m \leq x_{i} + w\Delta \\ \leq x_{j} + (1-w)\Delta\earr $ & 
  $\left\{
  \begin{array}{cl}
  0 & \text{ if } x \leq m \\
  \frac{x - x_{j}}{1-w} - \frac{x_{i} - x}{w} & \text{ if } m \leq x \leq x_{i} + w\Delta\\
  \frac{x - x_{j}}{1-w} + \Delta & \text{ if } x_{i} + w\Delta \leq x \leq x_{j} + (1-w)\Delta\\
  2\Delta & \text{ if } x_{j} + (1-w)\Delta \leq x \\
  \end{array}
  \right.$
   \\
  \hline
  $\barr{r} x_{j} - (1-w)\Delta \leq x_{i} + w\Delta \leq m \\ \leq x_{j} + (1-w)\Delta\earr $ & 
  $\left\{
  \begin{array}{cl}
  0 & \text{ if } x \leq m \\
  \frac{x - x_{j}}{1-w} + \Delta & \text{ if } m \leq x \leq x_{j} + (1-w)\Delta\\
  2\Delta & \text{ if } x_{j} + (1-w)\Delta \leq x \\
  \end{array}
  \right.$
   \\
  \hline
  $\barr{r} x_{j} - (1-w)\Delta \leq m \\ \leq x_{j} + (1-w)\Delta \leq x_{i} + w\Delta\earr $ & 
  $\left\{
  \begin{array}{cl}
  0 & \text{ if } x \leq m \\
  \frac{x - x_{j}}{1-w} + \frac{x_{i} - x}{w} & \text{ if } m \leq x \leq x_{j} + (1-w)\Delta\\
  \Delta - \frac{x_{i} - x}{w} & \text{ if } x_{j} + (1-w)\Delta \leq x \leq x_{i} + w\Delta\\
  2\Delta & \text{ if } x_{i} + w\Delta \leq x \\
  \end{array}
  \right.$
   \\
  \hline
  $\barr{r} x_{j} - (1-w)\Delta \\ \leq x_{i} + w\Delta \leq x_{j} + (1-w)\Delta \leq m\earr $ & 
  $\left\{
  \begin{array}{cl}
  0 & \text{ if } x \leq m \\
  2\Delta & \text{ if } m \leq x \\
  \end{array}
  \right.$
   \\
  \hline
  $\barr{r} x_{j} - (1-w)\Delta \leq  x_{j} + (1-w)\Delta \\ \leq m \leq x_{i} + w\Delta\earr $ & 
  $\left\{
  \begin{array}{cl}
  0 & \text{ if } x \leq m \\
  \Delta - \frac{x_{i} - x}{w} & \text{ if } m \leq x \leq x_{i} + w\Delta\\
  2\Delta & \text{ if } x_{i} + w\Delta \leq x \\
  \end{array}
  \right.$
   \\
  \hline
  $\barr{r} x_{j} - (1-w)\Delta \\ \leq  x_{j} + (1-w)\Delta \leq x_{i} + w\Delta \leq m\earr $ & 
  $\left\{
  \begin{array}{cl}
  0 & \text{ if } x \leq m \\
  2\Delta & \text{ if } m \leq x \\
  \end{array}
  \right.$
   \\
  \hline
\end{tabular}
\caption{Expression for $I_2^{i,j}(x)$}
\label{I2}
\end{table}
\normalsize

%
%
\subsection{Error Bound}
%
%
%
To calculate the error made by our approximation, define
$$ g^{s}(x,t) \eqdef f(x,t) \quad \mif t \geq s\geq 0,
\quad g^{s}(x,t) \eqdef  f^r(x,t) \mif 0\leq t<s,$$
and let $\nu_{e}^{t}(dx)$, $\nu_{r}^{t}(dx)$ and
$\mu_{s}^{t}(dx)$ be the measures associated to $f^{e}(x,t)$,
$f^{r}(x,t)$ and $g^{s}(x,t)$ respectively. Note that
$\nu_{r}^{t}(dx) = \mu_{t}^{t}(dx)$. Thus, we want to
bound
\begin{multline*}
 \varepsilon_{tot} = |\mu_{0}^{T}(dx) - \nu_{r}^{T}(dx) |_{T}
= \left|\sum_{k=1}^{T/(\Delta t)}\mu_{(k-1)\Delta t}^{T}(dx) - \mu_{k\Delta t}^{T}(dx)\right|_{T} \\
\leq \sum_{k=1}^{T/(\Delta t)}|\mu_{(k-1)\Delta t}^{T}(dx) - \mu_{k\Delta t}^{T}(dx)|_{T}.
\end{multline*}

We can bound the error done in each iteration of the loop by decomposing it as
\begin{multline*}
 |\mu_{k \Delta t}^{k \Delta t}(dx) - \mu_{(k-1)\Delta t}^{k\Delta t}(dx)|_{T}
 \\
 \leq |\nu_{r}^{k \Delta t}(dx) - \nu_{e}^{k \Delta t}(dx)|_{T}
+  |\nu_{e}^{k \Delta t}(dx) - \mu_{(k-1)\Delta t}^{k\Delta t}(dx)|_{T}
 \eqdef \varepsilon_{c.s} + \varepsilon_{eu}.
\end{multline*}
%
%
%
%


%
%
%

%

\begin{proposition}
\label{prop-err_ecs}
Let $M(t) = |f^r(x,t)|_{\infty}$. If $|f^r(x,0)|_{\infty} = M(0) = M < \infty$
then the following uniform bound holds:
$$ |\partial_t f^r(x,k\Delta t)|_{\infty} \leq c(M,T)\,, \quad \forall \; 0 \leq k \leq \frac{T}{\Delta t} - 1\,.$$
This results in the following bound for $\varepsilon_{c.s}$:
\begin{equation}
\label{error_truncation_2}
\varepsilon_{c.s} \leq c(M,T)\Delta t.
\end{equation}
\end{proposition}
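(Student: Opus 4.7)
I first establish a uniform bound on $M(k\Delta t)=|f^r(\cdot,k\Delta t)|_\infty$ over $0\le k\le T/\Delta t$ that depends only on $M$, $T$ and $w$ (not on $\Delta t$ or $I$). From this, Proposition~\ref{pro-absfbounded}, applied to the explicit algebraic formula defining $\partial_t f^r$ in terms of $f^r$, immediately yields the desired uniform bound $|\partial_t f^r(\cdot,k\Delta t)|_\infty\le c(M,T)$. The bound on $\varepsilon_{c.s}$ will then follow from a direct pointwise comparison of $f^e(\cdot,k\Delta t)$ and its piecewise constant average $f^r(\cdot,k\Delta t)$ on each subinterval of the partition.

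The sup-norm bound is proved by induction on $k$. The crucial observation is that \pro{PiecewiseConstantApproximation} replaces $f^e$ on each subinterval by its mean, so
\[
|f^r(\cdot,t+\Delta t)|_\infty\le|f^e(\cdot,t+\Delta t)|_\infty.
\]
Combined with the Euler update $f^e(\cdot,t+\Delta t)=f^r(\cdot,t)+\Delta t\,\partial_t f^r(\cdot,t)$ and the estimate $|\partial_t f^r(\cdot,t)|_\infty\le K_w(|f^r(\cdot,t)|_\infty+4)$ from Proposition~\ref{pro-absfbounded} (with $K_w=2/w+2/(1-w)$, valid because $f^r(\cdot,t)$ is a piecewise constant probability density by Proposition~\ref{prop-num-integral}), this yields the recursion
\[
M_{k+1}+4\le(M_k+4)(1+K_w\Delta t), \qquad M_0=M.
\]
Iterating and using $(1+K_w\Delta t)^{T/\Delta t}\le e^{K_wT}$ gives $M_k\le(M+4)e^{K_wT}-4$ for every $k\le T/\Delta t$. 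Plugging this back into Proposition~\ref{pro-absfbounded} produces a constant $c(M,T):=K_w(M+4)e^{K_wT}$ that uniformly bounds $|\partial_t f^r(\cdot,k\Delta t)|_\infty$.

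For the consistency error at step $k$, set $\tilde p:=\Delta t\,\partial_t f^r(\cdot,(k-1)\Delta t)$. On each subinterval $X$ of the partition, $f^r(\cdot,(k-1)\Delta t)$ is constant, so $f^e(\cdot,k\Delta t)=f^r(\cdot,(k-1)\Delta t)+\tilde p$, and \pro{PiecewiseConstantApproximation} replaces $\tilde p$ on $X$ by its average $\overline{\tilde p}_X$ while leaving the constant part untouched. Hence for $x\in X$,
\[
|f^e(x,k\Delta t)-f^r(x,k\Delta t)|=|\tilde p(x)-\overline{\tilde p}_X|\le 2\Delta t\,|\partial_t f^r(\cdot,(k-1)\Delta t)|_\infty\le 2c(M,T)\,\Delta t.
\]
Since the total variation distance of two absolutely continuous measures on $[0,1]$ is the $L^1$ norm of the difference of their densities, integrating over $[0,1]$ yields $\varepsilon_{c.s}\le 2c(M,T)\,\Delta t$, which is of the announced form after absorbing the factor~$2$ in the constant.

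The only delicate step is the inductive bound on $M_k$: a priori, the forward Euler discretization can amplify the sup norm at every iteration, and one must ensure this amplification remains controlled over the $T/\Delta t$ steps. This works only because (i) each step multiplies $M_k+4$ by at most $1+K_w\Delta t$, whose $T/\Delta t$-th power is dominated by $e^{K_wT}$ uniformly in $\Delta t$, and (ii) \pro{PiecewiseConstantApproximation} contributes no further amplification since averaging cannot increase $|\cdot|_\infty$. Once this discrete Gronwall-type argument is in place, the rest of the argument is a short, explicit computation.
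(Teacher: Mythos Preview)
Your proposal is correct and follows essentially the same approach as the paper: both bound $M(k\Delta t)$ by the same discrete Gronwall recursion (using that averaging does not increase the sup norm, the Euler update, and Proposition~\ref{pro-absfbounded} applied to the piecewise-constant density $f^r$, legitimized by Proposition~\ref{prop-num-integral}), and then feed this back into Proposition~\ref{pro-absfbounded} to get the uniform constant $c(M,T)$. The only cosmetic difference is in the $\varepsilon_{c.s}$ estimate: the paper bounds the $L^1$ error on each of the $I$ subintervals via a slope/area argument to obtain $\varepsilon_{c.s}\le \Delta t\,|\partial_t f^r|_\infty$ without the extra factor~$2$, whereas you use the cruder pointwise bound $|\tilde p-\overline{\tilde p}_X|\le 2|\tilde p|_\infty$ and absorb the~$2$ into $c(M,T)$.
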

The constant $c(M,T)$ needs to be evaluated empirically; in practice, numerical experiments have shown that it is of the order $10^{-6}-10^{-7}$. Then, with the known fact that
\begin{proposition}
\begin{equation}
\label{error_euler}
\varepsilon_{eu} = O((\Delta t)^2),
\end{equation}
\label{prop-error-euler}
\end{proposition}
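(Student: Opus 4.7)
The plan is to treat $\varepsilon_{eu}$ as the standard local truncation error of the forward Euler scheme applied to the initial value problem \eqref{eq-pdf}. Recall that $\mu_{(k-1)\Delta t}^{k\Delta t}(dx)$ corresponds to the \emph{exact} solution $g^{(k-1)\Delta t}(\cdot,k\Delta t)$ of \eqref{eq-pdf} at time $k\Delta t$, starting from $f^r(\cdot,(k-1)\Delta t)$ at time $(k-1)\Delta t$, whereas $\nu_e^{k\Delta t}(dx)$ corresponds to one forward Euler step from the same datum. The proposal is therefore to Taylor-expand $g^{(k-1)\Delta t}(x,\cdot)$ around $t=(k-1)\Delta t$ and to exploit the uniform bounds on time-derivatives established earlier in Section~\ref{sec-num}.

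Concretely, I would first apply Taylor's theorem with integral remainder in the time variable: for each $x\in[0,1]$,
\[
g^{(k-1)\Delta t}(x,k\Delta t)
= g^{(k-1)\Delta t}(x,(k-1)\Delta t)
+ \Delta t\,\partial_t g^{(k-1)\Delta t}(x,(k-1)\Delta t)
+ \tfrac{(\Delta t)^2}{2}\,\partial_{tt} g^{(k-1)\Delta t}(x,\xi_x),
\]
for some $\xi_x\in[(k-1)\Delta t,k\Delta t]$. By definition $g^{(k-1)\Delta t}(x,(k-1)\Delta t)=f^r(x,(k-1)\Delta t)$, and because $g^{(k-1)\Delta t}$ satisfies \eqref{eq-pdf}, its time derivative at the left endpoint equals the right-hand side of \eqref{eq-pdf} evaluated at $f^r(\cdot,(k-1)\Delta t)$, which is precisely $\partial_t f^r(x,(k-1)\Delta t)$ as used in the algorithm. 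Subtracting the definition $f^e(x,k\Delta t)=f^r(x,(k-1)\Delta t)+\Delta t\,\partial_t f^r(x,(k-1)\Delta t)$ gives
\[
g^{(k-1)\Delta t}(x,k\Delta t)-f^e(x,k\Delta t)
= \tfrac{(\Delta t)^2}{2}\,\partial_{tt}g^{(k-1)\Delta t}(x,\xi_x).
\]

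The second step is to bound the remainder. By Proposition~\ref{pro-fbounded} applied to $g^{(k-1)\Delta t}$ with initial datum $f^r(\cdot,(k-1)\Delta t)$, $\lvert g^{(k-1)\Delta t}(\cdot,t)\rvert_\infty$ is controlled on $[(k-1)\Delta t,k\Delta t]$ by a constant depending only on $\lvert f^r(\cdot,(k-1)\Delta t)\rvert_\infty$ and $T$. Then Proposition~\ref{pro-absfbounded} and one further differentiation in time of \eqref{eq-pdf} (as in Corollary~\ref{cor-derivatives_bounded}) yield a uniform bound $\lvert \partial_{tt}g^{(k-1)\Delta t}(\cdot,t)\rvert_\infty\le C(M,T)$ on the interval, where $M$ is an a priori bound for $\lvert f^r(\cdot,(k-1)\Delta t)\rvert_\infty$ (itself controllable since the piecewise constant projection does not increase $L^\infty$ by more than a multiplicative factor, and Proposition~\ref{pro-fbounded} bounds growth over $[0,T]$). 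Converting the pointwise estimate into a total variation estimate via the inequality $\lvert \mu_{(k-1)\Delta t}^{k\Delta t}(dx)-\nu_e^{k\Delta t}(dx)\rvert_T \le \int_0^1 \lvert g^{(k-1)\Delta t}(x,k\Delta t)-f^e(x,k\Delta t)\rvert\,dx$ gives $\varepsilon_{eu}\le \tfrac{1}{2}C(M,T)(\Delta t)^2 = O((\Delta t)^2)$.

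The main obstacle, which is more bookkeeping than substance, is ensuring that the constant $C(M,T)$ can be taken independent of the iteration index $k$, so that summing the $T/\Delta t$ local errors produces the expected global bound $O(\Delta t)$. This reduces to showing that $\lvert f^r(\cdot,k\Delta t)\rvert_\infty$ remains uniformly bounded on $[0,T]$: the piecewise constant projection chosen as a local average does not increase $L^\infty$ norms, and the Euler update can be controlled by the bound in Proposition~\ref{pro-fbounded} combined with Proposition~\ref{pro-absfbounded}. Once this uniform boundedness is in place, the Taylor remainder argument above delivers \eqref{error_euler} immediately.
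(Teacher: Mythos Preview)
Your proposal is correct and follows essentially the same approach as the paper: a Taylor expansion in time of $g^{(k-1)\Delta t}(x,\cdot)$ around $(k-1)\Delta t$, identification of the zeroth and first order terms with $f^e$, and a uniform-in-$k$ bound on $\partial_{tt}g^{(k-1)\Delta t}$ obtained from Propositions~\ref{pro-fbounded}, \ref{pro-absfbounded} and Corollary~\ref{cor-derivatives_bounded} together with the fact that the local-average projection does not increase $L^\infty$. The only cosmetic difference is that the paper writes the remainder as $\tfrac12(\Delta t)^2\,\lvert\partial_{tt}^2 g^{(k-1)\Delta t}(\cdot,(k-1)\Delta t)\rvert_\infty + O((\Delta t)^3)$ whereas you use the Lagrange form at an intermediate point $\xi_x$, which is slightly cleaner.
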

%
using Equations (\ref{error_truncation_2}) and (\ref{error_euler}) yields
%
$$ |\mu_{k \Delta t}^{k \Delta t}(dx) - \mu_{(k-1)\Delta t}^{k\Delta t}(dx)|_{T} \leq \varepsilon_{c.s} + \varepsilon_{eu}
= c\Delta t + O\left((\Delta t)^2\right).$$
Finally, we  bound $|\mu_{(k-1)\Delta t}^{T}(dx) - \mu_{k\Delta t}^{T}(dx)|_{T}$ in terms of $|\mu_{k \Delta t}^{k \Delta t}(dx) - \mu_{(k-1)\Delta t}^{k\Delta t}(dx)|_{T}$.
\begin{proposition}
For all $1 \leq k \leq \frac{T}{\Delta t}$ and for all $t \geq
k\Delta t$ we have that
\bearn \left|\mu_{k\Delta t}^{t}(dx) - \mu_{(k-1)\Delta
t}^{t}(dx)\right|_{T}
  &\leq &e^{8(t-k\Delta t)}\left|\mu_{k \Delta t}^{k \Delta t}(dx) -
\mu_{(k-1)\Delta t}^{k\Delta t}(dx)\right|_{T}.
 \eearn
%
\label{prop-error-continuation}
\end{proposition}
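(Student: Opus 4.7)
The inequality is a quantitative continuous-dependence statement for Problem~1. By construction, for $t \geq k\Delta t$ both $\mu_{k\Delta t}^{t}$ and $\mu_{(k-1)\Delta t}^{t}$ are solutions of Problem~1 on the interval $[k\Delta t,\infty)$ --- the first starting from the piecewise-constant datum $\nu_{r}^{k\Delta t}$, the second from its already-evolved counterpart $\mu_{(k-1)\Delta t}^{k\Delta t}$. So the claim is really that the solution map of Problem~1, viewed in total variation norm, is Lipschitz-continuous in the initial measure with exponential constant $8t$, sharpening the qualitative continuity of Theorem~\ref{theo-eunl}.\ref{eunl1}.

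\textbf{Key estimate.} To pin down the constant~$8$, I would work with the symmetric weak form \eqref{eq-alt-def-pb1}. Fix a test function $h\in L^\infty[0,1]$ with $\|h\|_\infty\le 1$, set
$$\Phi_h(x,y)=[h(wx+(1-w)y)+h(wy+(1-w)x)-h(x)-h(y)]\ind{|x-y|\le\Delta},$$
so that $\|\Phi_h\|_\infty\le 4$, and subtract the weak equations satisfied by $\mu_1(\cdot):=\mu_{k\Delta t}^{\cdot}$ and $\mu_2(\cdot):=\mu_{(k-1)\Delta t}^{\cdot}$ on $[k\Delta t,t]$. The quadratic tensor difference splits algebraically as
$$\mu_1(s)^{\otimes 2}-\mu_2(s)^{\otimes 2}=(\mu_1(s)-\mu_2(s))\otimes\mu_1(s)+\mu_2(s)\otimes(\mu_1(s)-\mu_2(s)),$$
and since $\mu_1(s),\mu_2(s)$ are probability measures, Fubini together with the duality \eqref{tvn} between $\|\cdot\|_\infty$ and the total variation norm produces the integrand bound $|\langle\Phi_h,\mu_1(s)^{\otimes 2}-\mu_2(s)^{\otimes 2}\rangle|\le 2\cdot 4\cdot|\mu_1(s)-\mu_2(s)|=8|\mu_1(s)-\mu_2(s)|$. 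Taking the supremum over $\|h\|_\infty\le 1$ of the subtracted weak equation then yields the Volterra-type inequality
$$|\mu_1(t)-\mu_2(t)|\le|\mu_1(k\Delta t)-\mu_2(k\Delta t)|+8\int_{k\Delta t}^{t}|\mu_1(s)-\mu_2(s)|\,ds.$$

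\textbf{Conclusion and anticipated difficulty.} Gronwall's lemma immediately produces the asserted factor $e^{8(t-k\Delta t)}$. The argument is routine once the symmetric weak form is adopted; the only delicate point is the counting that yields the precise exponent $8=2\times 4$ --- a factor $4$ from the four-term symmetrization in $\Phi_h$, and a factor $2$ from the bilinear splitting of $\mu_i(s)^{\otimes 2}$. Using the asymmetric generator form \eqref{eq-fm} instead would produce a worse constant, so the symmetrized formulation \eqref{eq-alt-def-pb1} is essential. Measurability of $s\mapsto|\mu_1(s)-\mu_2(s)|$ needed to apply Gronwall is inherited from the total-variation continuity stated in Theorem~\ref{theo-eunl}.\ref{eunl1}, and the possible discrepancy in notation $|\cdot|_T$ versus $|\cdot|$ is harmless since the objects compared are measures on $[0,1]$ at a single time.
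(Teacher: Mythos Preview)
Your proof is correct and follows the same overall strategy as the paper: establish a Gronwall-type inequality for the total variation distance between two solutions of Problem~1, with Lipschitz constant~$8$ coming from a bilinear splitting of the quadratic nonlinearity.

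The implementation differs, however. The paper works with the densities $g^{k\Delta t}(\cdot,t)$ and $g^{(k-1)\Delta t}(\cdot,t)$ and the strong functional form~\eqref{eq-pdf}: it differentiates the $L^1$ distance in time, splits the loss term and the gain term each as $ab-cd=(a-c)d+a(b-d)$, and for the gain term performs the change of variables $y\mapsto\frac{x-(1-w)y}{w}$ to convert the prefactor $\frac{2}{w}$ back into~$2$ before using $\int g=1$. Each of the four pieces is bounded by $2\int|g^{k\Delta t}-g^{(k-1)\Delta t}|$, yielding the same constant~$8$. Your weak-formulation argument sidesteps both the need for densities and the explicit change of variables, since the symmetrized kernel $\Phi_h$ already has $\|\Phi_h\|_\infty\le4$ and the tensor split against probability marginals gives the remaining factor~$2$ directly. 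This is a genuine simplification and is more general (it does not rely on Theorem~\ref{theo-pdf}).

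One small inaccuracy: your remark that the asymmetric form~\eqref{eq-fm} would give a worse constant is not correct. There the integrand $[h(wx+(1-w)y)-h(x)]\ind{|x-y|\le\Delta}$ has sup norm~$\le 2$, and together with the explicit prefactor~$2$ and the bilinear split one again recovers~$8$. The symmetric form is convenient but not essential for the sharp constant.
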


Combining the previous propositions, we obtain:
\begin{theorem}
For any fixed $T$, the error of the method is $C + O(\Delta t)$, where $C$ is a constant that depends on $c$ and $T$.
\label{theo-order-method}
\end{theorem}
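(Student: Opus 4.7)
The plan is to combine the per-step error bounds from Propositions~\ref{prop-err_ecs}, \ref{prop-error-euler} and the propagation estimate of Proposition~\ref{prop-error-continuation} through the telescoping identity already displayed just above the theorem. Using the notations $\mu_s^t$ introduced there, one has
\begin{equation*}
\varepsilon_{tot} = |\mu_0^T - \nu_r^T|_T \le \sum_{k=1}^{T/\Delta t} |\mu_{(k-1)\Delta t}^T - \mu_{k\Delta t}^T|_T\,,
\end{equation*}
so the whole task reduces to controlling each summand and then summing.

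First I would apply Proposition~\ref{prop-error-continuation} with $t=T$ to each term, obtaining
\begin{equation*}
|\mu_{(k-1)\Delta t}^T - \mu_{k\Delta t}^T|_T \le e^{8(T-k\Delta t)}\,|\mu_{k\Delta t}^{k\Delta t} - \mu_{(k-1)\Delta t}^{k\Delta t}|_T\,.
\end{equation*}
Next, the one-step error $|\mu_{k\Delta t}^{k\Delta t} - \mu_{(k-1)\Delta t}^{k\Delta t}|_T$ is precisely the quantity decomposed into $\varepsilon_{c.s}+\varepsilon_{eu}$ just above the theorem, which by Propositions~\ref{prop-err_ecs} and \ref{prop-error-euler} is at most $c(M,T)\Delta t + O((\Delta t)^2)$. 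Substituting yields
\begin{equation*}
\varepsilon_{tot} \le \bigl(c(M,T)\Delta t + O((\Delta t)^2)\bigr)\sum_{k=1}^{T/\Delta t} e^{8(T-k\Delta t)}\,.
\end{equation*}

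To finish, I would recognize $\sum_{k=1}^{T/\Delta t} e^{8(T-k\Delta t)}\Delta t$ as a left Riemann sum for $\int_0^T e^{8s}\,ds = (e^{8T}-1)/8$; since $s\mapsto e^{8s}$ is smooth on $[0,T]$, the Riemann sum equals $(e^{8T}-1)/8 + O(\Delta t)$. Therefore
\begin{equation*}
\varepsilon_{tot} \le c(M,T)\,\frac{e^{8T}-1}{8} + O(\Delta t) \;=\; C + O(\Delta t)\,,
\end{equation*}
with $C := c(M,T)(e^{8T}-1)/8$, a constant depending only on $c$ (hence on the piecewise-constant approximation constant) and on $T$. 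The quadratic remainder $O((\Delta t)^2)$ multiplied by the sum (whose size is $O(1)$ after dividing by $\Delta t$) contributes only an extra $O(\Delta t)$, absorbed in the same error term.

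The only nontrivial steps are keeping careful track of the constants in the Riemann-sum approximation and ensuring that the bound from Proposition~\ref{prop-err_ecs} is indeed uniform in $k$ on $[0,T]$, which is guaranteed since $M(t)$ is controlled by Proposition~\ref{pro-fbounded} over any bounded time interval. I expect no real obstacle beyond bookkeeping: the exponential factor $e^{8T}$ is the unavoidable price of error propagation over $[0,T]$ given by Proposition~\ref{prop-error-continuation}, and appears as the $T$-dependence of the constant $C$.
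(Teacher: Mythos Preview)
Your proof is correct and follows the same structure as the paper's: telescoping sum, then Proposition~\ref{prop-error-continuation} to propagate each one-step error, then Propositions~\ref{prop-err_ecs} and~\ref{prop-error-euler} for the per-step bound. The only difference is that the paper bounds each factor $e^{8(T-k\Delta t)}$ crudely by $e^{8T}$ and multiplies by the number of terms $T/\Delta t$, obtaining $C = c\,T e^{8T}$, whereas you evaluate the geometric sum as a Riemann sum to get the sharper $C = c(e^{8T}-1)/8$; this refinement is harmless but unnecessary for the stated $C + O(\Delta t)$ conclusion.
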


\subsection{Complexity}

We will now give the complexity analysis of the algorithm. The computation of the derivative takes $O(I^2)$, where $I$ is the number of intervals, since there is a double sum over $I$ intervals. Also, this produces $O(I^2)$ splines because every $I_{k}^{i,j}(x), k=1,2$ is composed of at most 4 splines. Since the splines are not produced in increasing order of $x$, we need to sort them, which takes $O(I^2 \log I)$ time. 
Finally, we only need one pass to make the piecewise constant spline approximation since now everything is sorted. This takes $O(I^2)$ time. Since all this loop is executed $ \frac{T}{\Delta t}$ times,
the running time has complexity $ O\left(\frac{1}{\Delta t}I^2
\log I\right)$.
\addtocounter{section}{1}
\setcounter{subsection}{0}
\setcounter{theorem}{0}
\section*{Appendix \thesection . Proofs}
\label{sec-proof}


\subsection{Proof of \pref{lem-convex}}

By definition, since $h$ is convex,
\begin{align*}
h(wx+(1-w)y)&\le wh(x) +(1-w)h(y)\,,
\\
h(wy+(1-w)x)&\le wh(y) +(1-w)h(x)\,,
\end{align*}
with strict inequalities if $h$ is strictly convex except when $x= y$ or $w\in\{0,1\}$,
and summing these two inequalities yields the result.

\subsection{Proof of \pref{pro-partcons}}

The first statement is obvious, since a partial consensus is an
absorbing state.

We prove the second statement. It follows from the second
statement in \coref{cor-fin-mom} that, if the two peers, say $(i,j)$ chosen
at any time slot $k'$ are such that $\abs{X^N_i(k') -
X^N_j(k')} \leq \Delta$ and $X^N_i(k') \neq X^N_j(k')$, then
$\mu^N_n(k'+ 1) < \mu^N_n(k')$. Assume now that the hypothesis
of the second statement holds. It follows that all peers chosen
for interaction at times $k'\geq k$ have reputation values that
either differ by more than $\Delta$, or are equal, thus, at any
time slot $k'\geq k$, the interaction has no effect. It follows
that $M^N(k)=M^N(k')$ for $k'\geq k$.

Further, assume that $M^N(k)$ is not a partial consensus. Thus,
there exists a pair of peers $(i,j)$ such that $\abs{X^N_i(k) -
X^N_j(k)} \leq \Delta$ and $X^N_i(k) \neq X^N_j(k)$. The pair
$(i,j)$ is never chosen in a interaction at times $k' \geq k$,
for otherwise this would contradict the fact that $M^N(k')$ is
stationary. But this occurs with probability $0$.

\subsection{Proof of \pref{pro-clusters}}

Let $i$ and $j$ be the peers selected for interaction at time $k+1$. If at time $k$
they were in different clusters, then nothing happens and the proposition holds. Assume now that at time $k$ they were in
the same cluster, say $C_{\ell}$. Let $i'$ be a peer not in $C_{\ell}$ at time $k$;
at time $k+1$ after interaction, the opinions of $i$ and $j$  have moved closer,
hence farther from $i'$ to which they are still not connected.
Hence, the only difference between connections at
time $k$ and $k+1$ concern pairs of peers that that are both in
the same cluster, and the result easily follows.

\subsection{Proof of \thref{thm-longtimelim}}

Let $\sigma^2(k)$ be the variance of
 $M^N(k)$ (we drop superscript $N$ in the
 notation local to this proof).
 By \coref{discrete_variance_drop}, $\sigma(k)$ is non-decreasing and non-negative,
 and thus converges to some $\sigma(\infty)$.

%
For $k\geq K^N$ the set of clusters remains the same,
$\calC^N(k)=\{C_1, ..,C_{\ell}\}$, and we can thus define the
diameter of cluster $\ell_1 \in \{1,\dots, L^N\}$ by
 \be \delta_{\ell_1}(k)= \max_{i,j\in C_{\ell_1}} \abs{X^N_i(k)-X^N_j(k)}
 \label{eq-proof-max}
 \ee
 and set
 \ben
 \delta_{\ell_1} = \limsup_{k \geq K^N}\delta_{\ell_1}(k)
 \een
Assume that $\delta_{\ell_1} >0$ for some $\ell_1$. Since the sequence
$\sigma^2(k)$ converges, there exists some random
time $K_1\geq K^N$ such that for all $k,k'>K_1$ we have
 \be
 \abs{\sigma^2(k') -
\sigma^2 ( k ) }< \frac{2w(1-w)}{N}
\lp\frac{\delta_{\ell_1}}{2}\rp^2\,,
 \label{eq-pr-cs}
\ee
and there is an
infinite subsequence of time slots $K_2(n) \geq K_1$ for $n \in \Nats$ such that
\bearn
 \delta_{\ell_1}(K_2(n)) &>&\frac{\delta_{\ell_1}}{2}>0\,.
 \eearn

For $k \geq K^N$, let $(I(k), J(k))$ be a pair of peers that
achieves the maximum in \eqref{eq-proof-max} and let $E_k$ be
the event  ``the pair of peers selected for interaction at time
$k$ is $(I(k),J(k))$". The probability of $E_k$, conditional to
all past up to time slot $k$, is $\frac{2}{N(N-1)}$, thus is
constant and positive. Thus the probability that $E_k$ occurs
infinitely often is $1$, \emph{i.e.}, with probability $1$ we can
extract an infinite subsequence of time slots $K_3(n)$ of
$K_2(n)$ such that $E_{K_3(n)}$ is true.
The following lemma then implies that
\bearn
 \sigma^2 \lp K_3(n) +1\rp- \sigma^2 \lp K_3(n) \rp &>&
 \frac{2w(1-w)}{N}\lp
 \frac{\delta_{\ell_1}}{2} \rp^2
 \eearn
which contradicts \eqref{eq-pr-cs}, which proves by contradiction that
$\delta_{\ell_1}= 0 $.

\begin{lemma}
  Let $(i,j)$  be the pair of peers chosen for interaction at time slot
 $k$. Assume that $\abs{X^N_i(k)-X^N_j(k)} \leq
 \Delta$. Then the reduction in variance is
 $\sigma^2(k+1)-\sigma^2(k)=\frac{2w(1-w)}{N}\lp
 X^N_i(k)-X^N_j(k) \rp^2$.
 \end{lemma}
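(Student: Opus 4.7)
The plan is to reduce the computation to the second moment and expand directly. Since Corollary~\ref{cor-fin-mom} gives that the mean $\mu_1^N$ is preserved under one interaction step, we have $\sigma^2(k+1)-\sigma^2(k) = \mu_2^N(k+1)-\mu_2^N(k)$, so it suffices to compute the change in the empirical second moment.

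Next, because only peers $i$ and $j$ change their opinions, only the corresponding two terms in the sum $\frac{1}{N}\sum_n X_n^N(k)^2$ are affected. Writing $x=X_i^N(k)$, $y=X_j^N(k)$, the increment in $N\mu_2^N$ is
\[
\bigl(wx+(1-w)y\bigr)^2+\bigl(wy+(1-w)x\bigr)^2 - x^2 - y^2.
\]
Expanding the two squares, the cross term $2w(1-w)xy$ appears twice and combines with the diagonal contributions to give
\[
\bigl(w^2+(1-w)^2\bigr)(x^2+y^2)+4w(1-w)xy - (x^2+y^2) = -2w(1-w)(x-y)^2,
\]
using the identity $w^2+(1-w)^2 = 1-2w(1-w)$ and $(x-y)^2=x^2+y^2-2xy$. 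Dividing by $N$ and combining with the invariance of the mean yields the claimed formula (up to the sign convention of ``reduction'').

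There is really no obstacle here: the identity is purely algebraic and relies on no probabilistic input beyond the deterministic update rule. The only subtlety worth flagging is that the assumption $|x-y|\le\Delta$ is what ensures the interaction actually triggers the barycentric move (otherwise both sides would be zero). Note also that the computation makes transparent the connection with Proposition~\ref{prop-convex}: applying that proposition to the strictly convex function $h(u)=u^2$ gives the nonpositivity of the same quantity, and our explicit identity sharpens it to an equality with the explicit factor $2w(1-w)(x-y)^2$.
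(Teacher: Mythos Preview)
Your proof is correct and is precisely the direct computation the paper alludes to; the paper's own proof consists solely of the words ``By direct computation.'' Your observation about the sign is also well taken: the computation gives $\sigma^2(k+1)-\sigma^2(k)=-\frac{2w(1-w)}{N}(X^N_i(k)-X^N_j(k))^2$, and in the surrounding argument only the absolute value is used.
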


 \begin{proof}
 By direct computation.
 \end{proof}

Let $\mu_{\ell_1}(k)$ be the empirical mean of cluster $\ell_1$
at time $k\geq K^N$. Since interactions that modify the state
of the process at times $k\geq K^N$ are all intra-cluster, it
follows that $\mu_{\ell_1}(k)=\mu_{\ell_1}(K^N):=
\mu_{\ell_1}(\infty)$ for all $k \geq K^N$. For $i\in
C_{\ell_1}$ it holds that $\abs{X^N_i(k)-\mu_{\ell_1}(k)}\leq
\delta_{\ell_1}(k)\to 0$, and hence $X^N_i(k)\to
\mu_{\ell_1}(\infty)$ as $k \to \infty$. Thus, for any
continuous $f: [0,1] \to \Reals$:
 \ben
 \limit{k}{\infty} \cro{f,M^N(k)} = \frac{1}{N}
 \sum_{\ell_1=1}^{L^N} N_{\ell_1}
 f\lp \mu_{\ell_1}(\infty)\rp
 \een where $N_{\ell_1}$ is the cardinality of $C_{\ell_1}$.
This shows that, with probability 1, $M^N(k)$ converges to
$M^N(\infty)=\frac{1}{N}
 \sum_{\ell_1=1}^{L^N} N_{\ell_1}
 \delta_{ \mu_{\ell_1}(\infty)}$.

 It remains to show that $M^N(\infty)$ is a partial consensus. This
 follows from the fact that if $i$ and $j$ are not in the same
 cluster at time slot $k$, then
$
  \abs{X^N_i(k)-X^N_j(k)} > \Delta
$, which implies that
$\abs{\mu_{\ell_1}(k)-\mu_{\ell_2}(k)}>\Delta$ if $\ell_1 \neq
 \ell_2$ and, since, $\mu_{\ell_1}(k)$ is stationary for $k$
 large enough, that
 $\abs{\mu_{\ell_1}(\infty)-\mu_{\ell_2}(\infty)}>\Delta$.

%

\subsection{Proof of \thref{theo-eunl}}

We write \eqref{eq-genfam} and \eqref{eq-fm} in the notation of
Section 2.2 in Graham\cite{graham2000chaoticity},
in which the corresponding equations are (2.5) and (2.7), and
\begin{align*}
\calA(\mu)h(x)
&=
2 \big\langle [h(w x + (1-w) y)-h(x)] \ind{|x-y|\le\Delta} \,, \mu(dy) \big\rangle
\\
&= \int(h(z)-h(x)) J(\mu,x,dz)
\end{align*}
for $J(\mu,x,dz)$ the image measure of $\ind{|x-y|\le\Delta} 2\mu(dy)$ by
$y\mapsto w x + (1-w) y$.
Since
$|J(\mu,x,\cdot)| \le 2$ and $|J(\mu,x,\cdot) - J(\nu,x,\cdot)| \le 2|\mu-\nu|$,
the assumptions of
Proposition~2.3 in Ref.~\refcite{graham2000chaoticity} are satisfied, yielding the results.
The family \eqref{eq-genfam} is uniformly bounded by $4$ in operator norm,
and thus there is a well-defined inhomogeneous Markov process with generator $\calA (m(t))$ at time $t$
and arbitrary initial law.

\subsection{Proof of \thref{theo-chaotic-aux}}

First, the proof of \emph{(\ref{chaotic-aux1})}.
The generator $\calA^N$ corresponds to the ``binary mean-field model'' (2.6) in
Graham-M\'el\'eard\cite{graham1997stochastic} with $N$ instead of $n$
and $\calL_i=0$,
and (using $\sum_{1\le i\neq j\le N} = 2 \sum_{1\le i<j\le N}$) ``jump kernel''
\[
\widehat{\mu}(x,y,dh,dk)
= \ind{|x-y|\le\Delta} 2 \delta_{\{(w-1)x + (1-w)y, (w-1)y + (1-w)x\}}(dh,dk)
\]
which is uniformly bounded in total mass by $\Lambda=2$.
We conclude with Theorem~3.1 in Ref.~\refcite{graham1997stochastic}  and
the triangular inequality
$|\frac1N \sum_{i=1}^N \calL(\widehat{X}^N_i) - Q|_T
\le | \calL(\widehat{X}^N_i) - Q|_T$
(the $\widehat{X}^N_i$ are exchangeable).

Now, the proof of \emph{(\ref{chaotic-aux2})}. As in
the proof of Theorem~3.1 in Ref.~\refcite{graham1997stochastic},
\[
\biggl\langle\phi ,
\widehat{\Lambda}^N
- \frac1N \sum_{i=1}^N \calL(\widehat{X}^N_i)
\biggr\rangle^2
=
\frac1{N^2}
\Biggl[\sum_{i=1}^N
(\phi(\widehat{X}_i^N) - \E[\phi(\widehat{X}_i^N)])
\Biggr]^2
\]
in which
\begin{multline*}
\Biggl[\sum_{i=1}^N
(\phi(\widehat{X}_i^N) - \E[\phi(\widehat{X}_i^N)])
\Biggr]^2
=
\sum_{i=1}^N
(\phi(\widehat{X}_i^N) - \E[\phi(\widehat{X}_i^N)])^2
\\
+ \sum_{1\le i \neq j \le N}
(\phi(\widehat{X}_i^N) - \E[\phi(\widehat{X}_i^N)])
(\phi(\widehat{X}_j^N) - \E[\phi(\widehat{X}_j^N)])
\end{multline*}
where the first sum on the r.h.s. has $N$ terms, the second $N(N-1)$, and
\begin{multline*}
\E\bigl[
(\phi(\widehat{X}_i^N) - \E[\phi(\widehat{X}_i^N)])
(\phi(\widehat{X}_j^N) - \E[\phi(\widehat{X}_j^N)])
\bigr]
\\
= \E[\phi(\widehat{X}_i^N)\phi(\widehat{X}_j^N)] - \E[\phi(\widehat{X}_i^N)]\E[\phi(\widehat{X}_j^N)]\,,
\end{multline*}
and we conclude to the first formula in \emph{(\ref{chaotic-aux2})} using \emph{(\ref{chaotic-aux1})} for $k=2$.

Classically,
the weak topology in the Polish space $\calP(D(\Reals_+,[0,1]))$
has a convergence-determining sequence $(g_m)_{m\ge1}$
of continuous functions bounded by~$1$ (such a sequence is constructed in
the proof of Proposition 3.4.4 in Ethier-Kurtz\cite{ethier1986markov}), and can
thus be metrized by
$
d(P,Q) = \bigl(\sum_{i\ge1} 2^{-i}\cro{g_m, P-Q}^2\bigl)^{1/2}
$.
Moreover, the first formula in \emph{(\ref{chaotic-aux2})} and
the second in \emph{(\ref{chaotic-aux1})}
imply that $\E(d(\widehat{\Lambda}^N, Q)^2)$ goes to $0$, which
proves convergence in probability for $\widehat{\Lambda}^N$.

The result for $\widehat{\Lambda}^N$ implies the result for its
marginal process $\widehat{M}^N$ as a quite general topological
fact, since the limit marginal process $m$ is continuous and
the spaces are Polish (Theorem 4.6 in Graham-M\'el\'eard,\cite{graham1997stochastic}
Section 4.3 in M\'el\'eard\cite{meleard1996asymptotic}); proofs first use the
Skorohod topology, and then Theorem 3.10.2 in
Ref.~\refcite{ethier1986markov}.

\subsection{Proof of \thref{theo-aux-resc}}

Let $\lambda_N: \Reals_+\to\Reals_+$ be the (random) time-change given
by the linear interpolation of $\lambda_N(\frac{k}N) = \frac{T_k}N$, \emph{i.e.}, by
\[
t\in\biggl[\frac{k}N , \frac{k+1}N\biggr]
\mapsto
\lambda_N(t) = (k+1-tN)\frac{T_k}N + (tN-k)\frac{T_{k+1}}N\,,
\qquad
k\in\Nats\,.
\]
Then \eqref{eq-rescaux} implies that
\[
\widetilde{X}^N(t) =
\widehat{X}^N(\lambda_N(t))\,,
\qquad t\in\Reals_+\,,
\]
so that their atomic distance is null.
The triangular inequality yields, for $k\in\Nats$,
\[
|\lambda_N(t)-t|\le \biggl|\frac{T_k}N-\frac{k}N\biggr|
+\frac1{N}(T_{k+1}-T_k) + \frac1{N}\,,
\qquad
t\in\biggl[\frac{k}N , \frac{k+1}N\biggr]\,,
\]
and hence, for any $T>0$,
\[
\sup_{0\le t \le T}|\lambda_N(t)-t|
\le \frac1N\sup_{0\le k \le \lfloor NT \rfloor }|T_k-k|
+ \frac1{N}\sup_{0\le k \le \lfloor NT \rfloor }(T_{k+1}-T_k) + \frac1{N}\,.
\]
For $\eps>0$,
Kolmogorov's maximal inequality implies that
\[
\P\biggl(\frac1N\sup_{0\le k \le \lfloor NT \rfloor }|T_k-k| \ge \eps\biggr)
\le
\frac{1}{\eps^2 N^2}
\sum_{i=1}^{\lfloor NT \rfloor}\text{var}(T_{i}-T_{i-1})
=\frac{\lfloor NT \rfloor}{\eps^2 N^2}\,,
\]
and classically
\[
\P\biggl(\frac1N\sup_{0\le k \le \lfloor NT \rfloor}(T_{k+1}-T_k)\ge \eps\biggr)
=
1-(1-\mathrm{e}^{-N\eps})^{\lfloor NT \rfloor+1}
\le (\lfloor NT \rfloor+1)\mathrm{e}^{-N\eps}\,.
\]
Hence, for all $\delta>0$,
\[
\lim_{N\to \infty }\P\biggl(\sup_{0\le t \le T}|\lambda_N(t)-t| \ge \delta\biggr)
=0\,,
\]
from which the result  follows.

\subsection{Proof of \thref{theo-chaotic-resc}}

Result~\emph{(\ref{chaotic-resc1})} follows from the previous convergence in probability result
and \thref{theo-chaotic-aux},
using either the uniform continuity of the test functions (for the atomic metric)
or Corollary~3.3.3 in Ethier-Kurtz\cite{ethier1986markov} (for the usual metric).
Result~\emph{(\ref{chaotic-resc2})}, which involves Polish spaces, follows
as for \thref{theo-chaotic-aux}.

%
%

\subsection{Proof of \pref{pro-stdevbd}}
For $0 < b$ and $t\in[0,b]$ define $u(t):=\sigma^2(b-t)-\sigma^2(b)$. Note that $\mu_1(t)$ is a constant thus $u(t)=\mu_2(b-t)-\mu_2(b)$.
By the alternative definition of Problem 1
 \bearn
 u(t)&=&-\int_{b-t}^b\int_{[0,1]^2}
  \lb
    (wx+ (1-w)y)^2+ (wy+ (1-w)x)^2 - x^2 -y^2
  \rb
  \\
  &&
  \ind{\abs{x-y}\leq \Delta}
  m(s)(dx)m(s)(dy)
 ds
 \eearn
 By \pref{lem-convex}, the bracket is nonpositive, and the indicator function is upper bounded by $1$ thus
 \bearn
 u(t) &\leq &-\int_{b-t}^b\int_{[0,1]^2}
  \lb
    (wx+ (1-w)y)^2+ (wy+ (1-w)x)^2 - x^2 -y^2
  \rb\\
  &&
  m(s)(dx)m(s)(dy)
 ds\\
 & = & K \int_{b-t}^t \sigma^2(s) ds
 = K\lp \sigma^2(b) + \int_{0}^t u(s) ds\rp
 \eearn
 with $K=4 w(1-w)$. By Gr\"onwall's lemma:
 \bearn
 u(t) &\leq& K \sigma^2(b) t + K^2 \sigma^2(b) e^{Kt}\int_0^ts e^{-Ks}ds
   =  \sigma^2(b) \lp e^{Kt}-1\rp
 \eearn
Let $t=b$ and the proposition follows.
\subsection{Proof of \pref{pro-supp}}
 Fix some $t_0 \geq 0$; we will show that $\textnormal{ess\,inf}(m(t))\geq \textnormal{ess\,inf}(m(t_0))$ for every $t \geq t_0$. Clearly, it is sufficient to consider the case
 $\textnormal{ess\,inf}(m(t_0))> 0$. Take some arbitrary $a < \textnormal{ess\,inf}(m(t_0))$. Let $h(x)=\ind{x \leq a}$ and $\varphi(t)=\cro{h,m(t)}$. We have $\varphi(t_0)=0$ and, by definition of Problem~1:
\bearn
  \varphi(t) &\leq& 2 \int_{t_0}^t\cro{\abs{h(wx+(1-w)y)-h(x)},m(s)(dx)m(s)(dy)}ds
\eearn
Note that $\abs{h(wx+(1-w)y)-h(x)}\leq 1$ and that $h(wx+(1-w)y)-h(x)\neq 0$ requires either $x\leq a,y>a$ or $x>a, y \leq a$. Thus
 \bearn
  \varphi(t) &\leq& 2 \int_{t_0}^t 2 \varphi(s)(1-\varphi(s))ds \leq 4 \int_{t_0}^t \varphi(s)ds
\eearn
By Gr\"onwall's lemma, this shows that $\varphi(t)= 0$
for $t\geq t_0$. Thus $m(t)[0,a]= 0$ for all $t\geq t_0$ and
this is true for any $a<\textnormal{ess\,inf}(m(t_0))$ thus
$\textnormal{ess\,inf}(m(t)) \geq
\textnormal{ess\,inf}(m(t_0))$. This shows
$\textnormal{ess\,inf}(m(t))$ is non decreasing. The proof is
similar by analogy for the $\textnormal{ess\,sup}$.

\subsection{Proof of \thref{theo-cv-pc}}

1. We show that $m(t)$ converges to some probability
$m(\infty)$. This follows from \pref{lem-convex} applied for
example to the family of functions $h_{\omega}: x\to e^{-\omega
x}$ indexed by $\omega \in [0,\infty)$. For any fixed $\omega$,
$\cro{h_{\omega},m(t)}$ is a nondecreasing function of $t$ and
is non-negative, thus converges as $t\to \infty$. The limit is a
probability (apply convergence to the constant equal to $1$).

\noindent 2. We would like to conclude that $m(\infty)$ is a
stationary point, i.e. $\cro{\calA(m(\infty))h,m(\infty)}=0$
for any $h \in L^{\infty}[0,1]$, however there is a technical
difficulty since the definition of $\calA$ involves the
discontinuous function $\ind{\abs{x-y}\leq \Delta}$. We circumvent
the difficulty as follows. For $\eps>0$ and smaller than
$\Delta$, let $\ell_{\eps}(x)$ be the continuous function of $x
\in \Reals^+$ equal to $1$ for $x \leq \Delta-\eps$, $0$ for $x
\geq \Delta$, and the linear interpolation in-between. We have
$\ind{x \leq \Delta-\eps}\leq \ell_{\eps}(x)\leq \ind{x \leq
\Delta}$ for all $x \geq 0$. Let $h(x)=x^2$. By the alternative
definition of Problem~1, for $t$ and $u\geq 0$:
 \bearn
 \lefteqn{\cro{h,m(t+u)}-\cro{h,m(t)} }\\& \leq & - 2 w(1-w)
  \int_{t}^{t+u}
    \cro{(x-y)^2
     \ell_{\eps}(\abs{x-y})
    ,
    m(s)(dx)m(s)dy
    }ds
 \eearn
 Fix $u\geq 0$ and let $t \to \infty$. By weak convergence of
 the product measure $m(t)\otimes m(t)$ it follows that
 \bearn
 0 & \leq &- 2 w(1-w) u\cro{(x-y)^2
     \ell_{\eps}(\abs{x-y})
    ,
    m(\infty)(dx)m(\infty)dy
    }
 \eearn
and thus $
 \cro{(x-y)^2
     \ell_{\eps}(\abs{x-y})
    ,
    m(\infty)(dx)m(\infty)dy
    }=  0
 $ from where we conclude that
 \be
  \cro{(x-y)^2
     \ind{\abs{x-y}\leq \Delta-\eps}
    ,
    m(\infty)(dx)m(\infty)dy
    }=  0 \label{eq-pr-kjs}
 \ee for all $\eps \in (0, \Delta)$.

 \noindent 3. Fix some $\eps >0$ and  integrate the previous equation with respect to $y$;
  it comes that
 $\cro{r(x),m(\infty)(dx)}=0$ with
 $r(x)\eqdef\cro{(y-x)^2\ind{\abs{y-x}\leq \Delta-\eps},m(\infty)(dy)}$,
 thus there is a set $\Omega_1\subset [0,1]$ with $m(\infty)(\Omega_1)=1$
 and $r(x)=0$ for every $x \in \Omega_1$. Let $x_1$ be an
 element of $\Omega_1$ (which is not empty since $m(\infty)(\Omega_1)=1$).
 Then $r(x_1)=0$ and thus $m(\infty)\lp\lb(x_1 -\Delta+\eps,x_1) \cup (x_1,
 x_1 + \Delta-\eps)\rb
 \cap [0,1] \rp=0$ and the restriction of $m(\infty)$ to
 $(x_1-\Delta+\eps, x_1+\Delta-\eps)\cap [0,1]$ is a dirac mass at $x_1$.
 Apply the same reasoning to the complement of $(x_1-\Delta+\eps,
 x_1+\Delta-\eps)$, this shows recursively that $m(\infty)$ is a
 finite sum of Dirac masses, i.e.
 $m(\infty)=\sum_{i=1}^I \alpha_i \delta_{x_i}$
 for some $I \in \Nats$, $\alpha_i >0$, $\sum_{i=1}^I\alpha_i=1$ and $x_i \in [0,1]$.

Assume that $\abs{x_i-x_j}<\Delta$ for some $i\neq j$. Apply
\eref{eq-pr-kjs} with $\eps=\frac{\Delta-\abs{x_i-x_j}}{2}$.
The right-handside of \eref{eq-pr-kjs} is lower bounded by
$\alpha_i \alpha_j (x_i-x_j)^2>0$, which is a contradiction.
Therefore $\abs{x_i-x_j}\geq\Delta$ for all $i\neq j$.

\subsection{Proof of \pref{prop-qdec}}
First we show that if $\nu\in P_{n+1}(\mu_0)$
then there exists some $\nu'\in P_{n}(\mu_0)$
with $\cro{h,\nu'} \leq \cro{h,\nu}$, which will
clearly show the proposition.

We are given $\nu=\sum_{i=1}^{n+1}\alpha_i
\delta_{x_i} \in P_{n+1}(\mu_0)$. Let
$x'_n=\frac{\alpha_{n}x_n +\alpha_{n+1}x_{n+1}
}{\alpha_n+\alpha_{n+1}}$ and
 \ben
   \nu'=
   \sum_{i=1}^{n-1}\alpha_i \delta_{x_i} +
   \lp
     \lp \alpha_n+\alpha_{n+1}\rp \delta_{x'_n}
   \rp
 \een
We have $\nu'\in P_n(\mu)$ and by convexity of
$h$:
 \ben
  \lp \alpha_n+\alpha_{n+1}\rp h(x'_n) \leq
  \alpha_n h(x_n) + \alpha_{n+1}h(x_{n+1})
 \een thus $\cro{h,\nu'} \leq \cro{h,\nu}$ as
 required.

\subsection{Proof of \thref{theo-ccbq}}
By hypothesis $\cro{h,m_0}\leq q$ and since $h$ is continuous,
by \thref{theo-cv-pc}, $\cro{h,m(\infty)}\leq q$. Since the
mean of $m(\infty)$ is also $\mu_0$ (again by
\thref{theo-cv-pc} applied to $h(x)=x$), it follows that $q$ is
not in $Q_{d}(h,\mu_0)$. Together with the hypothesis $q \in
Q_{n}(h,\mu_0)$, \pref{prop-qdec} implies that $c<n$.

\subsection{Proof of \pref{prop-invalsym}}
Let $m'(t)$ be the image measure of $m(t)$ by $x \mapsto 1-x$.
By direct computation and the alternative form of Problem~1, it
follows that $m'(t)$ is solution to Problem~1 with initial
condition $m'(0)=m(0)$. By uniqueness, $m'(t)=m(t)$.

\subsection{Proof of \pref{prop-qdec-s}}
Let $\nu$ be a symmetric partial consensus with $n+1$
components. We do as in the proof of \pref{prop-qdec}: If $n+1$
is even, we replace the two middle components by their weighted
averages. If $n+1$ is odd, we replace the three middle
components $x_{m-1}, x_m=0.5, x_{m+1}$ (with $m=n/2 +1$) by two
components $(\alpha_{m-1}x_{m-1}+0.5 \alpha_m
x_m)/(\alpha_{m-1}+0.5\alpha_{m})$
   and
   $(0.5\alpha_{m}x_{m}+
\alpha_{m+1} x_{m+1})/(0.5\alpha_{m}+ \alpha_{m+1})$ with
weights $\alpha_{m-1}+0.5\alpha_{m}$ and $0.5\alpha_{m}+
\alpha_{m+1}$.  We obtain some $\nu' \in SP_{n}$ and
$\cro{h,\nu'}\leq \cro{h,\nu}$ for any convex $h$, thus if $q
\in SQ_n(h)$ we must also have $q \in SQ_{n+1}(h)$.

\subsection{Proof of \thref{theo-ccbq-s}}
The proof is similar to \thref{theo-ccbq}.
%

\subsection{Proof of \thref{theo-pdf}}

Assuming that $m_0$ is absolutely continuous, the fact that $m(t)$
is absolutely continuous can be proved
by probabilistic arguments which use representations by inhomogeneous
Markov processes with uniformly bounded jump rates.

More precisely, the proof of Theorem 2.1 in
Desvillettes \emph{et al}.,\cite{desvillettes1999probabilistic}
for a class of equations (the generalized cutoff Kac equation)
with the same probabilistic structure as ours,
extends immediately to the present
situation. It is an extension of \thref{theo-eunl} proved using only its hypotheses.

If $m=(m(t), {t\in\Reals_+})$ is a solution of Problem 1 and
$m(t)(dx) = f(x,t)\,dx$ then, for any bounded $h$,
an elementary change of variables yields
\begin{align*}
&
\int h(x) f(x,t)\,dx
- \int h(x) f(x,0)\,dx
\\
&\qquad =
2\int_0^t \iint h(w x + (1-w) y) \ind{|x-y|\le\Delta}
f(x,s) f(y,s)
\,dxdy\,ds
\\
&\qquad\qquad
- 2\int_0^t \iint h(x) \ind{|x-y|\le\Delta}
f(x,s) f(y,s)
\,dxdy\,ds
\\
&\qquad =
\frac{2}{w}\int_0^t \int h(x')
\biggl[\int_{x'-\Delta w}^{x'+\Delta w}
f\biggl(\frac{x'-(1-w)y}{w},s\biggr) f(y,s)
\,dy\biggr]\,dx' \,ds
\\
&\qquad\qquad
- 2\int_0^t \int h(x)f(x,s) \biggl[\int_{x-\Delta}^{x+\Delta} f(y,s)\,dy\biggr]\,dx\,ds
\end{align*}
from which \eqref{eq-pdf} readily follows.

The converse statement follows by integrating \eref{eq-pdf} by $h(x)\,dx$,
which after the reverse change of variables yields Problem 1 as a weak formulation.

\eref{eq-pdf-bis} is obtained similarly using the change of
variables $x'=\frac{w x - (1-w)y}{2w-1}$ and $y'=\frac{w y -
(1-w)x}{2w-1}$.

\subsection{Proof of \pref{pro-fbounded}}

Since $f(x,t)$ is non-negative, we have:

$$\frac{\partial f(x,t)}{\partial t}
\leq \frac{2}{w}\int_{x-w\Delta}^{x+w\Delta}f(y,t)f\left(\frac{x-(1-w)y}{w},t\right)dy.$$

For a fixed arbitrary $t$, let $A_i = \{x \in \text{ Supp}(f(x,t)) | i-1 < f(x,t) \leq i\}, \; i > 0$ be the level sets. Note that $A_j = \emptyset$ for all $j > \lceil M(t) \rceil$ and that the $A_i$ are disjoint. For any $x$, we have that:
$$ \frac{2}{w}\int_{x-w\Delta}^{x+w\Delta}f(y,t)f\left(\frac{x-(1-w)y}{w},t\right)dy $$
$$\leq \frac{2}{w}\sum_{i,j} \mu\left(\left\{y\left|y \in A_i, \frac{x - (1-w)y}{w}\in A_j\right.\right\}\right)\max{\{i,j\}^2}.$$

Using the fact that the $A_i$ are disjoint we can get that:

\bearn \lefteqn{\frac{2}{w}\sum_{i,j} \mu\left(\left\{y\left|y
\in A_i, \frac{x - (1-w)y}{w}\in
A_j\right.\right\}\right)\max{\{i,j\}^2}}
 \\
& =& \frac{2}{w}\sum_{i} \mu\left(\left\{y\left|y \in A_i,
\frac{x - (1-w)y}{w}\in \bigcup_{k \leq i}
A_k\right.\right\}\right)i^2
 \\
 &&+  \frac{2}{w}\sum_{i} \mu\left(\left\{y\left|y \in
\bigcup_{k < i} A_k, \frac{x - (1-w)y}{w}\in
A_i\right.\right\}\right)i^2 = I_1 + I_2. \eearn
We can bound $I_1$ and $I_2$ now as:
$$ I_1 \leq \frac{2}{w}\sum_{i}\mu(A_i) i^2, \qquad  I_2 \leq \frac{2}{1-w}\sum_{i}\mu(A_i) i^2,$$
subject to the following restrictions:
$$ \sum_{i}\mu(A_i) \leq 1, \qquad \sum_{i}(i-1)\mu(A_i) \leq \int_{0}^{1}f(x,t)dx = 1.$$
Plugging the second restriction into the bound of $I_1$ and $I_2$, we get that:
$$ \sum_{i=1}^{\lceil M(t) \rceil}\mu(A_i) i^2 \leq \frac{\lceil M(t) \rceil ^2}{\lceil M(t) \rceil-1} + \sum_{i=1}^{\lceil M(t) \rceil-1}\mu(A_i)\left(i^2 - \frac{\lceil M(t) \rceil ^2}{\lceil M(t) \rceil-1}(i-1)\right)$$
$$
= \frac{\lceil M(t) \rceil ^2}{\lceil M(t) \rceil-1} + \frac{1}{\lceil M(t) \rceil-1}\sum_{i=1}^{\lceil M(t) \rceil-1}\mu(A_i)\left(\lceil M(t) \rceil i - \lceil M(t) \rceil - i) (i-\lceil M(t) \rceil\right).$$
The maximum of the RHS is attained when $\mu(A_i) = 0 \quad \forall \;i > 1$ and $\mu(A_1)$ is as big as possible. By the first restriction, $\mu(A_1) = 1$. In that case, we have that:
$$ \sum_{i=1}^{\lceil M(t) \rceil}\mu(A_i) i^2 \leq
\frac{\lceil M(t) \rceil ^2}{\lceil M(t) \rceil-1} + 1
\leq \lceil M(t) \rceil + 3 \leq M(t) + 4.$$
Therefore:
$$ \sup_{A_i}\left\{\sum_{i}\mu(A_i) i^2\right\} \leq M(t) + 4.$$
Finally, for any $x$ we have:
$$ \frac{2}{w}\int_{x-w\Delta}^{x+w\Delta}f(y,t)f\left(\frac{x-(1-w)y}{w},t\right)dy \leq I_1 + I_2 \leq \left(\frac{2}{w} + \frac{2}{1-w}\right)(M(t) + 4), $$
which means that:
$$ M'(t) \leq \left(\frac{2}{w} + \frac{2}{1-w}\right)(M(t) + 4).$$
Integrating, we get the result.
%



\subsection{Proof of \pref{pro-absfbounded}}

Again, since $f(x,t)$ is non-negative, for all $x$,
 \bearn
 \lefteqn{
\left|\frac{\partial}{\partial t}f(x,t)\right|}\\
 &\leq&
\max{\left\{\frac{2}{w}\int_{x-w\Delta}^{x+w\Delta}f(y,t)f\left(\frac{x-(1-w)y}{w},t\right)dy,\;2f(x,t)\left(\int_{x-\Delta}^{x+\Delta}f(y,t)dy\right)\right\}}.
 \eearn
On the one hand,
$$ 2f(x,t)\left(\int_{x-\Delta}^{x+\Delta}f(y,t)dy\right) \leq 2M(t)\int_{0}^{1}f(y,t)dy \leq 2M(t),$$
on the other, using \pref{pro-fbounded},
$$ \frac{2}{w}\int_{x-w\Delta}^{x+w\Delta}f(y,t)f\left(\frac{x-(1-w)y}{w},t\right)dy \leq \left(\frac{2}{w} + \frac{2}{1-w}\right)(M(t) + 4), $$
therefore
$$ \left|\frac{\partial}{\partial t}f(\cdot,t)\right|_{\infty} \leq \left(\frac{2}{w} + \frac{2}{1-w}\right)(M(t) + 4).$$

%
%
%
%

\subsection{Proof of \pref{prop-num-integral}}
Let $f^r(x,t)$ be defined piecewise in the intervals $X_i = [x_i,x_{i+1}]$ and let $M_{i}$ be 
constant chosen for the piecewise constant approximation on the interval $X_{i}$.
We have that, independently of $t$:
$$ \int_{0}^{1}f^r(x,t)dx = \int_{0}^{1} \sum_{i=1}^{I}M_{i}1_{X_i}dx
= \sum_{i=1}^{I}\int_{X_i}\frac{\displaystyle \int_{x_i}^{x_{i+1}}f^e(y,t)dy}{x_{i+1} - x_i}dx
= \int_{0}^{1}f^e(y,t)dy.$$
%
\subsection{Proof of \pref{prop-err_ecs}}

Keeping in mind that for any interval, the slope of $f^e(x,k\Delta t)$ is bounded by $\frac{2\Delta t |\partial_t f^r(x,(k-1)\Delta t)|_{\infty}}{1/I}$, yielding:
\begin{equation}
\label{eq-ecs}
\varepsilon_{c.s.}(I) \leq I\frac{\text{Max. Slope}}{2}\left(\frac{1}{I}\right)^2 = \Delta t|\partial_t f^r(x,(k-1)\Delta t)|_{\infty}.
\end{equation}
%
%
%
On the other hand:
$$ M(\Delta t) = |f^r(x,\Delta t)|_{\infty} \leq |f^e(x,\Delta t)|_{\infty} \leq |f^r(x,0)|_{\infty} + \Delta t |\partial_t f^r(x,0)|_{\infty} $$
$$\leq M + \Delta t K_1 M + \Delta t K_2 = (1 + \Delta t K_1)M + \Delta t K_2,$$
where $K_1 = \frac{2}{w} + \frac{2}{1-w}, K_2 = \frac{8}{w} +
\frac{8}{1-w}$. The first inequality is true because when we
approximate by piecewise constant splines, the maximum of the
function decreases and the third is true by
\pref{pro-fbounded}. Note that in order to be able to apply it
we are implicitly using \pref{prop-num-integral} as the total
mass is conserved. By induction:
$$ M\left(\frac{T}{\Delta t}\Delta t\right) \leq (1 + \Delta t K_1)^{\frac{T}{\Delta t}}M + \Delta t K_2 \sum_{i=0}^{T/\Delta t-1}(1+\Delta t K_1)^i$$
$$ = (1+\Delta t K_1)^{\frac{T}{\Delta t}}M + \frac{K_2}{K_1}((1+\Delta t K_1)^{\frac{T}{\Delta t}} - 1) \leq \frac{K_2}{K_1}(1+\Delta t K_1)^{\frac{T}{\Delta t}}\left(M + \frac{K_2}{K_1}\right).$$
We can now bound $M(k\Delta t)$ in the following way. As $K_1$
and $K_2$ are positive, taking into account that $(1+K_1 \Delta
t)^{\frac{T}{\Delta t}}$ is decreasing with $\Delta t$, we have
for any $k$:
$$ M(k\Delta t) \leq (1 + \Delta t K_1)^{\frac{T}{\Delta t}}M + \frac{K_2}{K_1}(1+\Delta t K_1)^{\frac{T}{\Delta t}} \leq
e^{K_1 T}\left(M + \frac{K_2}{K_1}\right).$$
Using \pref{pro-absfbounded}:
$$ |\partial_t f^r(x,(k-1)\Delta t)|_{\infty} \leq K_1 M((k-1)\Delta t) + K_2 \leq K_1 e^{K_1 T}\left(M + \frac{K_2}{K_1}\right) + K_2 \equiv c.$$
Combining this equation with equation \eqref{eq-ecs} we get the desired result.
\subsection{Proof of \pref{prop-error-euler}}
We have that:
\bearn \varepsilon_{eu} &= &|\nu_{e}^{k \Delta t}(dx) -
\mu_{(k-1)\Delta t}^{k\Delta t}(dx)|_{T}
 \\
& = & \int_{0}^{1}|g^{(k-1)\Delta t}(x,k\Delta t) -
g^{(k-1)\Delta t}(x,(k-1)\Delta t) - \Delta t \partial_t
g^{(k-1)\Delta t}(x,(k-1)\Delta t)|dx
 \\
 & \leq &\frac{1}{2}(\Delta t)^2
|\partial_{tt}^{2}g^{(k-1)\Delta t}(x,(k-1)\Delta t)|_{\infty}
+ O\left((\Delta t)^3\right).
 \eearn
By \coref{cor-derivatives_bounded}, we can bound, for any $k$:
$$ |\partial_{tt}^{2}g^{(k-1)\Delta t}(x,(k-1)\Delta t)|_{\infty} \leq 16\Delta
|\partial_{t}g^{(k-1)\Delta t}(x,(k-1)\Delta t)|_{\infty}
|g^{(k-1)\Delta t}(x,(k-1)\Delta t)|_{\infty}$$
$$
\leq 16\Delta \left(K_1 e^{K_1 T}\left(M + \frac{K_2}{K_1}\right) + K_2\right)e^{K_1 T}\left(M + \frac{K_2}{K_1}\right) = C_2,$$
therefore:
\begin{equation}
 \varepsilon_{eu} \leq \frac{C_2}{2}(\Delta t)^2 + O((\Delta t)^3) = O((\Delta t)^2).
\end{equation}
\subsection{Proof of \pref{prop-error-continuation}}
\bearn
 \lefteqn{\frac{\partial}{\partial t}\int_{0}^{1}|g^{k\Delta
t}(x,t) - g^{(k-1)\Delta t}(x,t)|dx  \leq
\int_{0}^{1}|\partial_t g^{k\Delta t}(x,t) - \partial_t
g^{(k-1)\Delta t}(x,t)|}
 \\
&\leq& \int_{0}^{1}2\left|-g^{k\Delta
t}(x,t)\int_{x-\Delta}^{x+\Delta}g^{k\Delta t}(y,t)dy +
g^{(k-1)\Delta t}(x,t)\int_{x-\Delta}^{x+\Delta}g^{(k-1)\Delta
t}(y,t)dy\right|
  \\
&&+
\int_{0}^{1}\frac{2}{w}\left|\int_{x-w\Delta}^{x+w\Delta}g^{k\Delta
t}(y,t)g^{k\Delta t}\left(\frac{x-(1-w)y}{w},t\right)dy\right.
 \\
 &&
- \left.\int_{x-w\Delta}^{x+w\Delta}g^{(k-1)\Delta
t}(y,t)g^{(k-1)\Delta
t}\left(\frac{x-(1-w)y}{w},t\right)dy\right| = I + J.
 \eearn
We will first bound $I$. We have that:
$$I \leq 2\int_{0}^{1}|g^{(k-1)\Delta t}(x,t)-g^{k\Delta t}(x,t)|\int_{x-\Delta}^{x+\Delta}g^{(k-1)\Delta t}(y,t)dzdx $$
$$+ 2\int_{0}^{1}g^{k\Delta t}(x,t)\int_{x-\Delta}^{x+\Delta}|g^{(k-1)\Delta t}(y,t)-g^{k\Delta t}(y,t)|dzdx = I_1 + I_2.$$
On the one hand:
$$ I_1 \leq 2\int_{0}^{1}|g^{(k-1)\Delta t}(x,t)-g^{k\Delta t}(x,t)|dx,$$
on the other:
 \begin{align*}
 I_2 & \leq 2\int_{0}^{1}g^{k\Delta t}(x,t)\int_{0}^{1}|g^{(k-1)\Delta t}(y,t)-g^{k\Delta t}(y,t)|dzdx \\
 & \leq 2\int_{0}^{1}|g^{(k-1)\Delta t}(x,t)-g^{k\Delta t}(x,t)|dx.
 \end{align*}
Now we will bound $J$:
 \begin{align*}
 J & \leq \frac{2}{w}\int_{0}^{1}\int_{x-w\Delta}^{x+w\Delta}g^{k\Delta t}(y,t) \\
 & \times \left|g^{k\Delta t}\left(\frac{x-(1-w)y}{w},t\right)-g^{(k-1)\Delta t}\left(\frac{x-(1-w)y}{w},t\right)\right|dzdx \\
 & + \frac{2}{w}\int_{0}^{1}\int_{x-w\Delta}^{x+w\Delta} \left|g^{k\Delta t}(y,t) - g^{(k-1)\Delta t}(y,t)\right| \\
 & \times g^{(k-1)\Delta t}\left(\frac{x-(1-w)y}{w},t\right)dzdx = J_1 + J_2
 \end{align*}
$$ J_1 = 2\int_{0}^{1}\int_{x-\Delta}^{x+\Delta}g^{k\Delta t}(x,t)\left|g^{k\Delta t}\left(y,t\right)-g^{(k-1)\Delta t}\left(y,t\right)\right|dzdx $$
$$\leq 2\int_{0}^{1}|g^{k\Delta t}(x,t)-g^{(k-1)\Delta t}(x,t)|dx$$
$$ J_2 = 2\int_{0}^{1}\int_{x-\Delta}^{x+\Delta}\left|g^{k\Delta t}(x,t) - g^{(k-1)\Delta t}(x,t)\right|g^{(k-1)\Delta t}\left(y,t\right)dzdx $$
$$\leq 2\int_{0}^{1}|g^{k\Delta t}(x,t)-g^{(k-1)\Delta t}(x,t)|dx.$$
Adding all the equations together we get that:
$$ \frac{\partial}{\partial t}\int_{0}^{1}|g^{k\Delta t}(x,t) - g^{(k-1)\Delta t}(x,t)|dx \leq I + J \leq I_1 + I_2 + J_1 + J_2 $$
$$ \leq 8\int_{0}^{1}|g^{k\Delta t}(x,t)-g^{(k-1)\Delta t}(x,t)|dx.$$

Integrating:
$$ \left|\mu_{k\Delta t}^{t}(dx) - \mu_{(k-1)\Delta t}^{t}(dx)\right|_{T} = \int_{0}^{1}|g^{k\Delta t}(x,t) - g^{(k-1)\Delta t}(x,t)|dx $$
$$\leq e^{8(t-k\Delta t)}\int_{0}^{1}|g^{k\Delta t}(x,k\Delta t) - g^{(k-1)\Delta t}(x,k\Delta t)|dx $$
$$ = e^{8(t-k\Delta t)}|\mu_{k\Delta t}^{k \Delta t}(dx) - \mu_{(k-1)\Delta t}^{k\Delta t}(dx)|_{T},$$
as we wanted to prove.

\subsection{Proof of \thref{theo-order-method}}
\bearn
 \varepsilon_{tot} &\leq&
\sum_{k=1}^{T/(\Delta t)}\left|\mu_{(k-1)\Delta t}^{T}(dx) - \mu_{k\Delta t}^{T}(dx)\right|_{T}
\leq
e^{8T} \sum_{k=1}^{T/(\Delta t)}\left|\mu_{(k-1)\Delta t}^{k\Delta t}(dx)
 - \mu_{k\Delta t}^{k\Delta t}(dx)\right|_{T}
\\
&=& e^{8T}\frac{T}{\Delta t}\left(c \Delta t + O\left((\Delta t)^2\right)\right) = C + O\left(\Delta t\right).
\eearn
%

\bibliographystyle{ws-acs}
\bibliography{reputation}

\end{document}